\newcommand{\1}{\mathds 1}
\newcommand{\F}{\mathscr F}
\newcommand{\bN}{\mathbb{N}}
\newcommand{\R}{{\mathbb{R}}}
\newcommand{\vep}{{\varepsilon}}
\newcommand{\lra}{\longrightarrow}
\newcommand{\lmt}{\longmapsto}
\newcommand{\ms}{\mathscr}
\renewcommand{\P}{{\mathbb P}}
\newcommand{\Pn}{\mathbb {P}^{(n)}}
\newcommand{\E}{{\mathbb E}}
\newcommand{\En}{\mathbb{E}^{(n)}}
\newcommand{\dxi}{{\widehat{\xi}}}
\newcommand{\hxi}{{\widehat{\xi}}}
\newcommand{\Q}{{\mathbb{Q}}}
\newcommand{\bQ}{{\mathbb{Q}}}
\newcommand{\pid}{\nu(\1)}
\newcommand{\piD}{\pi_{\rm diag}}
\newcommand{\piDn}{\pi^{(n)}_{\rm diag}}
\newcommand{\pidn}{\nu_n(\1)}
\newcommand{\gap}{\mathbf g}
\newcommand{\convdn}{\xrightarrow[n\to\infty]{{\rm (d)}}}
\newcommand{\pinmax}{\pi_{\rm max}^{(n)}}
\newcommand{\bpi}{\bar{\nu}}
\newcommand{\bZ}{\mathbb{Z}}
\newcommand{\tmix}{\mathbf t_{\rm mix}}
\newcommand{\tmixn}{\mathbf t_{\rm mix}^{(n)}}
\newcommand{\tmeet}{\mathbf t_{\rm meet}}
\newcommand{\tmeetn}{\mathbf t_{\rm meet}^{(n)}}
\newcommand{\cnkj}{ {{\sf C}^{(n)}_{k,j}}}
\newcommand{\cL}{\mathcal{L}}
\newcommand{\ds}{\displaystyle}
\begin{document}

\newtheoremstyle{slantthm}{10pt}{10pt}{\slshape}{}{\bfseries}{}{.5em}{\thmname{#1}\thmnumber{ #2}\thmnote{ (#3)}.}
\newtheoremstyle{slantrmk}{10pt}{10pt}{\rmfamily}{}{\bfseries}{}{.5em}{\thmname{#1}\thmnumber{ #2}\thmnote{ (#3)}.}

\newtheorem{thm}{Theorem}[section]
\newtheorem{prop}[thm]{Proposition}
\newtheorem{lem}[thm]{Lemma}
\newtheorem{cor}[thm]{Corollary}
\newtheorem{defi}[thm]{Definition}
\newtheorem{prob}[thm]{Problem}
\newtheorem{disc}[thm]{Discussion}
\newtheorem*{nota}{Notation}
\newtheorem*{conj}{Conjecture}
\theoremstyle{definition}
\newtheorem{rmk}[thm]{Remark}
\theoremstyle{slantrmk}
\newtheorem{eg}[thm]{Example}

\numberwithin{equation}{section}

\title{\bf On the Convergence of Densities of Finite Voter Models to the Wright-Fisher Diffusion}
\author{Yu-Ting\ Chen}
\address{Centre de Recherches Math\'ematiques and Universit\'e de Montr\'eal}
\thanks{Research of the first author was supported in part by the UBC Four Year Doctoral Fellowship and the CRM-ISM Postdoctoral Fellowship.} 

\author{Jihyeok\ Choi}
\address{Department of Mathematics\\
Syracuse University}
\thanks{Research of the second author was supported in part by a grant from the National Science Foundation.} 

\author{J. Theodore\ Cox}
\address{Department of Mathematics\\
Syracuse University}
\thanks{Research of the third author was supported in part by grants from the National Science Foundation and the Simons Foundation.}

\subjclass[2000]{Primary: 60K35, 82C22, Secondary: 60F05, 60J60}
\keywords{Wright-Fisher diffusion, voter model,
  interacting particle system,
  dual processes, semimartingale convergence theorem}

\begin{abstract}
We study voter models defined on large sets. 
Through a perspective emphasizing the martingale property of voter density processes,
we prove that in general, their convergence to the Wright-Fisher diffusion only involves certain averages of the voter models over a small number of spatial locations. This enables us to identify
suitable mixing conditions on the
underlying voting kernels, one of which may just depend on their eigenvalues in some contexts, to obtain the convergence of density processes.
Our examples show that these conditions are satisfied by a large class of voter models on growing finite graphs.
\end{abstract}

\maketitle
\markboth{CONVERGENCE OF VOTER DENSITIES TO WRIGHT-FISHER DIFFUSION}{YU-TING\ CHEN, JIHYEOK\ CHOI, AND J. THEODORE\ COX}

\section{Introduction}\label{sec:intro}
The goal of this work is to investigate
the convergence of density processes in finite voter models to the Wright-Fisher diffusion. This convergence gives a mean-field approximation for voter models, and 
is also closely related to the mean-field approximation of coalescence times for the associated dual Markov chains (cf. the recent work of Oliveira
\cite{O:MFC} and \cite{O:CT}). 
Earlier examples for
such convergence of density processes are few and include
the traditional mean-field models
and the voter models on $d$-dimensional tori for
$d\geq 2$ (cf. Cox \cite{C:CRW89}). 
In the present work,
we give mixing conditions on the underlying voting kernels which hold for a large class of finite voter models, and in particular generalize the earlier results.

We first introduce the class of voter models considered
throughout this paper.  (See Chapter~V of \cite{L:IPS} or Section 4.3 of \cite{L:CMP} for a general account of voter models.) Recall that for a finite set $E$, a $Q$-matrix $q$ is indexed by $x,y\in E$ and satisfies
\begin{align}\label{def:q}
q(x,y)\geq 0\;\;\quad\forall\; x\neq y\;\quad\mbox{and}\quad q(x)\equiv -q(x,x)=\sum_{y:y\neq x}q(x,y)
\end{align}
(see Chapter 2 of \cite{L:CMP}).
For such a pair $(q,E)$ with $q$ irreducible,
the associated continuous-time voter
model $(\xi_s)$ is the $\{0,1\}^E$-valued
Markov chain evolving according to the following rule. 
At independent exponential random times, the ``voter'' at site $x$ replaces its ``opinion'', which is $0$ or $1$, with that of another site chosen 
independently according to $q(x,\, \cdot\,)$ on $E\setminus\{x\}$.
More precisely, the voter model $(\xi_s)$ is the
pure-jump Markov process on $\{0,1\}^E$ with generator
\begin{align}\label{eq:gen}
\mathcal Lf(\xi)\equiv \sum_{x\in
  E}c(\xi,\xi^x)\big(f(\xi^x)-f(\xi)\big).
\end{align}
Here, for any configuration $\xi$, $\xi^x$ is obtained by switching the opinion of $\xi$ at $x$ to the opposite one and differs from $\xi$ only at this site, and the flip rate at which $\xi$ changes to $\xi^x$ is given by
\begin{align}\label{eq:flip}
c(\xi,\xi^x)=\sum_{y\in E}\left[\xi(x)\dxi(y)+\dxi(x)\xi(y)\right]q(x,y),
\end{align}
for $\dxi=1-\xi$. Hence, the $Q$-matrix $q$ can be interpreted as the {\bf voting kernel} of $(\xi_s)$. By allowing $q$ to be a general $Q$-matrix as in (\ref{eq:flip}),
we can consider the case that the total voting rates $q(x)$ (recall (\ref{def:q})) are site-dependent.

We consider in particular the density process
$\big(p_1(\xi_s)\big)$ of such a voter model, where
\begin{align}\label{eq:mc}
p_1(\xi)= \sum_{x\in E}\pi(x)\xi(x)
\end{align}
and $\pi$ is the unique stationary (probability) distribution of the irreducible $q$-Markov chain, that is the Markov chain with semigroup $(e^{tq};t\geq 0)$.
The simplest example arises from the mean-field model in which each $q(x,\,\cdot\,)$ is the uniform distribution on the set $E\setminus \{x\}$, and it is often called
the Moran model in population genetics.
In this
setting, 
$\pi$ is the uniform distribution on $E$, and it is straightforward to apply 
diffusion approximation to the density processes. More precisely, these processes, after
time-changes by suitable constants, converge in distribution in the
Skorokhod space to the Wright-Fisher diffusion as the
``population size'' $|E|$ tends to infinity.  Here, we recall that
the Wright-Fisher diffusion, denoted by 
\[
\left(Y,(\P_u)_{u\in [0,1]}\right) 
\]
throughout this paper, is a Markov process on $[0,1]$ which uniquely solves the well-posed martingale problem for
\begin{align}
\mathcal G\equiv \frac{1}{2}x(1-x)\frac{d^2}{dx^2}\label{eq:FW}
\end{align}
and initial condition $u$ for every $u\in [0,1]$. In particular, the Wright-Fisher diffusion is a continuous martingale with
predictable quadratic variation
\begin{equation}\label{WFPQ}
\langle Y\rangle_t = \int_0^t Y_s(1-Y_s) ds.
\end{equation} 
See Section 10.3 in \cite{EK:MP} for the convergence of these density processes and Chapter 4 in the same reference for martingale problems. 

For more realistic modelling, several
works consider finite voter models where the voting kernels
$q$ are defined by spatial structures, or more precisely by the transition kernels of (simple) random walks on graphs (see Chapter 14 in \cite{AF:MC}, \cite{C:CRW89}, Section 6.9 in
\cite{D:RGD}, \cite{DW}, \cite{O:MFC}, and
\cite{SR}). We note that in theoretical biology, such voter models play an important role in the study of evolutionary dynamics where
the use of general spatial structures for the underlying social networks of biological identities is essential (cf. \cite{OHLN}, \cite{CDP:VMP}, \cite{C:BC} and the references there).
Voter models in these contexts become harder to analyze, but the mean-field case mentioned above may still serve as an important example in 
their studies.

For density processes in spatial voter models, the work \cite{C:CRW89}
obtains a similar diffusion approximation on $d$-dimensional discrete tori for $d\geq 2$. It proves that if the
initial laws for voter models are Bernoulli product measures
with a constant density, then the density processes, again after suitable constant time-changes, converge
to the Wright-Fisher diffusion. We note that the voting kernels defining the
voter models in \cite{C:CRW89} are nearest-neighbor ones
allowing only ``local'' interactions, whereas
interactions in the mean-field case are defined by voters living in ``well-mixed" populations and are very different in nature. Hence, the
fact that the Wright-Fisher diffusion appears as the
diffusion limit in both cases suggests that this type of
diffusion approximation of density processes should occur in
some generality. More specifically, we will focus on the case as in \cite{C:CRW89} that the initial conditions are Bernoulli product measures.

To introduce our perspective on this question, we
restrict our attention to the simple case that 
\begin{align}
q=p-{\rm Id}_E\label{def:simple}
\end{align} 
for some symmetric probability matrix $p$ with zero diagonal
throughout this section.
Here, ${\rm Id}_E$ is the identity matrix indexed by elements of $E$, and such a $Q$-matrix $q$ arises when we consider the usual time-change of a discrete-time Markov chain with transition matrix $p$ by an independent rate-$1$ Poisson process (cf. Section 20.1 of \cite{LPW}). We will give in Section~\ref{sec:MR} our result for general irreducible voting kernels $q$, and more notation is required then. Now, the stationary distribution $\pi$ for a voting kernel $q$ of the form (\ref{def:simple}) is the uniform distribution, and
the density process 
\begin{align}\label{eq:denjump}
\mbox{$\big(p_1(\xi_s)\big)$ is
a martingale with jump size }\frac{1}{|E|}.
\end{align}
By introducing a
constant time-scale factor $\gamma>0$, the density process
has predictable quadratic variation
\begin{equation}
\langle p_1(\xi_{\gamma \cdot})\rangle_t=
\dfrac{2\gamma}{|E|}\int_0^tp_{10}(\xi_{\gamma s}) ds,\label{eq:pqv}
\end{equation}
where 
\begin{equation}\label{eq:sym-p10}
p_{10}(\xi) = \frac{1}{|E|}\sum_{x,y\in
  E}q(x,y)\xi(x)\hxi(y)
\end{equation}
is a weighted average of $(1,0)$ pairs in the configuration $\xi$.
See Proposition~\ref{prop:marts} for these properties of density processes.

This observation should readily reveal the similarity of the density process and the Wright-Fisher diffusion in terms of martingales, under the condition that the population size $|E|$ is large and
the predictable quadratic variation of the density process, a weighted average of $(1,0)$ pairs in $(\xi_{\gamma s})$ by (\ref{eq:pqv}), satisfies
\begin{align}
\langle p_1(\xi_{\gamma \cdot})\rangle_t \approx &
\int_0^tp_{1}(\xi_{\gamma s})[1-p_1(\xi_{\gamma_s})]ds\quad\mbox{as }|E|\lra\infty\label{eq:pqva}
\end{align}
(recall the predictable quadratic variation (\ref{WFPQ}) of the Wright-Fisher diffusion).
The mean-field case gives the simplest example satisfying this condition, since
\begin{align}
p_{10}(\xi)=\frac{|E|}{|E|-1}p_1(\xi)[1-p_1(\xi)],
\end{align}
and hence (\ref{eq:pqva}) holds plainly with $\gamma=|E|/2$. In general,
if we pass $|E|$ to infinity and $p_1(\xi_0)$ converges, then under (\ref{eq:pqva}) the
density processes should converge to a continuous martingale by (\ref{eq:denjump}) which
solves the well-posed martingale problem associated with the differential operator $\mathcal G$ in \eqref{eq:FW}.
In other words, the limiting object should be the Wright-Fisher process, and indeed, standard martingale arguments confirm this. See Section~\ref{sec:proof1} for the details, and also its last two paragraphs for the use of general initial conditions. 

We will formalize the condition (\ref{eq:pqva}) by considering the convergence \emph{in probability} of the differences
\begin{align}\label{eq:mfc0}
\langle p_1(\xi_{\gamma\cdot})\rangle_t-\int_0^tp_1(\xi_{\gamma_s})[1-p_1(\xi_{\gamma s})]ds
\end{align}
for any $t\in(0,\infty)$ and passing to the limit along a sequence of voter models, started with Bernoulli product measures with a constant density and defined by $\left(q^{(n)},E_n\right)_{n\in \Bbb N}$ with $|E_n|\lra\infty$, and a sequence of constant time scales $(\gamma_n)$. Our first main result in this paper shows that such convergence of the differences (\ref{eq:mfc0}) is in fact an \emph{equivalent} condition for the convergence of the voter densities toward the Wright-Fisher diffusion.
See Theorem~\ref{thm:main}.

Let us discuss how the method of moments in \cite{C:CRW89} can be applied to general finite voter models, and compare this method with the method of martingale problems stated above.
In \cite{C:CRW89}, the convergence of densities for
voter models on discrete tori toward the Wright-Fisher diffusion  was obtained by proving
that certain coalescence times of random walks are approximately sums of independent exponential variables
and then appealing to the method of moments via  the well-known duality between voter models and
coalescing Markov chains (see \cite{L:IPS} or (\ref{eq:duality}) below).
In fact, there are several connections between such almost exponentiality of coalescence times in terms of convergence in distribution and
the convergence to the Wright-Fisher diffusion of voter density processes, and they hold in general
(see Proposition~\ref{prop:ccmc} and Proposition~\ref{prop:ccmc2}).
To apply these connections, we note that 
the recent work of Oliveira in \cite{O:MFC} obtains the required asymptotic behavior of coalescence times for general Markov chains under Aldous's condition discussed below. This result can be readily used to get the mean-field behavior for one-dimensional marginals of the associated voter densities.
Nonetheless, in contrast to the method of moments, we believe
that the present approach by martingale problems gives greater insight into why the convergence to the Wright-Fisher diffusion should hold. It leads to an equivalent condition in terms of
the lower-order densities in (\ref{eq:mfc0}). 

The second main result of this paper is concerned with sufficient conditions for the convergence of the differences (\ref{eq:mfc0})
in terms of the underlying sequence of voting kernels
$q^{(n)}$.  By
Proposition~\ref{prop:norm} below, the convergence in probability of the differences (\ref{eq:mfc0}) for $q^{(n)}$ can be reinforced to
convergence in $L^2$-norm. Hence with duality, it can be shown that this convergence is equivalent to a
condition involving the coalescence times of \emph{four} $q^{(n)}$-Markov chains (recall (\ref{eq:pqv}) and see the remark below Proposition~\ref{prop:norm}). We give two simpler sufficient conditions for the convergence, and each involves just \emph{two} $q^{(n)}$-Markov
chains. These conditions result from the classical conditions for almost exponentiality of hitting times (see Aldous \cite{A:MCEHT} and Proposition 5.23 of Aldous and Fill \cite{AF:MC}), 
and carry the informal idea that the time for two independent chains to coalesce ``falls far behind'' the time for the chain to get close to stationarity.  See Theorem~\ref{thm:main2} for the precise formulations.
In formalizing the time to stationarity, while one of our two conditions (cf. Theorem~\ref{thm:main2} (i)) uses mixing times and also appears in \cite{O:MFC} for almost exponentiality of coalescence times, the other one
(cf. Theorem~\ref{thm:main2} (ii)) is based on spectral gaps and can be weaker, or more readily applied in some instances. On the other hand, by duality and our result for the convergence of voter densities, the latter condition can also serve as a weaker condition for the convergence in distribution of coalescence times to sums of independent exponential variables (Proposition~\ref{prop:ccmc}). See also Section 1.1 in \cite{O:MFC} for this issue when it comes to the stronger $L_1$-Wasserstein approximation of coalescence times.

As a final remark, we compare our results with the
convergence of the rescaled \emph{measure-valued} densities
of voter models on $\bZ^d$ to super-Brownian motions as
in Cox, Durrett and Perkins \cite{CDP} for $d\geq 2$ and to
a nonnegative solution of an SPDE as in Mueller and Tribe \cite{MT} for
$d=1$. These
voter models live on infinite
spatial structures which, after rescaling, converge in the natural way to
tractable geometric objects, namely Euclidean spaces of the
same dimension, and hence allow more detailed studies of the associated voter models. 
In our case, the analysis relies on the martingale property of densities, and we circumvent the issue of limiting spatial structures by turning to analytic conditions for almost exponentiality of coalescence times.

The paper is
organized as follows. In Section~\ref{sec:MR}, we present our main results for general finite voter models.
In Section~\ref{sec:ev}, we study some martingales associated with
a density process and use the duality equation for voter models to interpret these martingale properties in terms of coalescing Markov chains.
In Section~\ref{sec:paircoal}, 
we characterize the convergence of
the second moment of density processes in terms of the
asymptotic exponentiality of coalescence times. The results in this section are the core of our approach to obtain the convergence of density processes.
In Section~\ref{sec:proof1}, we study tightness of densities
and prove a general version (see Theorem~\ref{thm:main}) of the statement that the convergence of density processes to the Wright-Fisher diffusion is equivalent to the convergence in probability of the differences in (\ref{eq:mfc0}). As an application of this result, we prove in Section~\ref{sec:proof2} two sufficient conditions, each involving only
two independent $q$-Markov chains, for the convergence of voter densities (see
Theorem~\ref{thm:main2}).
In
Section~\ref{sec:cmc}, we discuss some connections between the convergence of coalescence times and the convergence of density processes, and the main results will be given below in
Proposition~\ref{prop:ccmc} and Proposition~\ref{prop:ccmc2}. 
Finally, Section~\ref{sec:eg} is devoted to a few examples to illustrate our sufficient conditions (see Theorem~\ref{thm:main2} and Corollary~\ref{cor:main}) for the convergence of density processes to the Wright-Fisher diffusion.

\section{Main results}\label{sec:MR}
From this section on, we consider voter models subject to irreducible $Q$-matrices (recall (\ref{def:q})) unless otherwise mentioned. 
We work with a sequence of irreducible $Q$-matrices
\[
(q^{(n)},E_n)_{n\in
  \mathbb{N}} 
  \]
with stationary (probability) distributions $(\pi^{(n)})$
whenever we study voter models on large sets, and a pair $(q,E)$ with stationary distribution $\pi$ otherwise. 
The voter models associated with such a sequence $(q^{(n)},E_n)$ started at Bernoulli
product measures $\mu_u$ with density $\mu_u(\xi(x)=1)=u$ are denoted by $\big((\xi_s),\P^{(n)}_{\mu_u}\big)$.
We will \emph{always}
assume that 
\[
|E_n|\lra\infty. 
\]
Whenever necessary, other quantities depending on $(q^{(n)},E_n)$ will carry subscripts `$n$' or superscripts `$(n)$'.  

We start with our result for the equivalent condition of the convergence of voter densities to the Wright-Fisher diffusion.
Now, for any pair $(q,E)$, the associated density process $\big(p_1(\xi_{\gamma t})\big)$ for $\gamma>0$ is a martingale with jump size bounded above by $\ds\max_{x\in
  E}\pi(x)$, and its predictable quadratic variation 
takes a more general form than (\ref{eq:pqv}) which is for the simpler case (\ref{def:simple}). To state the formula for the general case, we set up some notation. 
Introduce the following measures on the product space $E\times E$ induced by $\pi$ and $q$:
\begin{align}
\nu(x,y)\equiv &\pi(x)^2q(x,y)\1_{x\neq y},\label{eq:piD}\\
\bar{\nu}(x,y)\equiv &\nu(x,y)\big / \nu(\1).\label{eq:piD1}
\end{align}
In addition, set
$p_{10}(\xi)$ and
$p_{01}(\xi)$ as the $\bpi $-weighted averages of the ordered pairs
$(1,0)$ and $(0,1)$, respectively, in the configuration
$\xi$,  given by
\begin{align}
p_{10}(\xi)=&\sum_{x,y\in E} \bpi(x,y)\xi(x)\dxi(y),\label{eq:wa1-1}\\
p_{01}(\xi)=&\sum_{x,y\in E} \bpi(x,y)\dxi(x)\xi(y)\label{eq:wa1-2}.
\end{align}
Then
\begin{align}\label{eq:psvp}
  \langle p_1(\xi_{\gamma \cdot})\rangle_t=
  \gamma\pid\int_0^t\left[p_{10}(\xi_{\gamma
      s})+p_{01}(\xi_{\gamma s})\right]ds
\end{align}
(see Proposition~\ref{prop:marts} below).
Note that if $q$ is of the particular form (\ref{def:simple}), then $\pid=1/|E|$, both $p_{10}(\xi)$ and $p_{01}(\xi)$
agree with the right-hand side of \eqref{eq:sym-p10}, and the right-hand sides of
\eqref{eq:pqv} and \eqref{eq:psvp} are equal.

Below we use $\convdn$ to denote convergence in distribution and write
\[
\piD=\sum_{x\in E}\pi(x)^2.
\]
\medskip

\begin{thm}\label{thm:main}
Let $u\in (0,1)$ and let $(\gamma_n)$ be a sequence of strictly positive constants.
Assume that
\begin{align}\label{eq:diag}
\lim_{n\to\infty}\piDn=0.
\end{align}
Then the convergence of density processes
\begin{align}
\big(p_1(\xi_{\gamma_n\cdot}), \P^{(n)}_{\mu_u}\big)
\convdn  (Y,\P_u)\label{eq:main1}
\end{align}
under the Skorokhod $J_1$-topology for c\`adl\`ag functions holds if and
only if the following {\bf mean-field condition} holds: for
any $T\in (0,\infty)$,
\begin{align}
\begin{split}\label{eq:mfc}
&\gamma_n\,\pidn\int_0^T[p_{10}(\xi_{\gamma_ns})+p_{01}(\xi_{\gamma_ns})]ds\\
&\hspace{2cm}-\int_0^Tp_1(\xi_{\gamma_ns})[1-p_1(\xi_{\gamma_ns})]ds
\convdn 0. 
\end{split}
\end{align}
\end{thm}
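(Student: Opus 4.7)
The plan is to treat both directions by exploiting the semimartingale structure of the density process. Write $M_n(t) := p_1(\xi_{\gamma_n t})$; under $\P^{(n)}_{\mu_u}$ this is a $[0,1]$-valued martingale with predictable quadratic variation
\begin{equation*}
A_n(t) := \langle M_n\rangle_t = \gamma_n\, \pidn \int_0^t \bigl[p_{10}(\xi_{\gamma_n s}) + p_{01}(\xi_{\gamma_n s})\bigr] ds
\end{equation*}
by (\ref{eq:psvp}). Set $B_n(t) := \int_0^t M_n(s)(1 - M_n(s))\, ds$, so that the mean-field condition (\ref{eq:mfc}) is precisely $A_n(T) - B_n(T) \convdn 0$ for every $T$. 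Two preliminary consequences of (\ref{eq:diag}) are immediate: first, a single flip at a site $x$ changes $p_1$ by $\pm\pi^{(n)}(x)$, so the jumps of $M_n$ are deterministically bounded by $\sqrt{\piDn}\to 0$; second, under $\mu_u$ the variable $M_n(0)$ has mean $u$ and variance $u(1-u)\piDn \to 0$, so $M_n(0)\to u$ in $L^2$.

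For sufficiency, assume (\ref{eq:mfc}). Since $B_n(T)\leq T/4$ deterministically, $A_n(T) \leq B_n(T) + o_\P(1) \leq T/4 + o_\P(1)$, and combined with the vanishing jump sizes this yields $J_1$-tightness of $(M_n)$. Along any weakly convergent subsequence, the UT/good-sequence property of uniformly bounded martingales with vanishing jumps (cf.\ Jacod--Shiryaev VI.6.26 or Ethier--Kurtz Ch.~7) lifts the convergence to joint convergence $(M_n, A_n)\convdn (X,\langle X\rangle)$ for some \emph{continuous} martingale $X$ with $X(0)=u$. The mean-field condition and continuous mapping then identify $\langle X\rangle_t = \int_0^t X_s(1-X_s)\, ds$, so $X$ solves the well-posed martingale problem for $\mathcal G$ started at $u$; this forces $X\stackrel{d}{=} Y$ and yields (\ref{eq:main1}).

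For necessity, assume (\ref{eq:main1}). The same lifting promotes this to joint convergence $(M_n, A_n)\convdn (Y,\langle Y\rangle)$. Continuity of $Y$ makes the functional $m\mapsto \int_0^T m(s)(1-m(s))\, ds$ continuous at $Y$ in the $J_1$ topology, so continuous mapping further gives joint convergence $(A_n(T), B_n(T))\convdn (\langle Y\rangle_T, \langle Y\rangle_T)$, and taking the difference coordinate produces $A_n(T)-B_n(T)\convdn 0$, i.e.\ (\ref{eq:mfc}).

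The main obstacle in both directions is the step from weak convergence of the martingales $M_n$ to joint weak convergence with their predictable quadratic variations $A_n$. The key input is that $|\Delta M_n|\leq \sqrt{\piDn}\to 0$ deterministically, which together with the uniform bound $M_n\in[0,1]$ places the sequence in the scope of continuous-martingale limit theorems. Once this lift is available, the identification of $\langle X\rangle$ via the mean-field condition (sufficiency) and the differencing argument (necessity) are routine continuous-mapping statements.
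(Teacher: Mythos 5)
Your overall strategy coincides with the paper's: view $p_1(\xi_{\gamma_n\cdot})$ as a $[0,1]$-valued martingale with jumps bounded by $\sqrt{\piDn}\to 0$, lift weak convergence to joint convergence with the bracket, and identify limits through the well-posed martingale problem for $\mathcal G$ (this is exactly Sections~\ref{sec:proof1}). However, two of your central steps are asserted rather than proved, and one of them is false as literally stated.

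First, in the sufficiency direction you deduce $J_1$-tightness of $(M_n)$ from the terminal bound $A_n(T)\le T/4+o_\P(1)$ plus vanishing jumps. That implication fails: take $M_n(t)=W\big(\min(n(t-\tfrac12)^+,1)\big)$ with $W$ a Brownian motion started at $1/2$ and stopped on hitting $\{0,1\}$; these are $[0,1]$-valued continuous martingales with $\langle M_n\rangle_T\le 1$ and no jumps, yet the sequence is not $C$-tight and hence not $J_1$-tight. What is actually needed is control of the \emph{increments} of $A_n$ over short (random) time intervals, i.e.\ $C$-tightness of $A_n$; the paper obtains this via the Aldous criterion and the duality bounds \eqref{eq:p1001bnd}, \eqref{eq:MUV2} in the proof of Theorem~\ref{thm:lmdp1}, which is precisely why it first upgrades the mean-field condition to second-moment information (Proposition~\ref{prop:norm} and the integral-equation argument in Theorem~\ref{thm:almostexp}). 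Your hypotheses do contain a repair --- \eqref{eq:mfc} holds for every $T$, $A_n$ is nondecreasing and $B_n$ is $\tfrac14$-Lipschitz, so on a fixed grid $A_n(t)-A_n(s)\le (t-s)/4+o_\P(1)$ yields $C$-tightness of $A_n$ and then tightness of $M_n$ by Theorem~VI.4.13 of \cite{JS:LT} --- but you did not make this argument, and it is the crux of the step.

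Second, in both directions you cite the P-UT/good-sequence theorems to pass from $M_n\convdn X$ to $(M_n,A_n)\convdn (X,\langle X\rangle)$. Those theorems deliver joint convergence with the \emph{optional} bracket $[M_n]$, not the \emph{predictable} bracket $A_n=\langle M_n\rangle$ that appears in \eqref{eq:mfc}; moreover, identifying a subsequential limit $B$ of $A_n$ with $\langle X\rangle$ requires uniform integrability of $A_n(T)$, which is exactly where the paper proves the $L^2$ bound \eqref{eq:L2PQV} and then invokes the standard characterization of the bracket. A short bridge does exist: since $A_n$ is continuous (by \eqref{eq:psvp}), $N_n=[M_n]-A_n$ is a martingale with jumps $(\Delta M_n)^2\le\piDn$, so $\E[N_n(T)^2]\le\E\big[[N_n]_T\big]\le\piDn\,\E\big[[M_n]_T\big]\le\piDn\to 0$ and Doob's inequality gives $\sup_{t\le T}|[M_n]_t-A_n(t)|\to 0$ in probability; similarly $\E[A_n(T)^2]\le 2$ can be derived from the boundedness of $M_n$, giving the needed uniform integrability. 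Some such argument must be supplied --- you yourself flag this lifting as ``the main obstacle,'' but the references you invoke do not cover it, so as written the proposal has a genuine gap at its key step in both the necessity and the sufficiency direction.
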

\medskip 

We will show in Section~\ref{sec:paircoal} below (see Theorem~\ref{thm:2ndmom}) that the condition (\ref{eq:diag}) is in fact necessary for (\ref{eq:main1}).

Next, we discuss our second main result which gives sufficient conditions for the mean-field condition (\ref{eq:mfc}).
We need some notation concerning the mixing of the $q$-Markov chain.
Let $(q_t)=(e^{tq})$ be the semigroup of the
$q$-Markov chain on $E$, and
$d_E$ be the maximal total variation distance 
\begin{equation}\label{eq:dedef}
d_E(t)=\max_{x\in E}\left\|q_t(x,\cdot)-
\pi(\cdot)\right\|_{\rm TV},
\end{equation}
where $\|\cdot\|_{\rm TV}$ refers to the total variation distance. Note that $d_E(t)$ is always finite.
We recall that the mixing time 
\begin{align}\label{def:tmix}
\mathbf t_{\rm mix}=\inf\left\{t\geq
  0:d_E(t)\leq\frac{1}{2e}\right\}<\infty 
\end{align}
provides, informally speaking, one measurement of the time for
the one-dimensional marginals to get
close to the equilibrium 
distribution $\pi$.  An alternative for this purpose for the $q$-Markov chain is the associated relaxation time $\gap^{-1}$, where $\gap\in (0,\infty)$ is  the spectral gap and is the second smallest eigenvalue of $-q$. We refer to \cite{AF:MC}
and \cite{LPW} for standard properties of spectral gaps and their connections with mixing times (the arguments there can be adapted in a straightforward manner to the context of Markov chains defined by general $Q$-matrices according to the setup in Section 1.1 of \cite{CLR}).
In particular, we note that ${\mathbf g}^{-1}\le
\tmix $. 

Next, let $M_{U,U'}$ be
the meeting time of two independent $q$-Markov chains with
semigroup $(q_t)$ started at spatial locations
$(U,U')$, where the sites $U$ and $U'$ are independent and
distributed according to $\pi$. We define the
expected meeting time to be
\begin{equation}\label{tmeet}
\tmeet =  \mathbf{E}[M_{U,U'}] .
\end{equation}

\medskip

\begin{thm}\label{thm:main2}
For each $n\in \Bbb N$,  let $\mathbf g_n$, $\tmixn$ and $\tmeetn$ be the spectral
  gap, mixing time and expected meeting time of the
  $q^{(n)}$-Markov chain, respectively.
  In addition, we put
\begin{align*}
\ds\pinmax =& \max\{\pi^{(n)}(x);x\in E_n\},\\
q_{\max}^{(n)}=&\max\{q^{(n)}(x);x\in E_n\}.
\end{align*}  
(Recall that the voting rates $q^{(n)}(x)$ are defined in (\ref{def:q}).)
  Suppose that either of the
  following conditions is satisfied:
\begin{enumerate}
\item [\rm (i)] $\ds\lim_{n\to\infty}\piDn=0\quad\mbox{ and }\quad
  \displaystyle \lim_{n\to\infty} 
\frac{\tmixn}{\tmeetn}=0$,
\item [\rm (ii)] the $q^{(n)}$-Markov chains are
  reversible and satisfy, 
\begin{align}\label{eq:(ii)}
&\ds\lim_{n\to\infty}\piDn=0\quad\mbox{and}\quad \displaystyle \lim_{n\to\infty}
\dfrac{\log\big(e\vee\tmeetn\pinmax q_{\max}^{(n)}\big)}{\gap_n\tmeetn}=0.
\end{align}
\end{enumerate}
Then for all $u\in[0,1]$, 
(\ref{eq:mfc}) holds with
$\gamma_n= \tmeetn$, and consequently, \eqref{eq:main1} holds. 
\end{thm}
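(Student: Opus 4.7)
The plan is to apply Theorem~\ref{thm:main} with $\gamma_n=\tmeetn$: under each hypothesis, I will verify the mean-field condition~\eqref{eq:mfc}, after which~\eqref{eq:main1} follows immediately. By Proposition~\ref{prop:norm}, convergence in probability of the difference in~\eqref{eq:mfc} is equivalent to $L^2$-convergence (the integrands are uniformly bounded), so it suffices to prove the latter. The unifying mechanism is the voter--coalescing duality applied to the Bernoulli law $\mu_u$, which recasts both terms of~\eqref{eq:mfc} as explicit functionals of meeting-time distributions of two (and, for the second moment, up to four) independent $q^{(n)}$-Markov chains.

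Concretely, a short duality computation yields
\begin{align*}
\En_{\mu_u}[p_1(\xi_t)(1-p_1(\xi_t))]&=u(1-u)\,F_n(t),\\
\En_{\mu_u}[p_{10}(\xi_t)+p_{01}(\xi_t)]&=2u(1-u)\,G_n(t),
\end{align*}
where $M$ is the meeting time of two independent $q^{(n)}$-chains, $F_n(t)=\P_{\pi^{(n)}\otimes\pi^{(n)}}(M>t)$, and $G_n(t)=\sum_{x,y}\bpi_n(x,y)\P^{x,y}(M>t)$. After the change of variable $r=\gamma_n s$ with $\gamma_n=\tmeetn$, the expectation of the difference in~\eqref{eq:mfc} becomes
\[
u(1-u)\left[2\pidn\int_0^{\gamma_n T}G_n(r)\,dr-\frac{1}{\gamma_n}\int_0^{\gamma_n T}F_n(r)\,dr\right].
\]
By construction $\int_0^\infty F_n=\gamma_n$, while sending $t\to\infty$ in~\eqref{eq:psvp} and comparing with $\En_{\mu_u}[\langle p_1\rangle_\infty]=u(1-u)(1-\piDn)$ yields the Kac-type identity $2\pidn\int_0^\infty G_n=1-\piDn$. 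Under the assumption $\piDn\to 0$, both $\gamma_n^{-1}\int_0^{\gamma_n T}F_n$ and $2\pidn\int_0^{\gamma_n T}G_n$ therefore converge to $1-e^{-T}$ as soon as $F_n(\gamma_n\cdot)$ and $G_n(\gamma_n\cdot)$ are close to $e^{-\cdot}$. The $L^2$ control requires the second moment of the same difference, which unfolds into double integrals of four-chain coalescence functionals; Markov conditioning at an intermediate time, combined with the two-chain mixing provided by (i) or (ii), reduces those to products of the first-moment pieces above.

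The two hypotheses (i) and (ii) differ only in how almost-exponentiality of $M/\tmeetn$ is established. Under (i), I would invoke the theorem of Oliveira~\cite{O:MFC}: the assumption $\tmixn/\tmeetn\to 0$ forces the law of $M/\tmeetn$ started from $\pi^{(n)}\otimes\pi^{(n)}$ to converge to $\mathrm{Exp}(1)$ in a quantitative $L^1$-Wasserstein sense, and $\piDn\to 0$ propagates this bound to the $\bpi_n$-weighted tail $G_n$. Under (ii), with only reversibility and a spectral gap, I would develop a self-contained spectral argument. Viewing $M$ as the hitting time of the diagonal $\Delta\subset E_n\times E_n$ for the product chain (which is reversible with spectral gap $\gap_n$), I would analyse the absorbed semigroup on $E_n\times E_n\setminus\Delta$: self-adjointness with respect to $\pi^{(n)}\otimes\pi^{(n)}$ yields a spectral decomposition, and combining a short-time $L^2\to L^\infty$ (Nash-type) smoothing bound---which produces the logarithmic factor $\log(e\vee\tmeetn\pinmax q_{\max}^{(n)})$ from a uniform control on the jump rates---with the $e^{-\gap_n t}$ long-time contraction shows that the principal eigenvalue agrees with $\tmeetn^{-1}$ up to a multiplicative $o(1)$. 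This forces $F_n(\gamma_n t)\to e^{-t}$ uniformly on compacts, and $G_n(\gamma_n t)$ likewise via $\piDn\to 0$.

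The main technical obstacle is the spectral estimate under~(ii): condition~(i) is essentially a citation of Oliveira plus duality bookkeeping, whereas (ii) requires a Nash-type smoothing estimate for the absorbed semigroup together with a Poincar\'e argument in the spirit of Proposition~5.23 of Aldous and Fill~\cite{AF:MC}, and the trade-off between these two inputs is precisely what produces the correction $\log(e\vee\tmeetn\pinmax q_{\max}^{(n)})/(\gap_n\tmeetn)$ appearing in~\eqref{eq:(ii)}. All remaining steps---passage from first to second moment, transfer of the initial law from $\pi^{(n)}\otimes\pi^{(n)}$ to $\bpi_n$ via $\piDn\to 0$, and the upgrade to Skorokhod convergence in~\eqref{eq:main1} through Theorem~\ref{thm:main}---are comparatively routine.
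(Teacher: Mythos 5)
Your high-level architecture coincides with the paper's: reduce to the mean-field condition via Theorem~\ref{thm:main}, upgrade to a norm statement via Proposition~\ref{prop:norm}, use duality to express everything through meeting times, establish almost exponentiality of $M_{U,U'}/\tmeetn$ (under (i) via a mixing-time criterion for the product chain, under (ii) via its spectral gap $\gap_n/2$), and kill the second moment by conditioning at an intermediate time scale. However, there are two genuine gaps where your sketch would not close as written.

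First, the second-moment step is the technical heart of the proof and your one-sentence description of it omits the ingredient that makes it work. After you condition the four-chain functional at an intermediate time $s-2\delta_n$, the configuration $\xi_{\gamma_n(s-2\delta_n)}$ is an \emph{arbitrary} random configuration, so the ``first-moment pieces'' you want to recover are not the $\mu_u$-averaged quantities $F_n,G_n$ but the quantities $\E_\xi[p_{10}(\xi_{2\gamma_n\delta_n})+p_{01}(\xi_{2\gamma_n\delta_n})]$ for a \emph{generic} $\xi$. What is needed, and what the paper proves (Proposition~\ref{prop:main2-1} and Lemma~\ref{lem:main2}), is a bound on $\big|\gamma_n\pidn\,\E_\xi[p_{10}+p_{01}](\xi_{2\gamma_n\delta_n})-p_1(\xi)p_0(\xi)\big|$ that is \emph{uniform in $\xi$} and $o(1)$, for a carefully chosen scale $\delta_n=s_n'/\gamma_n\to0$ with $2\gamma_n\pidn\mathbf P^{(n)}(M_{V,V'}>s_n')\to1$ (the existence and properties of such $s_n'$ rest on Proposition~\ref{prop:snextra}). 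Without this uniform estimate the ``reduction to products of first-moment pieces'' does not follow; with it, one never needs the four-chain functionals at all, which is precisely the point of the remark following Proposition~\ref{prop:norm}.

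Second, you have misattributed the origin of the logarithmic factor in condition (ii), and the mechanism you propose in its place is problematic. Almost exponentiality of $M_{U,U'}/\tmeetn$ under (ii) needs only $\gap_n\tmeetn\to\infty$ (Aldous--Fill, Proposition~5.23/3.23, applied to the product chain); no Nash-type $L^2\to L^\infty$ smoothing is involved, and such a bound is not available for general reversible chains without extra structure. The logarithm enters elsewhere: in the uniform-in-$\xi$ estimate just described, the pair $(X^V_s,X^{V'}_s)$ is started from $\bpi_n$, whose one-dimensional marginals satisfy only $\mathbf P(X^V_s=x)\le\frac{\pi_{\max}q_{\max}}{\nu(\1)}\pi(x)$; combining this density bound with the Poincar\'e decay ${\rm Var}_\pi(q_t\xi)\le e^{-2\gap_n t}$ produces an error term of order $\gamma_n\pi_{\max}q_{\max}\,e^{-\gap_n s_n'}$, and one needs $\gap_n s_n'\gg\log\big(e\vee\tmeetn\pinmax q^{(n)}_{\max}\big)$ with $s_n'=o(\gamma_n)$ to beat it --- which is exactly the second condition in \eqref{eq:(ii)}. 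Relatedly, your claim that the passage from $\pi^{(n)}\otimes\pi^{(n)}$ to $\bpi_n$ is ``routine via $\piDn\to0$'' is false in the form you need it: $\piDn\to0$ says nothing about the Radon--Nikodym derivative of $\bpi_n$ with respect to $\pi^{(n)}\otimes\pi^{(n)}$. The integrated-tail transfer is indeed automatic from the exact identity \eqref{eq:MUV}, but the uniform, pointwise-in-$\xi$ transfer required for the second-moment argument is precisely where $\pi_{\max}q_{\max}$ (and hence the logarithm) is forced into the hypothesis.
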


\medskip

Let us make some observation for the condition (ii) of Theorem~\ref{thm:main2}.
From an inequality (see (\ref{eq:meetlowerbound})) proved later on, we have 
\begin{align}\label{eq:tmeetnlbd}
\tmeetn \pinmax q_{\max}^{(n)}\geq \frac{(1-\piDn)^2}{4}.
\end{align}
Also, it is plain that 
\begin{align}\label{eq:piDnpinmax}
\lim_{n\to\infty}\piDn=0\Longleftrightarrow \lim_{n\to\infty}\pinmax=0.
\end{align}
Hence if the voting rates $\big(q^{(n)}(x);x\in E_n\big)$ are uniformly bounded and $\lim_{n\to\infty}\piDn=0$, then $\lim_{n\to\infty}\tmeetn=\infty$, and moreover,
$\tmeetn$ has order at least $(\pinmax)^{-1}$. This, applied to the second part of (\ref{eq:(ii)}), gives the following.

\begin{cor}\label{cor:main}
If the Markov chains defined by $(q^{(n)},E_n)$ are reversible and satisfy $\ds\lim_{n\to\infty}\piDn=0$,
\begin{align*}
\begin{split}
&\limsup_{n\to\infty}\max_{x\in E_n}q^{(n)}(x)<\infty,\quad\mbox{and}\quad\liminf_{n\to\infty}\gap_n>0,
\end{split}
\end{align*}
then the same conclusions of Theorem~\ref{thm:main2} hold. In particular, these conditions hold when $q^{(n)}=p^{(n)}-{\rm Id}_{E_n}$ for symmetric
probability matrices $p^{(n)}$ (not necessarily with zero diagonals), and
the Markov chains defined by $(q^{(n)},E_n)$ satisfy $\liminf_{n\to\infty}\gap_n>0$.
\end{cor}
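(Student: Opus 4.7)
The plan is to derive Corollary~\ref{cor:main} directly from condition (ii) of Theorem~\ref{thm:main2}. Since $\lim_{n\to\infty}\piDn=0$ is already the first half of \eqref{eq:(ii)}, the only thing left to verify is
\[
\lim_{n\to\infty}\frac{\log\bigl(e\vee\tmeetn\pinmax q_{\max}^{(n)}\bigr)}{\gap_n\tmeetn}=0,
\]
and this reduces to quantitative bounds on $\tmeetn$.

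For the numerator I would use $\pinmax\leq 1$ together with $q_{\max}^{(n)}\leq C$, valid for some constant $C$ by the hypothesis $\limsup_n q_{\max}^{(n)}<\infty$, to bound $\tmeetn\pinmax q_{\max}^{(n)}\leq C\tmeetn$, so the numerator is at most $\log(e\vee C\tmeetn)$. For the denominator I would combine the inequality \eqref{eq:tmeetnlbd} with the equivalence \eqref{eq:piDnpinmax} and $q_{\max}^{(n)}\leq C$ to get $\tmeetn\geq (1-\piDn)^2/(4C\pinmax)\to\infty$; multiplying by $\gap_n\geq c>0$, which holds eventually by $\liminf_n\gap_n>0$, yields $\gap_n\tmeetn\to\infty$ at linear rate in $\tmeetn$. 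The ratio is therefore of order $\log\tmeetn/\tmeetn\to 0$, so Theorem~\ref{thm:main2}(ii) delivers \eqref{eq:mfc} with $\gamma_n=\tmeetn$ and hence \eqref{eq:main1}.

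For the ``in particular'' clause I would simply check that the three general hypotheses of the corollary are satisfied by $q^{(n)}=p^{(n)}-{\rm Id}_{E_n}$ with $p^{(n)}$ symmetric and stochastic: the uniform distribution is then stationary and reversible, so $\piDn=\pinmax=1/|E_n|\to 0$; the voting rates $q^{(n)}(x)=1-p^{(n)}(x,x)\leq 1$ are uniformly bounded; and the spectral-gap lower bound is imposed by hypothesis. I do not expect any real obstacle: the proof is essentially a size comparison, with the logarithmic numerator in \eqref{eq:(ii)} dominated by the linear denominator the moment one knows both that $\tmeetn\to\infty$ and that $\gap_n$ stays bounded below.
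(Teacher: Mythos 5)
Your proposal is correct and follows essentially the same route as the paper: the text preceding Corollary~\ref{cor:main} derives it from Theorem~\ref{thm:main2}(ii) by exactly this size comparison, using \eqref{eq:tmeetnlbd} and \eqref{eq:piDnpinmax} together with the uniform bound on the voting rates to conclude $\tmeetn\to\infty$ (of order at least $(\pinmax)^{-1}$), so that the logarithmic numerator in \eqref{eq:(ii)} is dominated by the denominator $\gap_n\tmeetn$. Your verification of the ``in particular'' clause (uniform stationary distribution, $\piDn=1/|E_n|\to 0$, rates bounded by $1$) is also exactly what the paper intends.
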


If the sequence $\big(\tmeetn \pinmax q_{\max}^{(n)}\big)_{n\in \Bbb N}$ is bounded above, then plainly the
second condition in (\ref{eq:(ii)}) reduces to
\begin{align}\label{gapmeet}
\lim_{n\to\infty}\gap_n\tmeetn =\infty.
\end{align}
This is the condition suggested by Aldous and Fill on almost exponentiality of hitting times in \cite{AF:MC}, for the particular case of the first meeting time of two independent $q$-Markov chains (see also Section 1.1 in \cite{O:MFC}). Moreover, if $q^{(n)}=p^{(n)}-{\rm Id}_{E_n}$ for a probability matrix $p^{(n)}$ and the matrices $p^{(n)}$ satisfy sufficient symmetry (see Chapter 7 in \cite{AF:MC} for the notion of \emph{symmetric chains} and note that it is stronger than requiring $p^{(n)}(x,y)=p^{(n)}(y,x)$ for any $x,y$), then 
$2\tmeet^{(n)}$ is equal to the so-called \emph{random target time} and so can be expressed explicitly in terms of the eigenvalues of $p^{(n)}$ (Section 4.2 in \cite{AF:MC}). In this case, the condition (\ref{gapmeet}) only involves the eigenvalues of $-q^{(n)}$.

\begin{rmk}
One notion of ``transience''
  (respectively, ``recurrence'') for a sequence of finite
  Markov chains (see Section~15.2.3 in \cite{AF:MC}) is essentially
  that the sequence $\big( \tmeetn\pinmax q^{(n)}_{\max}\big)_{n\in \Bbb N}$ be bounded above
  (respectively, tend to infinity).  See Remark~\ref{rmk:rec_trans} for more details on this terminology. Theorem~\ref{thm:main2} applies in both cases. 
In fact, we use considerably more delicate arguments in the present proof of Theorem~\ref{thm:main2}, in order to take into account  
the recurrent case as well. \qed
\end{rmk}

Our last results concern coalescence times of Markov chains.
Suppose again
that we have a sequence of irreducible $Q$-matrices
$(q^{(n)},E_n)$, with stationary distributions $(\pi^{(n)})$. For a given $n$, let $U_1,U_2,\dots$ be i.i.d. with
distribution $\pi^{(n)}$. Let $(\hat X^x_t,x\in E_n)$ be a
system of coalescing $q^{(n)}$-Markov chains, with $\hat X^x_0=x$, independent of the $U_i$'s. This means that the
$q^{(n)}$-Markov chains $\hat X^x$ move independently until they
meet, at which time they coalesce and move together.
Define the coalescence times
\[
\cnkj = \inf\{t>0; |\{\hat X^{U_1}_t,\dots,\hat X^{U_k}_t\}|=j\},
\quad
1\le j\leq k\le |E_n|,
\]
and let $Z_2,Z_3,\dots$ be independent exponential random
variables with 
$\mathbf{E}[Z_j]=1/\binom{j}{2}$. In the
mean-field case, it is well-known and
easy to check that with $\gamma_n=|E_n|/2$,
\begin{equation}\label{eq:coalconv}
\frac{\cnkj}{\gamma_n} \convdn \sum_{i=j+1}^kZ_i,  \quad
1\le j<k<\infty.
\end{equation}
(See Chapter 14 in \cite{AF:MC}.)
In fact, this convergence is an easy consequence of the convergence of
voter model densities to the Wright-Fisher diffusion.

\begin{prop}
\label{prop:ccmc} If \eqref{eq:main1} holds,
  then so does \eqref{eq:coalconv}. In 
  particular, if either of the conditions of
  Theorem~\ref{thm:main2} hold, then so does
  \eqref{eq:coalconv} with $\gamma_n=\tmeetn$.
\end{prop}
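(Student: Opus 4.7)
The plan is to combine two dualities: the standard voter-model/coalescing-chains duality used throughout the paper, and the classical moment duality between the Wright-Fisher diffusion and Kingman's coalescent. The hypothesis \eqref{eq:main1} supplies convergence of every integer moment of the voter density at each fixed time, and the two dualities translate this into convergence of coalescence times.

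First I would set up the moment identity. Fix $k\ge 2$, $u\in(0,1)$, and $t>0$. Let $U_1,\ldots,U_k$ be i.i.d.\ with distribution $\pi^{(n)}$, independent of both the voter model $\bigl((\xi_s),\P^{(n)}_{\mu_u}\bigr)$ and of the coalescing system $(\hat X^x)$ used to define $\cnkj$. Expanding $p_1(\xi_t)^k=\mathbb{E}\bigl[\prod_{i=1}^k\xi_t(U_i)\bigm|(\xi_s)\bigr]$, and invoking voter-model duality together with the independence of the coordinates $\xi_0(x)$ under $\mu_u$, one obtains
\[
\mathbb{E}^{(n)}_{\mu_u}\!\bigl[p_1(\xi_t)^k\bigr]=\mathbb{E}\bigl[u^{N_t^{(k,n)}}\bigr],\qquad N_t^{(k,n)}:=\bigl|\{\hat X^{U_i}_t:1\le i\le k\}\bigr|.
\]
The key observation is that $\{\cnkj\le t\}=\{N_t^{(k,n)}\le j\}$, so the law of $N^{(k,n)}_{\gamma_n t}$ fully encodes the distribution of $\cnkj/\gamma_n$.

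Next I would identify the limit. By \eqref{eq:main1} applied at time $t$ together with bounded convergence (since $p_1(\xi_t)^k\in[0,1]$), $\mathbb{E}^{(n)}_{\mu_u}[p_1(\xi_{\gamma_n t})^k]\to\mathbb{E}_u[Y_t^k]$. The classical moment duality for the Wright-Fisher diffusion---obtained by checking that $(t,u)\mapsto\mathbb{E}_u[Y_t^k]$ and $(t,u)\mapsto\mathbb{E}[u^{K_t}]$ satisfy the same linear system $\tfrac{d}{dt}v_k=\binom{k}{2}(v_{k-1}-v_k)$ with matching initial data, where $K_t$ is Kingman's block-count process started from $k$ blocks---gives $\mathbb{E}_u[Y_t^k]=\mathbb{E}[u^{K_t}]$. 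Since $N^{(k,n)}_{\gamma_n t}$ and $K_t$ both take values in $\{1,\ldots,k\}$, convergence of their probability-generating polynomials at any $k$ distinct values of $u$ forces $N^{(k,n)}_{\gamma_n t}\convdn K_t$.

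Finally, Kingman's coalescent spends independent exponential holding times $Z_k,Z_{k-1},\ldots,Z_2$ (with $\mathbb{E}[Z_i]=1/\binom{i}{2}$) in states $k,k-1,\ldots,2$, so $\{K_t\le j\}=\{\sum_{i=j+1}^k Z_i\le t\}$. Therefore
\[
\mathbb{P}\!\left(\frac{\cnkj}{\gamma_n}\le t\right)=\mathbb{P}\bigl(N^{(k,n)}_{\gamma_n t}\le j\bigr)\xrightarrow[n\to\infty]{}\mathbb{P}\!\left(\sum_{i=j+1}^k Z_i\le t\right)
\]
for every $t>0$, and continuity of the limit CDF yields \eqref{eq:coalconv}. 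I expect the main obstacle to be mostly bookkeeping: the duality identity must be stated carefully for the general irreducible kernel $q^{(n)}$ (rather than the symmetric case \eqref{def:simple} used in the exposition), but no new analytic estimate beyond \eqref{eq:main1} is required.
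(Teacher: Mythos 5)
Your proposal is correct and is essentially the paper's own argument: both proofs combine the voter-model/coalescing-chain duality with the Wright--Fisher/block-counting duality \eqref{eq:dualityWF} to match the polynomial $\sum_j u^j\mathbf P^{(n)}(N^{(k,n)}_{\gamma_n t}=j)$ against $\mathbf E_k[u^{D_t}]$ and extract coefficient-wise convergence. The only cosmetic difference is the last step, where you read off the CDF of $\cnkj/\gamma_n$ directly from $\{N^{(k,n)}_{\gamma_n t}\le j\}$ while the paper telescopes via Laplace transforms; both are valid.
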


We refer the readers to \cite{O:MFC} and
\cite{O:CT} for recent results on the almost exponentiality of Markov chain hitting times
of general sets, and in particular, of Markov chain coalescence times.  
These results give the convergence in (\ref{eq:coalconv}) with explicit convergence rates under slightly different
conditions than the ones we give here. 
Remarkably,
the convergence of the ``full'' coalescence times $\hat{\sf C}^{(n)}_1$ of $\{\hat
X^x;x\in E_n\}$ is also obtained in \cite{O:MFC}, where
\[
\hat{\sf C}^{(n)}_j=\inf\{t\geq 0;|\{\hat{X}^{x}_t;x\in E_n\}|=j\},\quad 1\leq j\leq |E_n|.
\] 
In this direction, we also have Proposition~\ref{prop:ccmc2} below, which interprets the convergence of full coalescence times in terms of the convergence of voter densities to the Wright-Fisher diffusion. 

\begin{prop}\label{prop:ccmc2} Let $\tau_1^{(n)}$ denote the first hitting time of $1$ by the density process $\big(p_1(\xi_{\gamma_n t})\big)$, and
$\tau^{Y}_1$ the first hitting time of $1$ by the Wright-Fisher diffusion $(Y_t)$.
Then the following convergences are equivalent:
\begin{align}
\left(\frac{ \tau_1^{(n)}}{\gamma_n},\P^{(n)}_{\mu_u} \right)&\convdn\left( \tau^{Y}_1,\P_u\right),\quad \forall\;u\in [0,1],\label{eq:full1}\\
\quad\frac{\hat{{\sf C}}^{(n)}_{j}}{\gamma_n}&\convdn \sum_{i=j+1}^\infty Z_i,\quad \forall\; j\in \Bbb N.\label{eq:full2}
\end{align}
\end{prop}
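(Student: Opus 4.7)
The plan is to convert both events $\{\tau_1^{(n)} \leq t\}$ and $\{\tau_1^Y \leq t\}$ into statements about the number of surviving ancestral lineages at time $t$, using the voter-model/coalescing-dual duality on the discrete side and the Wright--Fisher/Kingman moment duality on the continuous side, and then to read off the equivalence from the identities $\{N_s^{(n)} \leq j\} = \{\hat{{\sf C}}_j^{(n)} \leq s\}$ and $\{K_t^{\infty} \leq j\} = \{\sum_{i=j+1}^\infty Z_i \leq t\}$ satisfied by the lineage-counting processes.

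First, since $\xi \equiv 1$ is an absorbing state for the voter model, $\{\tau_1^{(n)} \leq t\} = \{\xi_{\gamma_n t} \equiv 1\}$. Writing $N_s^{(n)} = |\{\hat X^x_s : x \in E_n\}|$ for the number of distinct lineages of the coalescing dual at time $s$, voter-model duality combined with the product initial condition $\mu_u$ yields
\[
\P^{(n)}_{\mu_u}(\tau_1^{(n)} \leq t) \;=\; \P^{(n)}_{\mu_u}(\xi_{\gamma_n t} \equiv 1) \;=\; \E\bigl[u^{N^{(n)}_{\gamma_n t}}\bigr],
\]
the last step by conditioning on the dual and using the $u$-Bernoulli marginals of $\mu_u$. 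Analogously, $1$ is absorbing for $Y$, and the classical moment duality $\E_u[Y_t^k] = \E[u^{K_t^{(k)}}]$ with the block-counting process $K^{(k)}$ of Kingman's coalescent started at $k$ blocks, passed to $k \to \infty$ by bounded convergence, gives
\[
\P_u(\tau_1^Y \leq t) \;=\; \P_u(Y_t = 1) \;=\; \E\bigl[u^{K_t^{\infty}}\bigr],
\]
where $K_t^{\infty}$ is the block-counting process of Kingman's coalescent coming down from infinity.

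For the direction (\ref{eq:full2}) $\Rightarrow$ (\ref{eq:full1}), the assumed convergence $\hat{{\sf C}}_j^{(n)}/\gamma_n \convdn \sum_{i=j+1}^\infty Z_i$ for each $j$ translates, via the event-identities above, into the $\mathbb{N}$-valued convergence $N^{(n)}_{\gamma_n t} \convdn K_t^{\infty}$ for each fixed $t > 0$. Bounded convergence then gives $\E\bigl[u^{N^{(n)}_{\gamma_n t}}\bigr] \to \E\bigl[u^{K_t^{\infty}}\bigr]$ for every $u \in [0, 1]$, which by the two duality identities is precisely $\P^{(n)}_{\mu_u}(\tau_1^{(n)} \leq t) \to \P_u(\tau_1^Y \leq t)$ for every $t \in [0, \infty)$. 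Since the distribution of $\tau_1^Y$ on $[0, \infty)$ is continuous and has an atom at $\infty$ of mass $1 - u$ which matches $\P^{(n)}_{\mu_u}(\tau_1^{(n)} = \infty) = 1 - u$ (by the martingale property of $p_1(\xi_\cdot)$ together with eventual consensus of the voter model), this pointwise CDF convergence upgrades to convergence in distribution on $[0, \infty]$, yielding (\ref{eq:full1}).

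For the converse (\ref{eq:full1}) $\Rightarrow$ (\ref{eq:full2}), applying (\ref{eq:full1}) at each $u \in (0, 1)$ and $t \in [0, \infty)$ and invoking the duality identities gives $\E\bigl[u^{N^{(n)}_{\gamma_n t}}\bigr] \to \E\bigl[u^{K_t^{\infty}}\bigr]$ pointwise in $u \in (0, 1)$. Since the probability generating function of an $\mathbb{N}$-valued random variable determines its law, this forces $N^{(n)}_{\gamma_n t} \convdn K_t^{\infty}$, and hence
\[
\P\bigl(\hat{{\sf C}}_j^{(n)}/\gamma_n \leq t\bigr) \;=\; \P\bigl(N^{(n)}_{\gamma_n t} \leq j\bigr) \;\lra\; \P\bigl(K_t^{\infty} \leq j\bigr) \;=\; \P\Bigl(\sum_{i=j+1}^\infty Z_i \leq t\Bigr),
\]
which is (\ref{eq:full2}). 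The main technical point is precisely the availability of (\ref{eq:full1}) for the \emph{full} range of initial densities $u \in [0, 1]$, without which one could not invert the generating function to recover the law of the lineage count; the remainder of the argument is bookkeeping via duality.
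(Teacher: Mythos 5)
Your proof is correct and follows essentially the same route as the paper's: both arguments use voter/coalescent duality and the moment duality \eqref{eq:dualityWF} to rewrite $\P^{(n)}_{\mu_u}(\tau_1^{(n)}\leq \gamma_n t)$ and $\P_u(\tau_1^Y\leq t)$ as generating functions of the lineage count (equivalently, of the point masses $\mathbf P^{(n)}(\hat{\sf C}^{(n)}_j\leq \gamma_n t<\hat{\sf C}^{(n)}_{j-1})$), and then pass between the two convergences by bounded convergence in one direction and by inverting the generating functions in the other. The only difference is that where you invoke the standard continuity theorem for probability generating functions, the paper proves the needed inversion from scratch via a subsequence/diagonal argument on the coefficients $a^n_j$; this is a presentational rather than a substantive difference.
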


We note that the convergence (\ref{eq:full1}) does not follow immediately from the weak convergence of density processes since first hitting times are in general not continuous with respect to the Skorokhod $J_1$-topology. To see this, we may reinforce the convergence (\ref{eq:main1}) to almost-sure convergence in the Skorokhod $J_1$-topology by the Skorokhod representation (see \cite{EK:MP}). Then, for example,
the approximating density processes $\big(p_1(\xi_{\gamma_n\cdot}),\P^{(n)}_{\mu_u}\big)$ may ``linger'' very close to the absorbing state $1$ for long periods of time before getting absorbed at $1$, while the limiting process $(Y_t)$ has already reached $1$. Hence, (\ref{eq:full1}) rules out this lingering behavior of the density process $\big(p_1(\xi_{\gamma_n\cdot}),\P^{(n)}_{\mu_u}\big)$ for all large $n$ in particular.

\section{Martingale property and duality}\label{sec:ev}
Fix a Markov chain defined by $(q,E)$ with stationary distribution $\pi$, and consider the corresponding voter model
$(\xi_t)$. Recall the definition (\ref{eq:mc}) of $p_1$, and set 
\[
p_0\equiv 1-p_1.
\]
In this section, we identify some martingales associated with the density process $\big(p_1(\xi_t)\big)$ and then resort to the duality equation for voter models (see (\ref{eq:duality}) below) for their interpretations in terms of coalescing Markov chains.

\begin{prop}\label{prop:marts} For any initial
  configuration $\xi\in\{0,1\}^E$, all of the following three processes are
  $\P_\xi$-martingales:
\begin{enumerate}
\item [\rm (i)] $\big(p_1(\xi_t)\big)$
\item [\rm (ii)] $\ds \left(p_1(\xi_t)p_0(\xi_t)+\pid \int_0^t
[p_{10}(\xi_s)+p_{01}(\xi_s)]ds\right)$
\item [\rm (iii)] $\ds\left(p^2_1(\xi_t)- \pid \int_0^t
[p_{10}(\xi_s)+p_{01}(\xi_s)]ds\right)$.
\end{enumerate}
\end{prop}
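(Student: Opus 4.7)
The plan is to prove all three martingale assertions by direct computation with the generator $\mathcal L$ in \eqref{eq:gen}. Since $\{0,1\}^E$ is finite, $\mathcal L$ is bounded and Dynkin's formula turns any $\mathcal L$-calculation into a genuine martingale statement, so it is enough to evaluate $\mathcal L p_1$ and $\mathcal L p_1^2$ pointwise.

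For part (i), I would substitute $p_1(\xi^x) - p_1(\xi) = \pi(x)(1-2\xi(x))$ and the explicit form \eqref{eq:flip} of $c(\xi,\xi^x)$ into $\mathcal L p_1(\xi)$. Using $(1-2\xi(x))\xi(x) = -\xi(x)$ and $(1-2\xi(x))\hat\xi(x) = \hat\xi(x)$ (both from $\xi(x) \in \{0,1\}$), this reduces to
\[
\mathcal L p_1(\xi) = \sum_{x,y}\pi(x)q(x,y)\bigl[\hat\xi(x)\xi(y) - \xi(x)\hat\xi(y)\bigr].
\]
Expanding $\hat\xi(x) = 1 - \xi(x)$ in the first term and using the stationarity identity $\sum_x \pi(x)q(x,y) = 0$ turns that term into $-\sum_{x,y}\pi(x)q(x,y)\xi(x)\xi(y)$. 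Expanding $\hat\xi(y) = 1-\xi(y)$ in the second term and using the $Q$-matrix row-sum identity $\sum_y q(x,y) = 0$ produces the identical expression, so the two cancel and $\mathcal L p_1\equiv 0$.

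For part (iii), I would apply the carré-du-champ identity
\[
\mathcal L(f^2)(\xi) = 2f(\xi)\mathcal L f(\xi) + \sum_x c(\xi,\xi^x)\bigl(f(\xi^x)-f(\xi)\bigr)^2
\]
with $f = p_1$. Part (i) kills the first term, and $(p_1(\xi^x)-p_1(\xi))^2 = \pi(x)^2$ because $(1-2\xi(x))^2=1$. Hence
\[
\mathcal L p_1^2(\xi) = \sum_{x,y}\pi(x)^2 q(x,y)\bigl[\xi(x)\hat\xi(y) + \hat\xi(x)\xi(y)\bigr],
\]
where diagonal contributions drop because $\xi(x)\hat\xi(x) = 0$. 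Matching this sum against the definitions \eqref{eq:piD}--\eqref{eq:wa1-2} and the normalization $\pid = \nu(\1)$ identifies it as $\pid\bigl[p_{10}(\xi) + p_{01}(\xi)\bigr]$, which is precisely the compensator required for (iii).

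Part (ii) is then immediate: $p_1 p_0 = p_1 - p_1^2$, so the process in (ii) is the difference of the martingales in (i) and (iii). The only point that needs any care is the bookkeeping in (i), where the two symmetric-looking sums must be collapsed using two \emph{different} identities — the stationarity $\pi q = 0$ and the row-sum condition $q\1 = 0$ — before cancellation becomes visible; this is the only mildly delicate step in the whole argument.
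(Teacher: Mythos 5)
Your proof is correct and follows essentially the same route as the paper: a direct generator computation showing $\mathcal Lp_1\equiv 0$ and that the squared increment $\big(p_1(\xi^x)-p_1(\xi)\big)^2=\pi(x)^2$ produces the compensator $\pid\,[p_{10}+p_{01}]$. The only (cosmetic) difference is that you compute $\mathcal L(p_1^2)$ directly via the carr\'e du champ and deduce (ii) from $p_1p_0=p_1-p_1^2$, whereas the paper computes $\mathcal L(p_1p_0)$ directly and deduces (iii) from the same identity.
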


\begin{proof}
Recall that the generator $\mathcal L$ and the flip rates of the voter model $(\xi_t)$ are given by (\ref{eq:gen}) and (\ref{eq:flip}), respectively. In the following, we will show
\begin{align}
\mathcal Lp_1&\equiv 0,\label{eq:Lp1}\\
\mathcal L(p_1p_0)&\equiv -\pid (p_{10}+p_{01}).\label{eq:p0p1}
\end{align}
Then our assertions for the processes in (i) and (ii) follow from these and a standard result of Markov processes. 
The fact that the process in (iii) is a martingale then follows from the analogous properties of the processes in (i) and (ii), since
$p_1^2 = p_1- p_1p_0$.

We first show (\ref{eq:Lp1}). Plainly
\begin{align}\label{eq:p1diff}
p_1(\xi^x)-p_1(\xi)=
\pi(x)\big[\hxi(x)-\xi(x)\big], 
\end{align}
and thus by (\ref{eq:gen}) we get
\begin{align*}
\mathcal Lp_1(\xi)=&\sum_{x\in E}\dxi(x)\sum_{y\in E}
  \xi(y)q(x,y)\pi(x)-\sum_{x\in E}
\xi(x)\sum_{y\in E}\dxi(y)q(x,y)\pi(x)\\ 
=&\sum_{x,y\in E} \xi(y)\pi(x)q(x,y)-\sum_{x,y\in E}\xi(x)\xi(y)\pi(x)q(x,y)\\
&-\sum_{x,y\in E}\xi(x)\pi(x)q(x,y)+\sum_{x,y\in E} \xi(x)\xi(y)\pi(x)q(x,y)
=0,
\end{align*}
because $\sum_{y\in E} q(x, y) = 0$ and $\sum_{x\in E}\pi(x)q(x, y) = 0$ for all $y \in E$. Hence, (\ref{eq:Lp1}) follows, and the density process $\big(p_1(\xi_t)\big)$ is a martingale.

Next, to show (\ref{eq:p0p1}), we note that for any $x\in E$,  
\begin{align*}
p_1(\xi^x)p_0&(\xi^x)-p_1(\xi)p_0(\xi)\\
&=[p_1(\xi^x)-p_1(\xi)]\cdot [p_0(\xi^x)-p_0(\xi)]+p_0(\xi)[p_1(\xi^x)-p_1(\xi)]\\
&\qquad +p_1(\xi)[p_0(\xi^x)-p_0(\xi)]\\
&=-\pi(x)^2+p_0(\xi)[p_1(\xi^x)-p_1(\xi)]
+p_1(\xi)[p_0(\xi^x)-p_0(\xi)].
\end{align*}
where we have used (\ref{eq:p1diff}) and the analogue
$p_0(\xi^x)-p_0(\xi)=\pi(x)[\xi(x)-\hxi(x)]$ in the last line.
Since $\cL p_1=\cL p_0=0$, the last equality implies that
\begin{align*}
\sum_{x\in E}c(x,\xi)&[p_1(\xi^x)p_0(\xi^x)-p_1(\xi)p_0(\xi)]\\
&=-\sum_{x,y\in
  E}[\xi(x)\hxi(y)+\hxi(x)\xi(y)]q(x,y)\pi(x)^2\\
&= -\pid [p_{10}(\xi) + p_{01}(\xi)],
\end{align*}
where the last equality follows from the definitions (\ref{eq:piD})--(\ref{eq:wa1-2}). This gives (\ref{eq:p0p1}), and our assertion for (ii) is proved.
The proof is complete.
\end{proof}

\medskip

Recall that $\mu_u$ denotes the Bernoulli product measure on $E$ with density $\mu_u(\xi(x)=1)=u$. 
\begin{cor}\label{cor:marts}
For any $\gamma,t\in (0,\infty)$ and initial configuration $\xi\in\{0,1\}^E$, the martingale $\big(p_{1}(\xi_{\gamma t})\big)$ under $\P_\xi$ has
 predictable quadratic variation process
\begin{align}
  \langle p_1(\xi_{\gamma \cdot})\rangle_t&=
  \gamma\pid\int_0^t\left[p_{10}(\xi_{\gamma
      s})+p_{01}(\xi_{\gamma s})\right]ds.\label{eq:psvp2}
\end{align}
Also for any $u\in [0,1]$, we have
\begin{equation}\label{eq:time0}
\E_{\mu_u}[p_1(\xi_{\gamma t})p_0(\xi_{\gamma t
})]=u(1-u)(1-\piD)-\gamma\,
\pid\int_0^t \E_{\mu_u}[p_{10}(\xi_{\gamma
  s})+p_{01}(\xi_{\gamma s})]ds.
\end{equation}
\end{cor}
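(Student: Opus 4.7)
\textbf{Proof plan for Corollary~\ref{cor:marts}.}

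For the predictable quadratic variation formula \eqref{eq:psvp2}, the natural starting point is part (iii) of Proposition~\ref{prop:marts}, which identifies
$p_1^2(\xi_t) - \pid\int_0^t[p_{10}(\xi_s)+p_{01}(\xi_s)]\,ds$
as a $\P_\xi$-martingale. Combined with part (i), which says $(p_1(\xi_t))$ is itself a bounded martingale, this identifies $\pid\int_0^t[p_{10}(\xi_s)+p_{01}(\xi_s)]\,ds$ as the predictable compensator of $(p_1(\xi_t)^2)$, i.e.\ as $\langle p_1(\xi_\cdot)\rangle_t$. To recover the rescaled version, I would apply the standard fact that if $M$ is a martingale with continuous predictable quadratic variation $\langle M\rangle_t=\int_0^t \phi_s\,ds$, then the time-changed process $(M_{\gamma t})$ (with respect to the time-changed filtration) has $\langle M_{\gamma\cdot}\rangle_t=\langle M\rangle_{\gamma t}$, and the change of variables $s=\gamma u$ converts $\int_0^{\gamma t}\phi_s\,ds$ into $\gamma\int_0^t \phi_{\gamma u}\,du$. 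Applied with $\phi_s=\pid[p_{10}(\xi_s)+p_{01}(\xi_s)]$, this yields exactly \eqref{eq:psvp2}.

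For the identity \eqref{eq:time0}, I would use part (ii) of Proposition~\ref{prop:marts}: the process
$p_1(\xi_t)p_0(\xi_t)+\pid\int_0^t[p_{10}(\xi_s)+p_{01}(\xi_s)]\,ds$
is a bounded $\P_\xi$-martingale for every starting configuration, hence also a $\P_{\mu_u}$-martingale by averaging over the initial distribution. Taking expectations under $\P_{\mu_u}$ and equating with the value at time $0$ gives
\[
\E_{\mu_u}[p_1(\xi_t)p_0(\xi_t)]+\pid\int_0^t\E_{\mu_u}[p_{10}(\xi_s)+p_{01}(\xi_s)]\,ds=\E_{\mu_u}[p_1(\xi_0)p_0(\xi_0)].
\]
Then I would compute the right-hand side directly from the product structure of $\mu_u$: using $\E_{\mu_u}[\xi(x)]=u$ and $\E_{\mu_u}[\xi(x)\xi(y)]=u^2$ for $x\neq y$ and $=u$ for $x=y$, a short calculation gives $\E_{\mu_u}[p_1(\xi_0)]=u$ and $\E_{\mu_u}[p_1(\xi_0)^2]=u^2(1-\piD)+u\piD$, so that $\E_{\mu_u}[p_1(\xi_0)p_0(\xi_0)]=u(1-u)(1-\piD)$. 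Finally, replacing $t$ by $\gamma t$ and changing variables $s=\gamma u$ in the time integral produces the claimed factor of $\gamma$.

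There is essentially no obstacle here: the corollary is a direct reading of Proposition~\ref{prop:marts}, plus the elementary time-change relation for predictable quadratic variations and a one-line Bernoulli moment computation. The only place where a little care is required is in ensuring the time-change step is applied to the filtration $(\mathcal F_{\gamma t})$ so that the compensator formula transfers cleanly; this is automatic since the integrand is adapted and bounded.
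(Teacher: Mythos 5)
Your proposal is correct and follows essentially the same route as the paper: \eqref{eq:psvp2} is read off from parts (i) and (iii) of Proposition~\ref{prop:marts} together with the standard characterization of the predictable quadratic variation and a deterministic time change, and \eqref{eq:time0} comes from taking expectations in the martingale of part (ii) and randomizing the initial configuration by $\mu_u$. Your explicit Bernoulli moment computation giving $\E_{\mu_u}[p_1(\xi_0)p_0(\xi_0)]=u(1-u)(1-\piD)$ is exactly the step the paper leaves implicit, and it is carried out correctly.
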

\begin{proof}
The equation (\ref{eq:psvp2}) follows readily from Proposition~\ref{prop:marts} for the process in (iii) and the standard characterization of predictable quadratic variations (cf. \cite{JS:LT}). Similarly, by (ii) in the same proposition, we have
\begin{align}
\E_{\xi}[p_1(\xi_{\gamma t})p_0(\xi_{\gamma t
})]&=p_1(\xi)p_0(\xi)-\gamma\,
\pid\int_0^t \E_\xi[p_{10}(\xi_{\gamma
  s})+p_{01}(\xi_{\gamma s})]ds,\label{eq:marts2}
\end{align}
and so a randomization of the initial configuration $\xi$ by $\mu_u$ leads to (\ref{eq:time0}).
\end{proof}

The rest of this section is devoted to interpreting the above results by coalescing Markov chains, and now
we recall duality. 
Using the coalescing Markov chains
$(\hat{X}^x,x\in E)$ introduced in Section~\ref{sec:MR}, we can formulate 
the duality equation for voter models  (see Chapter~V of
\cite{L:IPS} or Section 4.3 of \cite{L:CMP}) as
\begin{equation}\label{eq:duality}
\E_\eta\left[\prod_{x\in F}\xi_t(x)\right]=\mathbf
E\left[\prod_{x\in F}\eta\big(\hat{ X}^x_t\big)\right]\quad \forall
\;\eta\in\{0,1\}^E, \;t\in\R_+ 
\end{equation}
for any nonempty subset $F$ of $E$.
The readers will see later on that the duality formula becomes particularly tractable for a voter model with initial
law $\mu_u$.

We will make frequent use of a special case of
\eqref{eq:duality} stated as follows. For convenience, let $(X^x_t,x\in E)$ be another system of $q$-Markov chains with $Q$-matrix
$q$ and $X^x_0=x$, but now consist of independent chains.
We define the first meeting
times of $X^x$ and $X^y$ by
\[
M_{x,y}= \inf\{t\ge 0: X^x_t=X^y_t\}, \quad x,y\in E.
\]
Then \eqref{eq:duality} implies
\begin{equation}\label{eq:duality2}
\E_\xi\big[\xi_t(x)\hxi_t(y)\big] = \mathbf{E}\big[
\xi(X^x_t)\hxi(X^y_t); M_{x,y}>t\big].
\end{equation}

Next, we recall that $(U,U')$ has law $\pi\otimes\pi$, and now
introduce $(V,V')$ with law
\begin{align}\label{eq:distVV}
\mathbf P(V=a,V'=b)&\equiv \bpi(a,b), \quad a,b\in E
\end{align}
(recall the definition of $\bpi$ from (\ref{eq:piD1})).
We assume, in addition, that these random elements $(U,U')$ and $(V,V')$ are
independent of the system $(X^x;x\in E)$.

\begin{prop}\label{prop:pairapprox}
For any $\gamma, t> 0$ and initial configuration $\xi\in \{0,1\}^E$,
\begin{align}
\E_{\xi}[p_1(\xi_{\gamma t})p_0(\xi_{\gamma_t})]&=
\mathbf E[\xi(X^U_{\gamma t})\hxi(X^{U'}_{\gamma t}) ;
M_{U,U'}>\gamma t]\label{eq:p1p0xi},\\
\E_{\xi}[p_{10}(\xi_{\gamma t})]&=
\mathbf E[\xi(X^V_{\gamma t})\hxi(X^{V'}_{\gamma t});
M_{V,V'}>\gamma t]\label{eq:p10xi},\\
\E_{\xi}[p_{01}(\xi_{\gamma t})]&=
\mathbf E[\hxi(X^V_{\gamma t})\xi(X^{V'}_{\gamma t});
M_{V,V'}>\gamma t]\label{eq:p01xi} .
\end{align}
\end{prop}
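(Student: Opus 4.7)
The three identities all have the same shape: a $\pi\otimes\pi$-weighted (resp.\ $\bpi$-weighted) sum over pairs $(x,y)\in E\times E$ of the quantity $\xi_{\gamma t}(x)\hxi_{\gamma t}(y)$ (or its swap), evaluated under $\E_\xi$. So my plan is to handle all three at once by a single computation: expand the left-hand side by definition, pull the expectation inside the sum, apply the pairwise duality identity~\eqref{eq:duality2} site-by-site, and then recognize the resulting weighted sum as an expectation against the prescribed distribution of the random pair of starting points.

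Concretely, for \eqref{eq:p1p0xi} I would write
\[
p_1(\xi_{\gamma t})p_0(\xi_{\gamma t})=\sum_{x,y\in E}\pi(x)\pi(y)\,\xi_{\gamma t}(x)\hxi_{\gamma t}(y),
\]
take $\E_\xi$, apply \eqref{eq:duality2} term by term to get
\[
\E_\xi[p_1(\xi_{\gamma t})p_0(\xi_{\gamma t})]=\sum_{x,y\in E}\pi(x)\pi(y)\,\mathbf{E}\bigl[\xi(X^x_{\gamma t})\hxi(X^y_{\gamma t});M_{x,y}>\gamma t\bigr],
\]
and then, using that $(U,U')\sim\pi\otimes\pi$ is independent of the system $(X^x;x\in E)$, conclude that this double sum equals $\mathbf{E}[\xi(X^U_{\gamma t})\hxi(X^{U'}_{\gamma t});M_{U,U'}>\gamma t]$. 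The proofs of \eqref{eq:p10xi} and \eqref{eq:p01xi} are word-for-word the same, starting from the definitions \eqref{eq:wa1-1}--\eqref{eq:wa1-2} of $p_{10}$ and $p_{01}$, with $\pi\otimes\pi$ replaced by $\bpi$ and hence $(U,U')$ replaced by $(V,V')$ as prescribed by~\eqref{eq:distVV}; for \eqref{eq:p01xi} one simply applies \eqref{eq:duality2} to the pair with the roles of $\xi$ and $\hxi$ interchanged.

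There is really no hard step. The only point that requires any care is the justification of \eqref{eq:duality2}, but this is granted once one expands $\hxi_t(y)=1-\xi_t(y)$, invokes the voter-coalescing duality~\eqref{eq:duality} with $F=\{x,y\}$ and with $F=\{x\}$, and observes that on $\{M_{x,y}\le t\}$ the coalescing chains satisfy $\hat{X}^x_t=\hat{X}^y_t$, forcing $\xi(\hat{X}^x_t)\hxi(\hat{X}^y_t)=0$, while on $\{M_{x,y}>t\}$ the pair $(\hat{X}^x_t,\hat{X}^y_t)$ has the same law as $(X^x_t,X^y_t)$. Because the excerpt already records \eqref{eq:duality2}, I would simply cite it rather than rederive it. The role of the independence assumption between $(U,U'), (V,V')$ and the chain system is to permit Fubini-style integration that produces the announced form of the right-hand sides.
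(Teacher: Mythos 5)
Your proposal is correct and follows essentially the same route as the paper: expand the weighted sums defining $p_1p_0$, $p_{10}$, $p_{01}$, apply the pairwise duality identity \eqref{eq:duality2} term by term, and resum against the laws of $(U,U')$ and $(V,V')$. The extra remark justifying \eqref{eq:duality2} itself is sound but not needed, since the paper also simply cites that identity.
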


\begin{proof}
By the duality equation \eqref{eq:duality2} and the
definitions of $p_1,p_0$ and $(U,U')$, we have
\begin{align*}
\E_{\xi}[p_1(\xi_{\gamma t})p_0(\xi_{\gamma
  t})]&= \sum_{x,y\in
  E}\pi(x)\pi(y)\mathbf{E} [\xi(X^x_{\gamma t})
\hxi(X^y_{\gamma t}); M_{x,y}>t]\\
&= \mathbf E[\xi(X^U_{\gamma t})\hxi(X^{U'}_{\gamma t}); 
M_{U,U'}>\gamma t],
\end{align*}
which proves \eqref{eq:p1p0xi}. The equations
\eqref{eq:p10xi} and \eqref{eq:p01xi} can be derived in the
same fashion by using the definition (\ref{eq:distVV}) of $(V,V')$.
\end{proof}

We point out that (\ref{eq:p1p0xi})--(\ref{eq:p01xi}) are
 closely related to the tail distributions of some particular meeting times. 
By \eqref{eq:p10xi} and
  \eqref{eq:p01xi}, we have
\begin{equation}\label{eq:p1001bnd}
\sup_{\xi\in \{0,1\}^E} \E_{\xi}[p_{10}(\xi_{\gamma t})+p_{01}(\xi_{\gamma t})]
\le 2 \mathbf P(M_{V,V'}>\gamma t).
\end{equation}
Moreover, if we start the voter model with the product measure $\mu_u$ for $u\in[0,1]$, then 
Proposition~\ref{prop:pairapprox} implies
\begin{equation}\label{eq:p1p0mu}
\E_{\mu_u}[p_1(\xi_{\gamma
  t})p_0(\xi_{\gamma_t})]=u(1-u)\mathbf P(M_{U,U'}>\gamma
t)
\end{equation}
and
\begin{equation}\label{eq:p10mu}
\E_{\mu_u}[p_{10}(\xi_{\gamma
  t})]=\E_{\mu_u}[p_{01}(\xi_{\gamma t})]=u(1-u)\mathbf
P(M_{V,V'}>\gamma t) .
\end{equation}

As a particular application of (\ref{eq:p1p0mu}) and (\ref{eq:p10mu}), we give simple proofs for some known results in Markov chain theory in Corollary~\ref{cor:MUV} below (see Section 5.3 of Chapter 3 in \cite{AF:MC}).
\medskip

\begin{cor}\label{cor:MUV}
The tail distributions of $M_{U,U'}$ and $M_{V,V'}$ are related by the formula: for any $\gamma, t>0$,
\begin{equation}\label{eq:MUV}
\mathbf{P}(M_{U,U'}>\gamma
t) =1-\piD -2\gamma \pid\int_0^t \mathbf P(M_{V,V'}>\gamma s)ds.
\end{equation}
Moreover, we have
\begin{align}
\mathbf E[M_{V,V'}]&=\frac{1-\piD}{2\pid},\label{eq:MVVmom}\\
\mathbf E[M_{U,U'}]&=\pid\mathbf E[M_{V,V'}^2].\label{eq:MUUmom}
\end{align}
\end{cor}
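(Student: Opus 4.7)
The plan is to combine the Markov-chain identities (\ref{eq:p1p0mu}) and (\ref{eq:p10mu}) with the deterministic relation (\ref{eq:time0}) from Corollary~\ref{cor:marts}, which was derived from the martingale property of $(p_1 p_0)(\xi_t)$. Specifically, I would substitute the probabilistic expressions
\[
\E_{\mu_u}[p_1(\xi_{\gamma t})p_0(\xi_{\gamma t})] = u(1-u)\mathbf P(M_{U,U'}>\gamma t),
\qquad
\E_{\mu_u}[p_{10}(\xi_{\gamma s}) + p_{01}(\xi_{\gamma s})] = 2u(1-u)\mathbf P(M_{V,V'}>\gamma s)
\]
into (\ref{eq:time0}) for some fixed $u\in(0,1)$ (say $u=1/2$). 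Cancelling the common factor $u(1-u)$ yields (\ref{eq:MUV}) immediately.

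Next, to obtain (\ref{eq:MVVmom}), I would let $t\to\infty$ in (\ref{eq:MUV}). Since $q$ is irreducible and $E$ is finite, the two independent $q$-Markov chains meet almost surely, so $\mathbf P(M_{U,U'}>\gamma t)\to 0$. After the change of variable $r=\gamma s$ in the integral this gives
\[
0 = (1-\piD) - 2\pid \int_0^\infty \mathbf P(M_{V,V'}>r)\,dr = (1-\piD) - 2\pid\,\mathbf E[M_{V,V'}],
\]
from which the formula for $\mathbf E[M_{V,V'}]$ follows.

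For (\ref{eq:MUUmom}), I would set $\gamma=1$ in (\ref{eq:MUV}) and use (\ref{eq:MVVmom}) to rewrite $(1-\piD) = 2\pid\int_0^\infty \mathbf P(M_{V,V'}>s)\,ds$, producing the tail representation
\[
\mathbf P(M_{U,U'}>t) = 2\pid \int_t^\infty \mathbf P(M_{V,V'}>s)\,ds.
\]
Integrating over $t\in(0,\infty)$ and applying Fubini (legitimate since the integrand is nonnegative) swaps the order of integration and yields
\[
\mathbf E[M_{U,U'}] = 2\pid \int_0^\infty s\,\mathbf P(M_{V,V'}>s)\,ds = \pid\,\mathbf E[M_{V,V'}^2],
\]
using the standard identity $\mathbf E[M_{V,V'}^2]=2\int_0^\infty s\,\mathbf P(M_{V,V'}>s)\,ds$.

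There is no real obstacle here: the only subtlety is justifying $\mathbf P(M_{U,U'}>\gamma t)\to 0$ and the Fubini swap, both of which are routine given irreducibility on the finite state space $E$ and the nonnegativity of the integrand. The heart of the argument is simply the identification of the two sides of (\ref{eq:time0}) via (\ref{eq:p1p0mu})--(\ref{eq:p10mu}); the moment formulas then fall out by taking limits and integrating the tail identity.
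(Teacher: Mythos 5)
Your proposal is correct and follows essentially the same route as the paper: substitute \eqref{eq:p1p0mu} and \eqref{eq:p10mu} into \eqref{eq:time0} and cancel $u(1-u)$ to get \eqref{eq:MUV}, let $t\to\infty$ (using irreducibility on the finite state space) for \eqref{eq:MVVmom}, and integrate the resulting tail identity with a Fubini swap for \eqref{eq:MUUmom}. No gaps.
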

\begin{proof}
We start with \eqref{eq:MUV}. 
If we fix $u\in(0,1)$, and plug \eqref{eq:p1p0mu} and
\eqref{eq:p10mu} into \eqref{eq:time0}, then 
cancelling the factor $u(1-u)$ gives
\eqref{eq:MUV}. We remark that \eqref{eq:MUV} can be alternatively
derived by a standard Markov chain ``last time''
decomposition (see Section A.2 of \cite{CDP}), and leave the details to the readers.

We then consider the two equalities (\ref{eq:MVVmom}) and (\ref{eq:MUUmom}).
Since $q$ is irreducible, the meeting time $M_{x,y}$ is
finite a.s. for any $x,y\in E$.  Thus,  
by setting $\gamma=1$ and passing $t\lra\infty$ in the identity 
\eqref{eq:MUV}, we deduce \eqref{eq:MVVmom}.  
To obtain the second equality (\ref{eq:MUUmom}), we set
$\gamma=1$ and integrate both sides of \eqref{eq:MUV}:
\begin{align*}
\mathbf E[M_{U,U'}]&=2\pid\int_0^\infty\left( \frac{1-\piD}{2\pid}-\int_0^t \mathbf P(M_{V,V'}>s)ds\right)dt\\
&=2\pid\int_0^\infty \int_t^\infty \mathbf P(M_{V,V'}>s)dsdt\\
&=2\pid\int_0^\infty s\mathbf P(M_{V,V'}>s)ds\\
&=2\pid \frac{\mathbf E[M_{V,V'}^2]}{2}=\pid\mathbf E[M^2_{V,V'}],
\end{align*}
where (\ref{eq:MVVmom}) is used in the second equality below.
We have proved (\ref{eq:MUUmom}). The proof is complete.
\end{proof}

\begin{rmk} 
(1) Some useful consequences of Corollary~\ref{cor:MUV} are
the following. First, \eqref{eq:MVVmom} and Markov's inequality imply that
for any $\gamma,t>0$, 
\begin{align}
\label{eq:VVtailbound}
&2\gamma \pid \mathbf{P}(M_{V,V'}>\gamma t) \le
\dfrac{1-\piD}{t}.
\end{align}
Second, passing $t\lra\infty$ in \eqref{eq:MUV}, we obtain
\begin{equation}
 \label{eq:MUV2}
2\gamma \pid\int_0^\infty \mathbf P(M_{V,V'}>\gamma s)ds \le 1.
\end{equation}
Finally, from \eqref{eq:MVVmom},
\eqref{eq:MUUmom} and the Cauchy-Schwartz inequality we obtain a useful
lower bound of $\tmeet=\mathbf E[M_{U,U'}]$:
\begin{equation}\label{eq:meetlowerbound}
\tmeet \ge \frac1{\pid} \,\left( \frac{1-\piD}2\right)^2.
\end{equation}
See also Section 5.1 of \cite{AF:MC} for a similar inequality.
\medskip

\noindent (2) If $q$ is of the form (\ref{def:simple}) for a symmetric probability matrix $p$ with zero diagonal, then $\pid =
1/|E|$ and $\mathbf{P}\big((V,V')=(a,b)\big)=\pi(a)q(a,b)$. In this
case, \eqref{eq:MVVmom} and \eqref{eq:MUUmom} reduce to 
\[
\mathbf{E}[M_{V,V'}]=
\frac{|E|-1}{2}\quad\mbox{ and }\quad
\mathbf E[M^2_{V,V'}]=|E|\cdot \mathbf{E} [M_{U,U'}],
\]
respectively.
\qed
\end{rmk}

\section{Pairwise coalescence times}\label{sec:paircoal}
Throughout this section we take an arbitrary sequence of
irreducible Markov chains defined by $Q$-matrices $(q^{(n)},E_n)_{n\in \bN}$.
With $\pi^{(n)}$ being the stationary distribution of $q^{(n)}$, we write
\begin{align*}
\piDn=&\sum_{x\in E_n}\pi^{(n)}(x)^2,\\
\pidn(x,y)\equiv &\pi^{(n)}(x)^2q^{(n)}(x,y)\1_{x\neq y},\quad
\bar{\nu}_n=\frac{\nu_n}{\nu_n(\1)}
\end{align*}
as before.
Let $(\xi_s)$
with law $\P^{(n)}_\lambda$ denote the voter model defined by the voting kernel
$q^{(n)}$ with initial distribution $\lambda$. By
convention, $\P^{(n)}_\xi=\P^{(n)}_{\delta_\xi}$ for delta
measures $\delta_\xi$.

In this section, we consider the density processes of these voter models and 
study the necessary and sufficient conditions for the convergence of their second moments to the second moment of the Wright-Fisher diffusion  
$\big(Y,(\P_u)_{u\in [0,1]}\big)$ which is defined by the differential operator $\mathcal G$ in (\ref{eq:FW}). 
Our main result in this section is
Theorem~\ref{thm:2ndmom} below. In the following,
let $\mathbf e$ denote the exponential random
variable with mean $1$, and $\ms L(X)$ denote the law of a random element $X$.

\begin{thm}\label{thm:2ndmom}
Assume that 
\begin{align}\label{eq:Delta}
\lim_{n\to\infty}\piDn=\Delta \in [0,1),
\end{align}
and let $(\gamma_n)$ be a sequence of
constants in $(0,\infty)$. Then the following conditions are equivalent.
\begin{enumerate}
\item [\rm (1)] For some $u\in (0,1)$,
\begin{align}
\lim_{n\to\infty}\E^{(n)}_{\mu_u}[p_1(\xi_{\gamma_n
  t})p_0(\xi_{\gamma_n
  t})]=(1-\Delta)\E_u[Y_t(1-Y_t)]\quad\forall\; t\in\R_+.\label{eq:p1p0WF}
\end{align}
\item [\rm (2)] For all $t\in \R_+$, 
\[
\lim_{n\to\infty}2\gamma_n
\pidn\int_0^t \mathbf P^{(n)}(M_{V,V'}>\gamma_ns)ds
= (1-\Delta)\left(1-e^{-t}\right).
\]
\item [\rm (3)] For all $\mu\in \R_+$, 
\[
\lim_{n\to\infty}2\gamma_n \pidn\mathbf E^{(n)}\left[1-e^{-\mu
      M_{V,V'}/\gamma_n}\right] = 
 (1-\Delta)\E[1-e^{-\mu \mathbf
    e}] .
\]
\item [\rm (4)]
\[
\ms L \left(\dfrac{M_{U,U'}}{\gamma_n}\right) \convdn(1-\Delta)\cdot \ms
L(\mathbf e)+\Delta \cdot\delta_0 .
\]
\end{enumerate}
Moreover, if any of these four conditions holds, then (\ref{eq:p1p0WF}) holds
for any $u\in [0,1]$. 
\end{thm}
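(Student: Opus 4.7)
The plan is to route all four conditions through condition~(2), exploiting the identities of Corollary~\ref{cor:MUV} and Proposition~\ref{prop:pairapprox} together with a one-line moment computation for the Wright-Fisher diffusion. Applying $\mathcal G$ to $f(x)=x(1-x)$ gives $\mathcal Gf=-f$, so
\[
\E_u[Y_t(1-Y_t)]=u(1-u)e^{-t},\qquad u\in[0,1],\ t\ge 0.
\]
This identifies the target in (1) as $u(1-u)(1-\Delta)e^{-t}$.

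\textbf{(1)$\Leftrightarrow$(2) and the final claim.} Combining the duality identity \eqref{eq:p1p0mu} with \eqref{eq:MUV} applied on the time-scale $\gamma_n$ gives the algebraic identity
\[
\E^{(n)}_{\mu_u}[p_1(\xi_{\gamma_n t})p_0(\xi_{\gamma_n t})]
=u(1-u)\Bigl(1-\piDn-2\gamma_n\pidn\!\int_0^t\!\mathbf P^{(n)}(M_{V,V'}>\gamma_n s)\,ds\Bigr).
\]
Dividing by $u(1-u)>0$ (which requires only $u\in(0,1)$) and using $\piDn\to\Delta$ turns (1) into (2) and vice versa. Because the right-hand side factors as $u(1-u)$ times a quantity that is free of $u$, the forward implication in fact yields (1) for every $u\in(0,1)$, and trivially for $u\in\{0,1\}$, which proves the concluding sentence of the theorem.

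\textbf{(2)$\Leftrightarrow$(3).} Set
\[
F_n(t)=2\gamma_n\pidn\!\int_0^t\!\mathbf P^{(n)}(M_{V,V'}>\gamma_n s)\,ds.
\]
Letting $t\to\infty$ in \eqref{eq:MUV} and using that $M_{U,U'}$ is a.s.\ finite yields $F_n(\infty)=1-\piDn\to 1-\Delta$, so $dF_n$ is a sub-probability measure on $[0,\infty)$ of bounded total mass. A Fubini/integration-by-parts calculation rewrites the left-hand side of~(3) as $\int_0^\infty e^{-\mu t}\,dF_n(t)$, while the right-hand side equals $\int_0^\infty e^{-\mu t}\,dF(t)$ with $F(t)=(1-\Delta)(1-e^{-t})$. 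Thus (2)$\Rightarrow$(3) is bounded convergence applied to $F_n\to F$ pointwise. Conversely, normalizing $dF_n$ by $(1-\piDn)^{-1}$ produces probability measures whose Laplace transforms converge to that of $\mathrm{Exp}(1)$; the continuity theorem for Laplace transforms then gives weak convergence, and since $F$ is continuous, this upgrades to pointwise convergence of $F_n$ to $F$, i.e., (2).

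\textbf{(2)$\Leftrightarrow$(4).} By \eqref{eq:MUV},
\[
\mathbf P^{(n)}\bigl(M_{U,U'}/\gamma_n\le t\bigr)=\piDn+F_n(t).
\]
The law $(1-\Delta)\,\ms L(\mathbf e)+\Delta\,\delta_0$ has distribution function $t\mapsto\Delta+(1-\Delta)(1-e^{-t})$, which is continuous on $(0,\infty)$, so (4) is equivalent to $\piDn+F_n(t)\to\Delta+(1-\Delta)(1-e^{-t})$ for every $t>0$, hence, using $\piDn\to\Delta$, to (2). I expect the sole delicate step to be the Laplace-transform direction in (3)$\Rightarrow$(2): one must handle the sub-probability nature of $dF_n$ (since $F_n(\infty)<1$), which is resolved cleanly by the renormalization above together with $\piDn\to\Delta\in[0,1)$.
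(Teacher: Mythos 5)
Your proof is correct, and for three of the four equivalences --- $(1)\Leftrightarrow(2)$, $(2)\Leftrightarrow(4)$, $(2)\Rightarrow(3)$ --- together with the final claim about all $u\in[0,1]$, it coincides with the paper's argument (same use of \eqref{eq:p1p0mu}, \eqref{eq:MUV}, the identity $\E_u[Y_t(1-Y_t)]=u(1-u)e^{-t}$, and dominated convergence via the uniform bound $F_n\le 1$ from \eqref{eq:MUV2}). Where you genuinely diverge is in $(3)\Rightarrow(2)$, which is also the step the paper calls ``more involved.'' The paper applies the continuity theorem not to $dF_n$ itself but to the exponentially tilted and renormalized densities $f_{n,\mu}(t)\propto e^{-\mu t}\mathbf P^{(n)}(M_{V,V'}>\gamma_n t)$; this yields convergence of the tilted partial integrals $\int_0^t e^{-\mu s}dF_n(s)$ for each fixed $\mu>0$, and the tilting must then be removed by a separate $\liminf$/$\limsup$ argument (the $\limsup$ half using Markov's inequality via \eqref{eq:MVVmom} and letting $\mu\downarrow 0$). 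You instead observe that letting $t\to\infty$ in \eqref{eq:MUV} gives the \emph{exact} total mass $F_n(\infty)=1-\piDn$ (equivalently \eqref{eq:MVVmom}), so that $dF_n/(1-\piDn)$ is an honest probability measure; since its Laplace transform converges to $1/(1+\mu)$, the continuity theorem delivers weak convergence to ${\rm Exp}(1)$ directly, and continuity of the limiting distribution function finishes the proof. Your route is shorter and avoids the double limit entirely; the only price is that you must use the exact identity $F_n(\infty)=1-\piDn$ rather than merely the bound $F_n(\infty)\le 1$, and you correctly flag that $\Delta<1$ is what keeps the normalization nondegenerate. One cosmetic slip: the left-hand side of (3) equals $\mu\int_0^\infty e^{-\mu t}\,dF_n(t)$, not $\int_0^\infty e^{-\mu t}\,dF_n(t)$; since the right-hand side of (3) carries the same factor $\mu>0$, this does not affect any of the equivalences you draw.
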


\begin{proof}[Proof of Theorem~\ref{thm:2ndmom}] We will
  prove this theorem in the order: 
${\rm (2)}\Longleftrightarrow {\rm (4)}$,
  ${\rm (1)}\Longleftrightarrow {\rm (2)}$, and finally ${\rm (2)}\Longleftrightarrow
  {\rm (3)}$. 

\medskip
\noindent {\bf Step 1:} ${\rm (2)}\Longleftrightarrow {\rm (4)}$. Note
that (4) is equivalent to
\[
\mathbf{P}^{(n)}(M_{U,U'}>\gamma_n t) \lra
(1-\Delta)e^{-t}, \quad \forall\; t>0,
\]
and so it follows immediately from
\eqref{eq:MUV} and (\ref{eq:Delta}) that (2) and (4) are equivalent.

\medskip
\noindent {\bf Step 2:} $(1)\Longleftrightarrow (2)$. Suppose that (1) holds for some
$u\in (0,1)$. Note that
\[
\E_u[Y_t(1-Y_t)]=u(1-u)e^{-t},\quad t\in \R_+.
\]
Using the foregoing equality, \eqref{eq:p1p0mu} and \eqref{eq:MUV}, we see that (1)
implies 
\begin{align*}
(1-\Delta)u(1-u)e^{-t}&=\lim_{n\to\infty}\E^{(n)}_{\mu_u}[p_1(\xi_{\gamma_n
  t})p_0(\xi_{\gamma_nt})]\\ 
&=u(1-u)\left[(1-\Delta) -\lim_{n\to\infty}2\gamma_n\pidn\int_0^t 
\mathbf P^{(n)}(M_{V,V'}>\gamma_ns)ds\right].
\end{align*}
Cancelling out the factor $u(1-u)$ on both sides of the foregoing equality, we obtain (2). For the converse,
we take any $u\in (0,1)$ and then reverse this argument.

\medskip
\noindent {\bf Step 3:} $(2)\Longleftrightarrow (3)$.
Let us make some elementary observations. First, for any
$(0,\infty)$-valued random variable $X$ and any $\mu>0$,  it is elementary to obtain
\begin{align}\label{eq:transid1}
\mathbf{E}\left[1-e^{-\mu X}\right] &=
\mu\int_0^\infty e^{-\mu s} \mathbf{P}(X>s)ds\\
&=
\mu^2\int_0^\infty e^{-\mu t}\int_0^t
\mathbf{P}(X>s)ds dt \label{eq:transid2} .
\end{align}
In addition, \eqref{eq:MUV2} gives
\begin{align}
2\gamma_n \pidn\int_0^\infty \mathbf P^{(n)}(M_{V,V'}>\gamma_n s)ds
\le 1 ,\quad \forall\; n\in\bN.\label{eq:aux8}
\end{align}

Now assume that (2) holds. Taking
$X=M_{V,V'}/\gamma_n$ in \eqref{eq:transid2}, we have for any $\mu>0$
\begin{multline}\label{eq:auxlim}
 2\gamma_n\pidn\mathbf{E}^{(n)}\left[1-e^{-\mu
      M_{V,V'}/\gamma_n}\right]
      =\mu^2
\int_0^\infty e^{-\mu t}\int_0^t
 2\gamma_n\pidn  \mathbf{P}^{(n)}(M_{V,V'}>\gamma_n s)dsdt.
\end{multline}
We pass $n\to\infty$ for both sides of the foregoing equality.
The bound \eqref{eq:aux8} justifying the use of the dominated convergence theorem, the limit of the right-hand side of (\ref{eq:auxlim}) equals
\[
(1-\Delta)\mu^2 \int_0^\infty e^{-\mu t}(1-e^{-t})dt 
= (1-\Delta) \E[1-e^{-\mu\mathbf{e}}],
\]
where the last equality follows from  \eqref{eq:transid2}
with $X=\mathbf{e}$. We have proved (3).

The proof that (3) implies (2) is more involved. Employing
\eqref{eq:transid1} again, we see that
(3) implies that for all $\lambda>0$,
\begin{equation}\label{eq:LAP}
2\gamma_n\pidn\int_0^\infty e^{-\lambda
  s}\mathbf P^{(n)}\left(M_{V,V'}>\gamma_n
  s\right)ds \to (1-\Delta)\int_0^\infty e^{-\lambda s}\P(\mathbf
e>s)ds 
\end{equation}
as $n\to\infty$. 
For any $\mu>0$ and $t>0$, define
\begin{align*}
f_{n,\mu}(t)=&\frac{e^{-\mu t}\mathbf P^{(n)}(M_{V,V'}>\gamma_nt)}{\int_0^\infty e^{-\mu s }\mathbf P^{(n)}(M_{V,V'}>\gamma_ns)ds},\\
f_\mu(t)=&\frac{e^{-\mu t}\P(\mathbf e>t)}{\int_0^\infty e^{-\mu s}\P(\mathbf e>s)ds}.
\end{align*}
Applying \eqref{eq:LAP} twice, we obtain for any $\lambda>0$,
\begin{align*}
\int_0^\infty e^{-\lambda t}f_{n,\mu}(t)dt
&= \dfrac{2\gamma_n\pidn \int_0^\infty e^{-(\lambda+\mu)
  t}\mathbf P^{(n)}\left(M_{V,V'}>\gamma_n
  t\right)dt}
{2\gamma_n\pidn \int_0^\infty e^{-\mu
  s}\mathbf P^{(n)}\left(M_{V,V'}>\gamma_n
  s\right)ds}\\
& \lra \int_0^\infty e^{-\lambda t}f_{\mu}(t)dt 
\end{align*}
as $n\to\infty$. Hence, we deduce from L\'evy's continuity
theorem 
for Laplace transforms of distributions on $\R_+$
(cf. Theorem 4.3 in \cite{Kall}) and (\ref{eq:LAP}) with $\lambda$ replaced by $\mu$ that 
\begin{align}\label{eq:convLap}
&\lim_{n\to\infty}2\gamma_n\pidn\int_0^t e^{-\mu
  s}\mathbf P^{(n)}(M_{V,V'}>\gamma_ns)ds\notag\\ 
&\hspace{1cm}=(1-\Delta)\int_0^t e^{-\mu s}\P(\mathbf
e>s)ds\quad  \forall\;\mu>0,t\geq 0.
\end{align}
Since $\mu>0$ is arbitrary, this gives
\begin{align}
\liminf_{n\to\infty}2\gamma_n \pidn\int_0^t \mathbf
P^{(n)}(M_{V,V'}>\gamma_ns)ds\geq (1-\Delta)\int_0^t
\P(\mathbf e>s)ds.\label{eq:aux6} 
\end{align}

To prove the converse inequality, we start with the decomposition
\begin{align}
\begin{split}
2\gamma_n  \pidn\int_0^t \mathbf P^{(n)}(M_{V,V'}>\gamma_ns)ds=
&2\gamma_n \pidn\int_0^t (1-e^{-\mu s})\mathbf P^{(n)}(M_{V,V'}>\gamma_ns)ds\\
&\quad +2\gamma_n \pidn\int_0^t e^{-\mu s}\mathbf
P^{(n)}(M_{V,V'}>\gamma_ns)ds.
\end{split}
\label{eq:aux9}
\end{align}
Fix any $\mu > 0$. By Markov's inequality and the elementary fact that
$1-e^{-\mu s}\le \mu s$ if $\mu s\ge 0$, the first integral
on the right-hand side above is bounded 
by 
\[
2\gamma_n\pidn \int_0^t \mu 
\dfrac{\mathbf{E}^{(n)}[M_{V,V'}]}{\gamma_n } ds
\le \mu t,
\]
where the last inequality is a consequence of \eqref{eq:MVVmom}.
Applying the foregoing inequality to \eqref{eq:aux9} and using
\eqref{eq:convLap}, we obtain
\[
\limsup_{n\to\infty}2\gamma_n  \pidn\int_0^t \mathbf
P^{(n)}(M_{V,V'}>\gamma_ns)ds \le
\mu t + (1-\Delta)\int_0^t e^{-\mu s}\P(\mathbf e>s)ds .
\]
If we let $\mu\lra 0$ in the above inequality and then
combine the result with \eqref{eq:aux6}, we obtain 
\begin{align}
2\gamma_n\pidn\int_0^t \mathbf P^{(n)}(M_{V,V'}>\gamma_ns)ds\lra (1-\Delta)\int_0^t\P(\mathbf e>s)ds,
\end{align}
which is (2). The proof of the theorem is now complete. 
\end{proof}

\begin{cor}\label{cor:tail}
Under the assumption (\ref{eq:Delta}),
any of (1)--(4) of Theorem~\ref{thm:2ndmom} implies that
\begin{align}\label{eq:exptail}
\lim_{n\to\infty} 2\gamma_n \pidn\mathbf P^{(n)}(M_{V,V'}>\gamma_n t)=(1-\Delta)e^{-t},\quad \forall \;t>0.
\end{align}
If in addition the limit
$m_\Delta = \lim_{n\to\infty}2\gamma_n \pidn$ exists,
then $ m_\Delta\in [1-\Delta,+\infty]$ and 
\begin{align*}
  {\ms L}\left( \frac{M_{V,V'}}{\gamma_n}\right) \convdn
\frac{1-\Delta}{m_\Delta}{\ms L}(\mathbf
  e)+\left(1-\frac{1-\Delta}{m_\Delta}\right)\delta_0
\end{align*}
with the convention that $\frac{1}{+\infty}=0$. 
\end{cor}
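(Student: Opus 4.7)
The plan is to deduce the pointwise limit \eqref{eq:exptail} from condition (2) of Theorem~\ref{thm:2ndmom}, which says that
\[
G_n(t)\equiv 2\gamma_n\pidn\int_0^t \mathbf P^{(n)}(M_{V,V'}>\gamma_n s)\,ds
\lra G(t)\equiv (1-\Delta)(1-e^{-t})\quad\forall\,t\geq 0.
\]
Since the limit $G$ is differentiable with continuous derivative $G'(t)=(1-\Delta)e^{-t}$, and each integrand $g_n(s)\equiv 2\gamma_n\pidn\mathbf P^{(n)}(M_{V,V'}>\gamma_n s)$ is \emph{monotonically non-increasing} in $s$, we can recover the derivative from integral convergence via a sandwich argument.

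Concretely, for a fixed $t>0$ and small $\epsilon\in(0,t)$, monotonicity gives
\[
\epsilon\,g_n(t)\;\leq\;\int_{t-\epsilon}^{t} g_n(s)\,ds= G_n(t)-G_n(t-\epsilon),\qquad
\epsilon\,g_n(t)\;\geq\;\int_t^{t+\epsilon}g_n(s)\,ds=G_n(t+\epsilon)-G_n(t).
\]
Passing to $\limsup$ and $\liminf$ using $G_n\to G$, and then sending $\epsilon\da 0$ using differentiability of $G$, we obtain $\limsup_n g_n(t)\leq G'(t)\leq\liminf_n g_n(t)$, which is precisely \eqref{eq:exptail}. This takes care of the first assertion.

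For the second assertion, suppose $m_\Delta=\lim_n 2\gamma_n\pidn$ exists in $[0,+\infty]$. The trivial bound $\mathbf P^{(n)}(M_{V,V'}>\gamma_n t)\leq 1$ yields $g_n(t)\leq 2\gamma_n\pidn$; letting $n\to\infty$ and then $t\da 0$ in \eqref{eq:exptail} gives $1-\Delta\leq m_\Delta$, hence $m_\Delta\in[1-\Delta,+\infty]$. When $m_\Delta\in[1-\Delta,\infty)$, dividing \eqref{eq:exptail} by $2\gamma_n\pidn$ yields
\[
\mathbf P^{(n)}\!\left(\frac{M_{V,V'}}{\gamma_n}>t\right)\lra \frac{1-\Delta}{m_\Delta}\,e^{-t},\quad\forall\, t>0,
\]
and since the right-hand side has total mass $(1-\Delta)/m_\Delta\leq 1$, the sequence of laws converges weakly to the claimed mixture of $\ms L(\mathbf e)$ (with weight $(1-\Delta)/m_\Delta$) and $\delta_0$ (with the complementary weight); the jump of the limiting distribution function at $0$ accounts for mass escaping to $0$. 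When $m_\Delta=+\infty$, the same division gives tail probabilities tending to $0$ for every $t>0$, so the laws converge to $\delta_0$, consistent with the convention $1/(+\infty)=0$.

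The only delicate point is the sandwich step promoting integral convergence to pointwise convergence; everything else is a direct manipulation of condition (2) and an elementary identification of a weak limit from its tail function. No further input from Theorem~\ref{thm:2ndmom} is needed beyond the equivalence of (1)--(4).
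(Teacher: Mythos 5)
Your proof is correct, and the key step is handled by a genuinely different (and more elementary) device than the one in the paper. Both arguments reduce \eqref{eq:exptail} to the integral convergence in condition (2) of Theorem~\ref{thm:2ndmom} together with the monotonicity of $s\mapsto 2\gamma_n\pidn\mathbf P^{(n)}(M_{V,V'}>\gamma_n s)$, but the paper executes this via compactness: it uses the bound \eqref{eq:VVtailbound} to turn $1-af_n$ into distribution functions on $[a,\infty)$, extracts a subsequential limit by Helly's selection principle, identifies that limit by integrating against condition (2), and concludes by uniqueness of the subsequential limit. Your sandwich of the difference quotients $\big(G_n(t)-G_n(t-\epsilon)\big)/\epsilon$ and $\big(G_n(t+\epsilon)-G_n(t)\big)/\epsilon$ against $g_n(t)$ reaches the same conclusion directly, needs no a priori uniform bound on $g_n$, and avoids the subsequence bookkeeping; what it uses instead is the differentiability of the limit $G(t)=(1-\Delta)(1-e^{-t})$, which is explicit here. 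Your treatment of the second assertion (the lower bound $m_\Delta\ge 1-\Delta$ via $g_n(t)\le 2\gamma_n\pidn$ and $t\da 0$, division by $2\gamma_n\pidn$, and identification of the weak limit from its tail function, with the atom at $0$ absorbing the missing mass) fills in details the paper declares immediate, and is correct, noting that $1-\Delta>0$ guarantees $m_\Delta>0$ so the division is legitimate.
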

\begin{proof}
We prove (\ref{eq:exptail}), from which the second assertion immediately follows. 
We may assume that (2) of Theorem~\ref{thm:2ndmom} holds.
For each $n\geq 1$, define 
\[
f_n(t)= 2\gamma_n \pidn\mathbf P^{(n)}(M_{V,V'}>\gamma_n t),\quad t\in (0,\infty).
\]
Then each $f_n$ is continuous and decreasing. Moreover, by
\eqref{eq:VVtailbound},
\[
0\leq f_n(t)\leq \frac{1}{t}.
\]
Now fix $a>0$ and define
$G_n(t)=1-af_n(t)$,
$t\in[a,\infty)$. By the above inequality, $(G_n)$ is a
sequence of distribution functions on
$[a,\infty)$. Hence by Helly's selection principle, there exist  
a subsequence $(G_{n_k})$ and
some (sub-)distribution function $G$ such that $G_{n_k}(t)\lra G(t)$ for every continuity
point $t\in (a,\infty)$ of $G$. Since $G$ is monotone, it
can have only countably many discontinuity points, and hence
for any 
$a<s<t$,
\[
\lim_{k\to\infty}\int_s^t G_{n_k}(u)du = \int_s^t G(u)du
\]
by dominated convergence.
It then follows from (2) of Theorem~\ref{thm:2ndmom} that
\[
\int_s^t G(u)du = (t-s) + a(1-\Delta)(e^{-t} -e^{-s}),
\]
which implies that $G(t)=1-a(1-\Delta)e^{-t}$ for every
continuity point $t\in (a,\infty)$ of $G$. Since $G$ is increasing,
this equality holds for any $t\in(a,\infty)$. Therefore, $G$ is continuous on $(a,\infty)$ and we have
\[
\lim_{k\to\infty}G_{n_k}(t)= 1-a(1-\Delta)e^{-t} \quad\text{ for any
}t\in(a,\infty) .
\]
As the limit does not depend on the subsequence, this proves that
\[
\lim_{n\to\infty}f_n(t)= (1-\Delta)e^{-t} \quad\text{ for any }t\in(a,\infty).
\]
Since $a>0$ is arbitrary, we have proved \eqref{eq:exptail}.
\end{proof}

We now study what is left out in the conclusion of Corollary~\ref{cor:tail} and
consider, informally, the instant $s_n$ after which the tail
of $2\gamma_n \pidn\mathbf
P^{(n)}(M_{V,V'}/\gamma_n\in \cdot) $ starts to behave like the
$(1-\Delta)$ multiple of the standard exponential distribution. 
The following result will play a crucial role in the proof of
Theorem~\ref{thm:main2}. 

\begin{prop}\label{prop:snextra}
Suppose that (\ref{eq:Delta}) and any of (1)--(4) of Theorem~\ref{thm:2ndmom} holds.  
\begin{enumerate}
\item [\rm (1)] Let $(s_n)\subseteq \R_+$ be any sequence such that \begin{align}\label{eq:defsn-1}
\liminf_{n\to\infty}2\gamma_n\pidn\mathbf P^{(n)}(M_{V,V'}>s_n)\geq  1-\Delta.
\end{align}
Then $s_n=o(\gamma_n)$ as $n\lra\infty$.
\item [\rm (2)] Let $(s_n)\subseteq \R_+$ be any sequence such
  that
  \begin{align}\label{eq:defsn}
\lim_{n\to\infty}2\gamma_n\pidn\mathbf P^{(n)}(M_{V,V'}>s_n)= 1-\Delta.
\end{align}
If $(s'_n)$ is a sequence in $\R_+$ such that $s_n'\geq s_n$ and $s_n'=o(\gamma_n)$, then (\ref{eq:defsn}) holds with $(s'_n)$ in place of $(s_n)$.
\end{enumerate}
\end{prop}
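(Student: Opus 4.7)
The plan is to leverage the pointwise convergence
\[
2\gamma_n\pidn\mathbf P^{(n)}(M_{V,V'}>\gamma_n t)\lra (1-\Delta)e^{-t},\quad \forall\; t>0,
\]
provided by Corollary~\ref{cor:tail}, together with the monotonicity of $t\mapsto \mathbf P^{(n)}(M_{V,V'}>t)$. Both parts should then reduce to straightforward comparison arguments.

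For part (1), I would proceed by contradiction. Suppose $s_n/\gamma_n\not\to 0$; then there exist $\epsilon>0$ and a subsequence $(n_k)$ along which $s_{n_k}\geq \epsilon\gamma_{n_k}$. Monotonicity of the tail of $M_{V,V'}$ gives
\[
2\gamma_{n_k}\pi^{(n_k)}_{\rm diag}\mathbf P^{(n_k)}(M_{V,V'}>s_{n_k})
\leq 2\gamma_{n_k}\pi^{(n_k)}_{\rm diag}\mathbf P^{(n_k)}(M_{V,V'}>\epsilon\gamma_{n_k}),
\]
and the right-hand side tends to $(1-\Delta)e^{-\epsilon}<1-\Delta$ by Corollary~\ref{cor:tail}. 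This contradicts (\ref{eq:defsn-1}), so $s_n=o(\gamma_n)$ as claimed.

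For part (2), monotonicity together with $s_n'\geq s_n$ yields
\[
\limsup_{n\to\infty} 2\gamma_n\pidn\mathbf P^{(n)}(M_{V,V'}>s_n') \leq \lim_{n\to\infty} 2\gamma_n\pidn\mathbf P^{(n)}(M_{V,V'}>s_n) = 1-\Delta.
\]
For the matching liminf, I would fix $\epsilon>0$ arbitrary; since $s_n'=o(\gamma_n)$, we have $s_n'\leq \epsilon\gamma_n$ for all sufficiently large $n$, hence
\[
2\gamma_n\pidn\mathbf P^{(n)}(M_{V,V'}>s_n') \geq 2\gamma_n\pidn\mathbf P^{(n)}(M_{V,V'}>\epsilon\gamma_n) \lra (1-\Delta)e^{-\epsilon}
\]
by Corollary~\ref{cor:tail}. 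Letting $\epsilon\downarrow 0$ delivers the desired lower bound.

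I do not foresee serious obstacles beyond the bookkeeping above. The only subtlety worth flagging is that Corollary~\ref{cor:tail} provides only pointwise convergence at each fixed positive $t$, but in each comparison the value of $\epsilon$ (or the subsequential lower bound) is fixed before that corollary is invoked, and monotonicity of tail probabilities handles the rest.
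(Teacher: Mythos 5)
Your proposal is correct. Part (2) is essentially identical to the paper's argument: the same monotonicity bound for the $\limsup$ and the same ``$s_n'\le\vep\gamma_n$ eventually, then let $\vep\downarrow0$'' comparison with Corollary~\ref{cor:tail} for the $\liminf$. For part (1), however, you take a genuinely different and shorter route. The paper does not invoke Corollary~\ref{cor:tail} there; instead it extracts a subsequence along which $s_n/\gamma_n\to\delta\in(0,\infty]$, shows $\delta\le 1$ via the Markov-inequality bound \eqref{eq:VVtailbound}, and then combines the identity \eqref{eq:MUV} with condition (4) of Theorem~\ref{thm:2ndmom} to deduce $(1-\Delta)(1-e^{-\delta})\ge(1-\Delta)\delta$, forcing $\delta=0$. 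Your argument simply observes that $s_{n_k}\ge\vep\gamma_{n_k}$ along a subsequence would cap the $\limsup$ by $(1-\Delta)e^{-\vep}<1-\Delta$ (using $\Delta<1$ from \eqref{eq:Delta}), contradicting \eqref{eq:defsn-1}; this is valid because Corollary~\ref{cor:tail} is established before the proposition and under exactly the same hypotheses, so there is no circularity. What the paper's longer route buys is only self-containedness relative to the Helly-selection argument behind Corollary~\ref{cor:tail}; given that the corollary is already in hand, your version is the cleaner one.
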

\begin{proof}
Consider (1) first, and we may assume that (4) of Theorem~\ref{thm:2ndmom} holds. Assume the converse that $s_n/\gamma_n$ does not converge to zero.
By passing to a subsequence if necessary, we may assume without loss of
generality that $(s_n)$ satisfies $s_n/\gamma_n\lra
\delta$ for some $\delta\in (0,\infty]$ as $n\lra\infty$. By assumption,
\begin{align*}
1-\Delta\leq &\liminf_{n\to\infty}2\gamma_n\pidn\mathbf P^{(n)}(M_{V,V'}>s_n)\\
\leq &\lim_{n\to\infty}2\gamma_n\pidn\frac{1-\piDn}
{2s_n\pidn}=(1-\Delta)\lim_{n\to\infty}\frac{\gamma_n}{s_n} ,
\end{align*}
where the second inequality is due to \eqref{eq:VVtailbound}.
Hence,
we must have $\delta\leq 1$.
 On the other hand, by
\eqref{eq:MUV} of Corollary~\ref{cor:MUV},
\begin{equation}\label{eq:MUVMUU1} 
\mathbf
P^{(n)}\left(\frac{M_{U,U'}}{\gamma_n}>\frac{s_n}{\gamma_n}\right)
=1-\piDn-2\gamma_n \pidn\mathbf
\int_0^{s_n/\gamma_n} \mathbf{P}^{(n)}(M_{V,V'}>\gamma_ns)ds.
\end{equation}
Using (4) of Theorem~\ref{thm:2ndmom}, we get
\[
\lim_{n\to\infty}\mathbf
P^{(n)}\left(\frac{M_{U,U'}}{\gamma_n}>\frac{s_n}{\gamma_n}\right)=(1-\Delta)e^{-\delta}. 
\]
Apply this to (\ref{eq:MUVMUU1}), and we obtain
\begin{align*}
(1-\Delta)\left(1-e^{-\delta}\right)&=\lim_{n\to\infty}2\gamma_n\pidn\mathbf
\int_0^{s_n/\gamma_n} \mathbf{P}^{(n)}(M_{V,V'}>\gamma_n s)ds\\
&\ge \liminf_{n\to\infty}2\gamma_n\pidn
\left(\dfrac{s_n}{\gamma_n}\right)\mathbf{P}^{(n)}(M_{V,V'}>s_n)\\
&\ge (1-\Delta)\delta
\end{align*}
by the definition of $\delta$ and \eqref{eq:defsn-1}.
As a consequence, $1-e^{-\delta}\ge
\delta$, whereas it is easy to see that
$1-e^{-\delta'}<\delta'$ as long as $\delta'>0$. This proves that $\delta$ must be 0,
so (1) follows.

To prove (2), we let $(s_n')\subset \R_+$ satisfy $s_n'\geq s_n$ and
$s_n'=o(\gamma_n)$. It is immediate that 
\begin{align}
&\limsup_{n\to\infty}2\gamma_n\pidn\mathbf P^{(n)}(M_{V,V'}>s_n')\notag\\
&\hspace{1cm}\leq \lim_{n\to\infty}2\gamma_n\pidn\mathbf
P^{(n)}(M_{V,V'}>s_n)=1-\Delta\label{eq:sn'1}
\end{align}
by the present assumption \eqref{eq:defsn}. 
To obtain the converse inequality, we fix $\vep>0$. 
Since 
$\dfrac{s_n'}{\gamma_n}<\vep$ for all large enough $n$,
\begin{align*}
&2\gamma_n\pidn\mathbf P^{(n)}\left(M_{V,V'}>s_n'\right)\\
&\hspace{1cm}\geq 2\gamma_n\pidn\mathbf
P^{(n)}(M_{V,V'}>\vep \gamma_n)\lra (1-\Delta)e^{-\vep} 
\end{align*}
as $n\lra\infty$ by our assumption on the validity of any of (1)--(4) in Theorem~\ref{thm:2ndmom} and Corollary~\ref{cor:tail}. Since $\vep>0$ is arbitrary, we deduce that
\begin{align}
\liminf_{n\to\infty}2\gamma_n\pidn\mathbf P^{(n)}(M_{V,V'}>s_n')\geq 1-\Delta.\label{eq:sn'2}
\end{align}
We now get the asserted equality \eqref{eq:defsn} for $(s_n')$ from
(\ref{eq:sn'1}) and (\ref{eq:sn'2}). The proof is complete.
\end{proof}

\section{Proof of Theorem~\ref{thm:main}}\label{sec:proof1}
In this section, we prove limit theorems for density
processes. We will focus on the
martingale property of the density processes and use semimartingale limit theorems for our purpose.  
As before, we take
a sequence of irreducible $Q$-matrices $(q^{(n)},E_n)$
with stationary distributions $(\pi^{(n)})$ and
a sequence of strictly positive constants $(\gamma_n)$. 

We first introduce some
notation for density processes used throughout this section.
For each $n$, we write $Y_n=\big(Y_n(t)\big)$ for the density processes
$\big(p_1(\xi_{\gamma_nt})\big)$ of the voter model defined by
$q^{(n)}$. By Proposition~\ref{prop:marts}, each $Y_n$ is a c\`adl\`ag $(\F^n_t)$-martingale, where
\begin{equation}\label{eq:salgebra}
\F_t^n=\sigma(\xi_{\gamma_n s};s\leq t).
\end{equation}
We recall from \eqref{eq:psvp2} that the
predictable quadratic variation process of
$Y_n$ is given by the continuous
process
\begin{equation}\label{eq:psqvp3}
\langle Y_n\rangle_t= \gamma_n\pidn\int_0^t  \left[p_{10}(\xi_{\gamma_ns})+p_{01}(\xi_{\gamma_n s})\right]ds.
\end{equation}
Note that the process in (\ref{eq:psqvp3}) is different from the quadratic variation process, which is given by 
\[
[Y_n]_t=
\sum_{s:s\leq t}\big(\Delta Y_n(s)\big)^2
\]
(see \cite{JS:LT}).

In the following theorem, we refer to \cite{JS:LT} for the notions of $C$-tightness and P-UT condition. 

\begin{thm}\label{thm:lmdp1}
Assume that (\ref{eq:Delta}) holds with $\Delta=0$ and any of (1)--(4) of Theorem~\ref{thm:2ndmom} holds. 
\begin{itemize}
\item [\rm (1)] For any $u\in [0,1]$, the sequence of laws of the c\`adl\`ag martingales
\begin{equation}\label{eq:martseq}
\big(Y_n,\P_{\mu_u}^{(n)}\big), \quad n\in \bN,
\end{equation}
is $C$-tight and this sequence of martingales satisfies the
P-UT condition. 
\item [\rm (2)] For any $u\in [0,1]$, every subsequential limit of the laws of the martingales in (\ref{eq:martseq})
  is the law of a continuous nonnegative martingale bounded by $1$. 
\item [\rm (3)] Suppose that, by choosing a subsequence if necessary, the sequence of laws of the martingales in \eqref{eq:martseq} converges to the law of a continuous
  martingale $Z$. Then a stronger convergence holds:
\begin{align}\label{eq:triconv}
(Y_n,[Y_n],\langle Y_n\rangle)\convdn (Z,[Z],[Z]).
\end{align}
\end{itemize}
\end{thm}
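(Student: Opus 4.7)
The plan is to combine the martingale representation from Proposition~\ref{prop:marts} with standard semimartingale convergence results from \cite{JS:LT}. Two bounds drive everything. First, $|\Delta Y_n|\le \pinmax$, and since $\piDn\ge (\pinmax)^2$, the hypothesis $\Delta=0$ forces $\pinmax\to 0$. Second, since $Y_n\in[0,1]$ and $Y_n^2-\langle Y_n\rangle$ is a martingale by Proposition~\ref{prop:marts}(iii),
\[
\E\big[\langle Y_n\rangle_T\big]=\E\big[Y_n(T)^2\big]-\E\big[Y_n(0)^2\big]\le 1\quad\text{for every }T\text{ and }n.
\]

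For (1), I would verify Aldous's criterion. Given $(\F_t^n)$-stopping times $\tau_n\le T$ and $\delta_n\downarrow 0$, optional sampling applied to $Y_n^2-\langle Y_n\rangle$, followed by the strong Markov property at $\tau_n$ (applied to the time-scaled Markov process $\xi_{\gamma_n\cdot}$), together with~(\ref{eq:psqvp3}) and~(\ref{eq:p1001bnd}), yield
\[
\E\big[(Y_n(\tau_n+\delta_n)-Y_n(\tau_n))^2\big]=\E\big[\langle Y_n\rangle_{\tau_n+\delta_n}-\langle Y_n\rangle_{\tau_n}\big]\le f_n(\delta_n),
\]
where $f_n(t):=2\gamma_n\pidn\int_0^t \mathbf P^{(n)}(M_{V,V'}>\gamma_n s)\,ds$. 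By Theorem~\ref{thm:2ndmom}(2) with $\Delta=0$, $f_n(t)\to 1-e^{-t}$ pointwise, and each $f_n$ is nondecreasing; if $f_n(\delta_n)\not\to 0$, then along a subsequence $f_n(\delta_n)\ge c>0$, and monotonicity forces $\liminf_n f_n(t)\ge c$ for every $t>0$, contradicting continuity of the limit at $0$. Hence $f_n(\delta_n)\to 0$, so Aldous's criterion holds; combined with $|\Delta Y_n|\to 0$ uniformly, this yields C-tightness. The P-UT condition then follows from Theorem~VI.6.15 of \cite{JS:LT}: the $Y_n$ are purely discontinuous martingales with uniformly bounded jumps ($\le 1$) and $\sup_n\E[\langle Y_n\rangle_T]<\infty$.

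Part (2) is immediate from (1): C-tightness gives continuity of any subsequential limit $Z$, the range $[0,1]$ passes to the limit, and the bounded martingale property transfers by bounded convergence applied to the defining identities $\E[g(Y_n(t_1),\ldots,Y_n(t_k))(Y_n(t)-Y_n(s))]=0$ for bounded continuous $g$ and $t_1<\cdots<t_k\le s<t$.

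For (3), I would invoke Corollary~VI.6.30 of \cite{JS:LT}: together with the P-UT property and $Y_n\convdn Z$, it yields the joint convergence $(Y_n,[Y_n])\convdn(Z,[Z])$, and $[Z]=\langle Z\rangle$ since $Z$ is continuous. It then remains to replace $[Y_n]$ by $\langle Y_n\rangle$. The difference $M_n:=[Y_n]-\langle Y_n\rangle$ is a purely discontinuous martingale (since $\langle Y_n\rangle$ is continuous by~(\ref{eq:psqvp3})) with jumps $\Delta M_n=(\Delta Y_n)^2\le(\pinmax)^2$, so Doob's $L^2$-inequality and orthogonality of martingale increments give
\[
\E\Big[\sup_{t\le T}\big([Y_n]_t-\langle Y_n\rangle_t\big)^2\Big]\le 4\,\E\Big[\sum_{s\le T}(\Delta Y_n(s))^4\Big]\le 4(\pinmax)^2\,\E[\langle Y_n\rangle_T]\longrightarrow 0.
\]
Hence $\sup_{t\le T}|[Y_n]_t-\langle Y_n\rangle_t|\to 0$ in probability. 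Combining with $(Y_n,[Y_n])\convdn(Z,[Z])$ produces the joint convergence~(\ref{eq:triconv}). The main obstacle is (3): correctly invoking the Jacod-Shiryaev machinery and handling the $[Y_n]$-versus-$\langle Y_n\rangle$ swap, which is exactly where the smallness of jumps is used decisively.
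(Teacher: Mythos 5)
Your proposal is correct, and it rests on the same Jacod--Shiryaev semimartingale machinery as the paper, but with one genuinely different step. In part (1) the paper first proves $C$-tightness of the compensators $\langle Y_n\rangle$ (compact containment via \eqref{eq:MUV2}, Aldous's criterion via \eqref{eq:p1001bnd} and condition (2) of Theorem~\ref{thm:2ndmom}) and then transfers tightness to $Y_n$ through Theorem~VI.4.13 of \cite{JS:LT}; you verify Aldous's criterion for $Y_n$ directly through the optional-sampling identity $\E[(Y_n(\tau_n+\delta_n)-Y_n(\tau_n))^2]=\E[\langle Y_n\rangle_{\tau_n+\delta_n}-\langle Y_n\rangle_{\tau_n}]$, but the underlying bound $f_n(\delta_n)$ and the monotonicity argument forcing $f_n(\delta_n)\to 0$ are the same estimates, so this is only a reorganization; part (2) is likewise the standard argument the paper attributes to Proposition~IX.1.1 of \cite{JS:LT}. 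The real divergence is in part (3). The paper reuses the $C$-tightness of $\langle Y_n\rangle$, extracts a subsequential limit $(Z,B)$ of $(Y_n,\langle Y_n\rangle)$, proves the uniform bound $\En_{\mu_u}[\langle Y_n\rangle_T^2]\le 2$ to obtain uniform integrability, and identifies $B=\langle Z\rangle$ via the martingale characterization of the bracket. You instead note that $M_n=[Y_n]-\langle Y_n\rangle$ is a purely discontinuous square-integrable martingale with
\[
\E\big[[M_n]_T\big]=\E\Big[\sum_{s\le T}(\Delta Y_n(s))^4\Big]\le (\pinmax)^2\,\E[\langle Y_n\rangle_T]\le (\pinmax)^2\lra 0,
\]
so Doob's inequality gives $\sup_{t\le T}|[Y_n]_t-\langle Y_n\rangle_t|\to 0$ in $L^2$, and \eqref{eq:triconv} then follows from $(Y_n,[Y_n])\convdn(Z,[Z])$ by a converging-together argument. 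This is shorter and arguably cleaner: it avoids the second-moment computation \eqref{eq:L2PQV} and the subsequence extraction altogether, and it isolates exactly where the smallness of the jumps ($\pinmax\to 0$, which indeed follows from $\piDn\ge(\pinmax)^2\to 0$) enters. What the paper's route buys in exchange is the explicit $L^2$-bound on $\langle Y_n\rangle_T$, which has some independent interest. Both arguments are complete.
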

\begin{proof}
  We begin with (1), and we will first show that the sequence of laws of the
  continuous processes
  $\langle Y_n\rangle$ is tight, and in fact $C$-tight. To this end, we apply
 Theorem VI.4.5 of \cite{JS:LT}, so we must verify
  two conditions stated below. First, we check the compact
  containment condition:
\begin{align}\label{ineq:ccc0}
\forall\;\vep>0,\; T>0,\; \exists \; K>0\mbox{ such that
}\sup_{n\in \Bbb N}\Pn_{\mu_u}\left(\sup_{s\leq T}\langle
  Y_n\rangle_s\geq K\right)\leq \vep. 
\end{align}
We make use of the monotonicity of $\langle Y_n\rangle$,
which gives, for every $\vep>0$, $T>0$ and $n\in\bN$,
\begin{align}
  \Pn_{\mu_u}\left(\sup_{s\leq T}\langle Y_n\rangle_s\geq
    K\right)&\leq \frac{\En_{\mu_u}[\langle
    Y_n\rangle_T]}{K}\notag\\ 
 & = \frac{\gamma_n\pidn}{K}\int_0^T\En_{\mu_u}[ 
   p_{10}(\xi_{\gamma_n s})+ p_{01}(\xi_{\gamma_n s})]ds\notag \\
 &\leq \frac{2\gamma_n\pidn}{K}\int_0^T\mathbf
  P^{(n)}(M_{V,V'}>\gamma_ns)ds\label{ineq:ccc0+}\\
&\le \dfrac 1K \label{ineq:ccc},
\end{align}
where (\ref{eq:VVtailbound}) follows from (\ref{eq:p10mu}) and
(\ref{ineq:ccc}) from \eqref{eq:MUV2}. We have proved (\ref{ineq:ccc0}).

The second condition of Theorem~VI.4.5 which we need to check is the Aldous
criterion: 
\begin{align}\label{eq:tightmc}
\forall\; \vep>0, \quad \lim_{\theta \to 0}\limsup_{n\to\infty}\sup_{S,T:S\leq T\leq
  S+\theta}
\Pn_{\mu_u}(|\langle Y_n\rangle_T-\langle Y_n\rangle_S|\geq
\vep)=0,
\end{align}
where $S$ and $T$ range over all finite $(\F_t^n)$-stopping times. 
For any $\theta>0$ and any finite $(\F_t^n)$-stopping times $S$ and $T$ satisfying $S\leq T\leq S+\theta$, we have
\begin{align}
  \En_{\mu_u}[|\langle Y_n\rangle_T-\langle
  Y_n\rangle_S|]&=\gamma_n\pidn
  \En_{\mu_u}\left[\int_S^T\left[p_{10}(\xi_{\gamma_ns})+
      p_{01}(\xi_{\gamma_ns})\right]ds\right]\notag\\ 
& \leq
\gamma_n\pidn\En_{\mu_u}\left[\int_S^{S+\theta}
\left[p_{10}(\xi_{\gamma_ns})+p_{01}(\xi_{\gamma_ns})\right]ds\right]\notag\\ 
  &=\gamma_n\pidn\En_{\mu_u}\left[\E^{(n)}_{\xi_{\gamma_nS}}\left[\int_0^\theta
      \left[p_{10}(\xi_{\gamma_n s})+p_{01}(\xi_{\gamma_n
          s})\right]ds\right]\right]\notag\\
  &\leq 2\gamma_n\pidn \int_0^\theta\mathbf
  P^{(n)}(M_{V,V'}>\gamma_n 
  s)ds,\label{eq:Aldous}
\end{align}
where the last inequality follows from \eqref{eq:p1001bnd}.
Note that the right-hand side of the last
inequality is independent of the stopping times $S$ and
$T$. By assumption, condition (2)  of
Theorem~\ref{thm:2ndmom} holds, 
and thus
\begin{align*}
\lim_{n\to\infty}2\gamma_n\pidn\int_0^\theta\mathbf P^{(n)}(M_{V,V'}>\gamma_n r)dr
=1-e^{-\theta} .
\end{align*}
Our claim (\ref{eq:tightmc}) now follows by applying this equality to the right-hand side of (\ref{eq:Aldous}). 
We have proved that the sequence $(\langle Y_n\rangle)$ of continuous processes
is tight, in fact
  $C$-tight.

 The next step is to prove the desired properties (1)--(3) of the sequence of laws of the c\`adl\`ag martingales $Y_n$, given that we have obtained the $C$-tightness of the sequence of laws of $\langle Y_n\rangle$.
Since the sequence of laws of the initial conditions $\big(Y_n(0)\big)$ is clearly tight, we may
apply Theorem~VI.4.13 of \cite{JS:LT} and conclude that the
sequence $(Y_n,\P_{\mu_u}^{(n)})$ is tight. Since the
jumps of $Y_n$ are uniformly bounded by $\pinmax$,
and $\pinmax\lra 0$ as $n\lra\infty$ on account of the
assumption that $\Delta=0$, it follows from 
Proposition~VI.3.26 of \cite{JS:LT} 
the sequence of laws of $(Y_n)$ is $C$-tight. Finally, by
Proposition~VI.6.13, the P-UT property for $(Y_n)$ holds
too, and so we have prove (1) of our theorem. 

We now consider (2). Suppose that $(Z,\Q_u)$ is a
subsequential limit of the sequence of laws of
$(Y_n,\P_{\mu_u}^{(n)})$. For convenience, we may assume that 
\[
(Y_n,\P_{\mu_u}^{(n)})\convdn (Z,\bQ_u).
\]
Since $(Y_n,\P_{\mu_u}^{(n)})$ is $C$-tight and each member
is a nonnegative martingale uniformly bounded by $1$, it
follows that the limiting object $(Z,\bQ_u)$ is a
continuous martingale bounded by $1$ by Proposition~IX.1.1
in \cite{JS:LT}, and (2) follows. Moreover, the fact that the P-UT property
satisfied by $(Y_n,\P_{\mu_u}^{(n)})$ implies, according to
Corollary~VI.6.30 of \cite{JS:LT}, that
\begin{align}
(Y_n,[Y_n])\convdn (Z,[Z]).\label{eq:Ynconv}
\end{align}

It remains to prove (3), and we need to reinforce the convergence in (\ref{eq:Ynconv})
to (\ref{eq:triconv}).
 To this end, it suffices to show
that the sequence of laws of $\langle Y_n\rangle$ converge
to the law of $[Z]=\langle Z\rangle$  as well. We have shown
the $C$-tightness of the sequence of laws of $\langle
Y_n\rangle$ in the proof of (1). Hence, by
taking a subsequence if necessary, we may assume that the
sequence of laws of $(Y_n,\langle Y_n\rangle)$ converges to
the law of $(Z,B)$ for some continuous increasing process
$B$. The sequence $(Y_n)$ is 
obviously uniformly integrable. We will show in the last paragraph of this proof
that $\{\langle Y_n\rangle_T\}_{n\in \bN}$ is $L_2$-bounded for any $T>0$,
and hence uniformly integrable. It then follows that both $Z$ and
$Z^2-B$ are continuous martingales with respect to the
filtration generated by $Z$ and $B$. The standard
characterization of $\langle Z\rangle$ implies that $\langle
Z\rangle=B$, and we can reinforce the convergence
(\ref{eq:Ynconv}) to 
\begin{align}\label{eq:conv3}
(Y_n,[Y_n],\langle Y_n\rangle)\convdn (Z,[Z],[Z]).
\end{align}

To complete the proof, we verify that for any fixed $T>0$, 
\begin{align}\label{eq:L2PQV}
\sup_{n\in \Bbb N}\E_{\mu_u}^{(n)}[\langle Y_n\rangle_T^2]<\infty. 
\end{align}
With
\eqref{eq:psqvp3} as our starting point, we expand and use
the Markov property at time $s<u$ to obtain
\begin{align*}
\En_{\mu_u}[\langle Y_n\rangle^2_T] &=
2\big(\gamma_n \pidn\big)^2 \int_0^Tds\int_s^T du \;
\En_{\mu_u}\Big[ [p_{10}(\xi_{\gamma_n s})
+p_{01}(\xi_{\gamma_n s})]
\\
&\qquad \times \En_{\xi_{\gamma_n s}}
[p_{10}(\xi_{\gamma_n(u-s)})+p_{01}(\xi_{\gamma_n(u- s)})]
\Big]\\
&\le 
2\big(\gamma_n \pidn\big)^2 \int_0^T
\En_{\mu_u} [p_{10}(\xi_{\gamma_n s})+p_{01}(\xi_{\gamma_n
  s})] ds
\\
&\qquad \times \int_0^T2\mathbf{P}^{(n)}(M_{V,V'}>\gamma_nu)du,
\end{align*}
where the last inequality is due to 
\eqref{eq:p1001bnd}. Applying \eqref{eq:p1001bnd} again,
we obtain
\[
\En_{\mu_u}[\langle Y_n\rangle^2_T] \le
2 \left[2\gamma_n
  \pidn\int_0^T\mathbf{P}^{(n)}(M_{V,V'}>\gamma_nu)du\right]^2
\le 2
\]
by \eqref{eq:MUV2}. This gives (\ref{eq:L2PQV}), and the proof of (\ref{eq:conv3}) is complete.
\end{proof}

\begin{cor}\label{cor:lmdp2}
Suppose that (\ref{eq:diag}) holds (i.e., \eqref{eq:Delta}
holds with $\Delta=0$). Then the convergence \eqref{eq:main1}
implies the mean-field condition \eqref{eq:mfc}.
\end{cor}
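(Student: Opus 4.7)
The plan is to reduce the corollary to an application of Theorem~\ref{thm:lmdp1}(3), which supplies joint convergence of the density processes together with their predictable quadratic variations. Throughout, write $Y_n(t):=p_1(\xi_{\gamma_n t})$, so that by (\ref{eq:psqvp3}) the first integral in (\ref{eq:mfc}) equals $\langle Y_n\rangle_T$. The mean-field condition (\ref{eq:mfc}) is therefore the statement that
\[
\langle Y_n\rangle_T - \int_0^T Y_n(s)\bigl(1-Y_n(s)\bigr)\,ds \;\convdn\; 0 .
\]

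First I would verify that Theorem~\ref{thm:lmdp1} is applicable, which requires one of (1)--(4) of Theorem~\ref{thm:2ndmom} with $\Delta=0$. Since $Y$ has a.s. continuous paths, the Skorokhod convergence (\ref{eq:main1}) yields $Y_n(t)\convdn Y_t$ at every fixed $t\ge 0$ (every $t$ is a.s. a continuity point of $Y$). The random variables $Y_n(t)\bigl(1-Y_n(t)\bigr)$ are uniformly bounded by $1/4$, so bounded convergence upgrades this to
\[
\E^{(n)}_{\mu_u}\bigl[p_1(\xi_{\gamma_n t})p_0(\xi_{\gamma_n t})\bigr] \longrightarrow \E_u[Y_t(1-Y_t)],
\]
which is condition (1) of Theorem~\ref{thm:2ndmom} with $\Delta=0$.

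Next I would invoke Theorem~\ref{thm:lmdp1}(3). Because the assumed limit $Y$ is continuous and its law is the unique solution of the Wright--Fisher martingale problem, every subsequential limit of $(Y_n,\P^{(n)}_{\mu_u})$ is automatically $(Y,\P_u)$; consequently the conclusion of (3) holds along the \emph{full} sequence, giving joint convergence
\[
\bigl(Y_n,\langle Y_n\rangle\bigr) \;\convdn\; \bigl(Y,\langle Y\rangle\bigr)
\]
in the Skorokhod space of $\R^2$-valued c\`adl\`ag paths, with continuous limit.

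Finally I would apply the continuous mapping theorem to the functional
\[
(\omega_1,\omega_2)\longmapsto \Bigl(\int_0^T \omega_1(s)\bigl(1-\omega_1(s)\bigr)\,ds,\ \omega_2(T)\Bigr),
\]
which from $D([0,\infty),\R^2)$ into $\R^2$ is continuous at any pair of continuous paths. Since $(Y,\langle Y\rangle)$ has continuous paths a.s., we obtain
\[
\left(\int_0^T Y_n(s)(1-Y_n(s))\,ds,\ \langle Y_n\rangle_T\right) \convdn \left(\int_0^T Y_s(1-Y_s)\,ds,\ \langle Y\rangle_T\right).
\]
By (\ref{WFPQ}) the limit lies on the diagonal of $\R^2$, so the difference of the two coordinates converges in probability to $0$; this is exactly the mean-field condition (\ref{eq:mfc}). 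There is no substantial obstacle: the only points to verify carefully are the upgrade of (\ref{eq:main1}) to condition (1) of Theorem~\ref{thm:2ndmom} (handled by continuity of $Y$ plus bounded convergence) and the continuity of the Skorokhod functional above at continuous limits, both of which are standard.
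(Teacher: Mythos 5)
Your proposal is correct and follows essentially the same route as the paper: deduce condition (1) of Theorem~\ref{thm:2ndmom} from \eqref{eq:main1}, invoke Theorem~\ref{thm:lmdp1}(3) to get joint convergence of $(Y_n,\langle Y_n\rangle)$ to the Wright--Fisher diffusion and its quadratic variation, and conclude via continuity of the map $w\mapsto\int_0^\cdot w(s)(1-w(s))\,ds$ at continuous paths. Your extra details (the fixed-time marginal convergence plus uniform boundedness to verify condition (1), and the well-posedness argument identifying all subsequential limits) are correct elaborations of steps the paper states more tersely.
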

  
\begin{proof}
Suppose that the sequence of laws of
$(Y_n,\P^{(n)}_{\mu_u})$ converges to the distribution of
the Wright-Fisher diffusion. This implies that  condition (1) of
Theorem~\ref{thm:2ndmom} holds.
As a consequence, Theorem~\ref{thm:lmdp1}
applies, and thus \eqref{eq:triconv} must hold with the limit
$Z$ distributed as the Wright-Fisher diffusion $Y$ and hence
\begin{align}\label{eq:Zqv}
[Z]_t=\int_0^t Z_s(1-Z_s)ds.
\end{align}
Since
\begin{align}\label{cnt:w(1-w)}
w\lmt \left(\int_0^t w(s)[1-w(s)]ds;t\in \R_+\right): D(\R_+,[0,1])\lra D(\R_+,\R)
\end{align}
defines a continuous function (cf. the proof of Proposition 3.7.1 in \cite{EK:MP}) for any $T\in (0,\infty)$, the equation (\ref{eq:Zqv}) and the convergence (\ref{eq:triconv}) imply
\[
\langle Y_n\rangle_T-\int_0^T Y_n(s)[1-Y_n(s)]ds\convdn 0,
\]
which is exactly the mean-field condition (\ref{eq:mfc}). 
\end{proof}

Our strategy to complete the proof of Theorem~\ref{thm:main}
is to argue that if \eqref{eq:diag} and the mean-field
condition \eqref{eq:mfc} hold then the conditions of
Theorem~\ref{thm:2ndmom} must hold, so that
Theorem~\ref{thm:lmdp1} applies. To do this, we first
show that the mean-field condition is itself a statement of
local convergence in $L^p(\P)$ for any $p\in [1,\infty)$.

\begin{prop}\label{prop:norm}
For any voter model defined by an irreducible $Q$-matrix and
initial configuration $\xi$,
\begin{align}\label{ineq:mombdd}
\E_\xi\left[\left(\int_0^\infty
    \pid[p_{10}(\xi_s)+p_{01}(\xi_s)]ds\right)^m\right]\leq
m!\quad \forall \;  \; m\in \bN.
\end{align}
Hence, the mean-field condition (\ref{eq:mfc}) holds if and
only if for all $T\in (0,\infty)$, 
\begin{multline}\label{eq:wmfnorm}
  \lim_{n\to\infty}\E^{(n)}_{\mu_u}\left[\left|\int_0^T
      \Big(\gamma_n\pidn[p_{10}(\xi_{\gamma_ns})+p_{01}
(\xi_{\gamma_ns})]-p_1(\xi_{\gamma_ns})p_0(\xi_{\gamma_ns})\Big)ds
\right|^p\right]=0\\
 \forall \;p\in
  [1,\infty).
\end{multline}
\end{prop}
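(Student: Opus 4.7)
The plan is first to establish the moment bound \eqref{ineq:mombdd} via the martingale identity of Proposition~\ref{prop:marts}(ii) coupled with a Fubini/Markov-property induction, and then to deduce the equivalence as a standard uniform integrability argument.

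For the moment bound, abbreviate $f_s = \pid[p_{10}(\xi_s)+p_{01}(\xi_s)]\geq 0$ and $I=\int_0^\infty f_s\,ds$. Since the process in Proposition~\ref{prop:marts}(ii) is a martingale, taking expectations and sending $t\to\infty$ by monotone convergence gives the base case
\[
\E_\eta[I]\leq p_1(\eta)p_0(\eta)\leq \tfrac14\leq 1\qquad \forall\; \eta\in\{0,1\}^E.
\]
For $m\geq 2$, I would use the symmetrization
\[
I^m = m!\int_{0<t_1<\cdots<t_m} f_{t_1}\cdots f_{t_m}\,dt_1\cdots dt_m
\]
and apply the Markov property at $t_{m-1}$: integrating first over $t_m\in(t_{m-1},\infty)$ and using the base case pointwise in $\xi_{t_{m-1}}$ yields
\[
\int_{t_{m-1}}^\infty \E_\xi[f_{t_1}\cdots f_{t_m}]\,dt_m
= \E_\xi\!\left[f_{t_1}\cdots f_{t_{m-1}}\cdot \E_{\xi_{t_{m-1}}}[I]\right]
\leq \E_\xi[f_{t_1}\cdots f_{t_{m-1}}].
\]
Reassembling the remaining ordered-simplex integral as $\frac{1}{(m-1)!}\E_\xi[I^{m-1}]$ produces the recursion $\E_\xi[I^m]\leq m\cdot\E_\xi[I^{m-1}]$, from which $\E_\xi[I^m]\leq m!$ follows by induction.

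For the equivalence, set
\[
A_n = \int_0^T \gamma_n\pidn\bigl[p_{10}(\xi_{\gamma_n s})+p_{01}(\xi_{\gamma_n s})\bigr]\,ds,
\qquad
B_n = \int_0^T p_1(\xi_{\gamma_n s})p_0(\xi_{\gamma_n s})\,ds.
\]
The implication \eqref{eq:wmfnorm}$\Rightarrow$\eqref{eq:mfc} is immediate from Markov's inequality. For the converse, $0\leq B_n\leq T/4$ is trivial, while the change of variables $u=\gamma_n s$ gives $A_n\leq \int_0^\infty \pidn[p_{10}(\xi_u)+p_{01}(\xi_u)]\,du$, to which \eqref{ineq:mombdd} applies; hence $\sup_n \E^{(n)}_{\mu_u}[A_n^m]\leq m!$ for every $m\in\bN$. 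Combined with the trivial bound on $B_n$, this yields $\sup_n \E^{(n)}_{\mu_u}[|A_n-B_n|^q]<\infty$ for every $q\in[1,\infty)$, so $(|A_n-B_n|^p)_n$ is uniformly integrable for every $p\in[1,\infty)$, and convergence in probability upgrades to convergence in $L^p$.

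The main obstacle is the moment bound itself, and in particular the bookkeeping of the Markov/Fubini recursion; once it is in place, the uniform integrability step is routine. Since all integrands are nonnegative, the Fubini interchanges cause no difficulty.
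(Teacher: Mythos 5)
Your proof is correct and follows essentially the same route as the paper: the ordered-simplex expansion of $I^m$ with the Markov property applied at the last time, iterated from the $m=1$ bound, followed by Markov's inequality in one direction and uniform integrability via the moment bound in the other. The only (harmless) variation is your base case, which uses the martingale identity of Proposition~\ref{prop:marts}(ii) directly in place of the paper's duality bounds \eqref{eq:p1001bnd} and \eqref{eq:MUV2}; both give $\E_\eta[I]\le 1$ uniformly in $\eta$, which is all the induction needs.
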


If we set $m=2$ and use duality, then the convergence condition (\ref{eq:wmfnorm}) is equivalent to a condition that can be expressed in terms of two pairs of coalescing Markov chains started at different times. We show in the next section that an argument using only a single pair of Markov chains is sufficient to obtain this convergence.

\begin{proof}[Proof of Proposition~\ref{prop:norm}]
By \eqref{eq:p1001bnd} and \eqref{eq:MUV2}, for any initial
configuration $\xi$, 
\begin{equation}\label{eq:m1}
\pid\E_\xi\left[\int_0^\infty [p_{10}(\xi_s)+p_{01}(\xi_s)]ds\right]
\leq 2\pid\int_0^\infty \mathbf P(M_{V,V'}>s)ds 
\le 1 .
\end{equation}
For $m\in \bN$,  if we expand the left-hand side of
\eqref{ineq:mombdd}, and then use the Markov property at
time $s_{m-1}<s_m$, we obtain
\begin{align*}
m!&\big(\pid\big)^m\int_0^\infty ds_1\int_{s_1}^\infty
  ds_2
\cdots\int_{s_{m-1}}^\infty ds_m\;
\E_\xi\left[
\prod_{i=1}^m[p_{10}(\xi_{s_i})+p_{01}(\xi_{s_i})]\right]\\
&=m!\big(\pid\big)^m\int_0^\infty ds_1\int_{s_1}^\infty
  ds_2
\cdots\int_{s_{m-2}}^\infty ds_{m-1}\;
\E_\xi\Bigg[
\prod_{i=1}^{m-1}[p_{10}(\xi_{s_i})+p_{01}(\xi_{s_i})]\\
&\quad \times \E_{\xi_{s_{m-1}}}\int_{s_{m-1}}^\infty
[p_{10}(\xi_{s_m-s_{m-1}})+p_{01}(\xi_{s_m-s_{m-1}})]ds_{m}\Big]\Bigg].
\end{align*}
By applying the bound \eqref{eq:m1} and iteration, we obtain
\eqref{ineq:mombdd}.

For the second assertion, we only need to show that the
mean-field condition implies (\ref{eq:wmfnorm}), because the
converse follows immediately from Markov's
inequality. Moreover, given the mean-field condition, by
Skorokhod's representation and a standard result of uniform
integrability, it is enough to derive a uniform bound on the
$m$-th moment of
\[
\gamma_n\pidn\int_0^\infty [p_{10}(\xi_{\gamma_ns})+p_{01}(\xi_{\gamma_ns})]ds 
\]
for any $m\in \bN$, which is precisely the content of the
first assertion. Hence, (\ref{eq:wmfnorm}) holds, and the
proof is complete. 
\end{proof}

The following result connects the mean-field condition and
the various equivalent conditions in
Theorem~\ref{thm:2ndmom}. It completes the proof of
Theorem~\ref{thm:main}. 

\begin{thm}\label{thm:almostexp}
Suppose that \eqref{eq:diag} holds (i.e., (\ref{eq:Delta})
holds with $\Delta=0$). Then the mean-field condition (\ref{eq:mfc})
implies all of the conditions of Theorem~\ref{thm:2ndmom}
hold, as well as the convergence \eqref{eq:main1} for all
$u\in[0,1]$. 
\end{thm}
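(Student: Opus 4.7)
The plan is to deduce condition (1) of Theorem~\ref{thm:2ndmom} from the mean-field condition, and then invoke Theorem~\ref{thm:lmdp1} to obtain tightness and identify the Wright-Fisher diffusion as the unique subsequential limit.

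First I would upgrade the mean-field condition to an expectation statement. By Proposition~\ref{prop:norm}, \eqref{eq:mfc} gives $L^1$ convergence: for each $t\ge 0$,
\[
\En_{\mu_u}\big[\langle Y_n\rangle_t\big] - \int_0^t \En_{\mu_u}\big[Y_n(s)(1-Y_n(s))\big]\, ds \;\longrightarrow\; 0,
\]
where $Y_n(t) = p_1(\xi_{\gamma_n t})$. Fix $u\in(0,1)$ and set $f_n(t) = \En_{\mu_u}[Y_n(t)(1-Y_n(t))]$. Combining this with \eqref{eq:time0}, which yields $f_n(t) = u(1-u)(1-\piDn) - \En_{\mu_u}[\langle Y_n\rangle_t]$, and using $\piDn\to 0$, I obtain
\[
f_n(t) + \int_0^t f_n(s)\, ds \;\longrightarrow\; u(1-u)\quad\text{for each } t\ge 0.
\]

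Next I would solve this asymptotic integral equation. Setting $F_n(t)=\int_0^t f_n(s)\,ds$ and $\phi_n(t)=f_n(t)+F_n(t)-u(1-u)$, the bounds $0\le f_n\le 1/4$ and $0\le F_n(t)\le t/4$ make $\phi_n$ uniformly bounded on each compact interval, with $\phi_n(t)\to 0$ pointwise. Solving the linear ODE $F_n'(t)+F_n(t)=u(1-u)+\phi_n(t)$ with $F_n(0)=0$ yields
\[
F_n(t) = u(1-u)(1-e^{-t}) + e^{-t}\int_0^t e^s\phi_n(s)\,ds,
\]
and dominated convergence gives $F_n(t)\to u(1-u)(1-e^{-t})$, hence $f_n(t)\to u(1-u)e^{-t}=\E_u[Y_t(1-Y_t)]$. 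This is condition (1) of Theorem~\ref{thm:2ndmom}.

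With this, Theorem~\ref{thm:lmdp1} applies: the laws of $(Y_n,\P^{(n)}_{\mu_u})$ are $C$-tight, every subsequential weak limit $Z$ is a continuous martingale bounded by $1$ with $Z(0)=u$ (since $\operatorname{Var}_{\mu_u}(Y_n(0)) = u(1-u)\piDn\to 0$), and along any convergent subsequence $(Y_n,\langle Y_n\rangle)\convdn(Z,[Z])$. Applying the continuous map \eqref{cnt:w(1-w)} jointly with $(Y_n,\langle Y_n\rangle)$ yields
\[
\Big(\langle Y_n\rangle,\;\int_0^\cdot Y_n(s)(1-Y_n(s))\,ds\Big)\convdn\Big([Z],\;\int_0^\cdot Z_s(1-Z_s)\,ds\Big);
\]
since the mean-field condition forces the two coordinates of the prelimit to differ by a quantity tending to $0$ in probability, we conclude $[Z]_t=\int_0^t Z_s(1-Z_s)\,ds$. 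Thus $Z$ solves the well-posed Wright-Fisher martingale problem started at $u$, so $Z\stackrel{\mathrm d}{=}(Y,\P_u)$, and uniqueness of the subsequential limit gives \eqref{eq:main1}. The boundary cases $u\in\{0,1\}$ are immediate since $\mu_u$ is concentrated on an absorbing configuration. The main obstacle is the middle step, transferring $L^1$ mean-field convergence into pointwise convergence of $f_n(t)$; this is handled cleanly because $f_n$ and its antiderivative satisfy a linear ODE whose inhomogeneity vanishes pointwise and is uniformly bounded, so dominated convergence transports the limit through integration.
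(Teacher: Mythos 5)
Your proposal is correct and follows essentially the same route as the paper: both deduce condition (1) of Theorem~\ref{thm:2ndmom} by combining the $L^1$ form of the mean-field condition (Proposition~\ref{prop:norm}) with the martingale identity \eqref{eq:time0} to obtain a limiting linear integral equation whose solution is $u(1-u)e^{-t}$, and then invoke Theorem~\ref{thm:lmdp1} together with the continuity of the map \eqref{cnt:w(1-w)} to identify every subsequential limit as the Wright--Fisher diffusion. The only (cosmetic) difference is that you solve the equation for each $n$ by an integrating factor with a vanishing inhomogeneity $\phi_n$, whereas the paper first extracts a subsequential limit $I(t)$ of $\int_0^t\E^{(n)}_{\mu_u}[p_1p_0]\,ds$ and shows it satisfies $I(t)=\int_0^t[u(1-u)-I(s)]\,ds$.
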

\begin{proof}
  Let $(\gamma_n)$ be a sequence of strictly positive constants
  so that the mean-field condition (\ref{eq:mfc}) holds. For
  the first assertion, it is enough to show that
  (1) of Theorem~\ref{thm:2ndmom} holds. By taking a
  subsequence if necessary, we may assume that
\begin{align}
I(t)= \lim_{n\to\infty}\int_0^t
\E^{(n)}_{\mu_u}\left[p_1(\xi_{\gamma_ns})p_0(\xi_{\gamma_ns})\right]ds\label{eq:lmtp0p1} 
\end{align}
exists in $\R_+$ for all $t\in \bQ_+$. Since $s\lmt
\En_{\mu_u}\left[p_1(\xi_{\gamma_ns})p_0(\xi_{\gamma_ns})\right]$
is uniformly bounded, a monotonicity argument implies that
the foregoing limit exists for all $t\in \R_+$ and defines a
continuous function $I$ on $\R_+$. Moreover, given that the
mean-field condition holds, we can  write the
function $I$ as 
\begin{equation}\label{eq:lmtp0p1b}
I(t)=\lim_{n\to\infty}\gamma_n\pidn\int_0^t
\En_{\mu_u}[p_{10}(\xi_{\gamma_n
  s})+p_{01}(\xi_{\gamma_ns})]ds
\end{equation}
by Proposition~\ref{prop:norm}. 
In view of \eqref{eq:time0} and the last display, we obtain
\begin{align}\label{eq:lmtp0p1c}
\lim_{n\to\infty}\E^{(n)}_{\mu_u}[p_1(\xi_{\gamma_n
  t})p_0(\xi_{\gamma_n t})]
=u(1-u) - I(t) \quad\forall \; t\in\R_+.
\end{align}
By the bounded convergence theorem and the definition of
$I(t)$, this implies
\begin{align*}
I(t)=\int_0^t[u(1-u)-I(s)]ds
\end{align*}
by (\ref{eq:lmtp0p1}).
Solving this integral equation gives
$I(t)=u(1-u)(1-e^{-t})$. By plugging this solution into the right-hand side
of \eqref{eq:lmtp0p1c}, we find that
\[
\lim_{n\to\infty}\E^{(n)}_{\mu_u}[p_1(\xi_{\gamma_n
  t})p_0(\xi_{\gamma_n t})] = u(1-u)e^{-t},
\]
which is (1) of Theorem~\ref{thm:2ndmom}. This
proves the first assertion. 

Having proved that the conditions of
Theorem~\ref{thm:2ndmom} hold, we may now apply 
Theorem~\ref{thm:lmdp1}. By (i) of Theorem~\ref{thm:lmdp1}, 
the family $\big(Y_n,\Pn_{\mu_u}\big)$ is $C$-tight for any $u\in [0,1]$.
If $\big(Y_{n_k},\P^{(n_k)}_{\mu_u}\big)$ is any weakly
convergent subsequence, then (ii) and (iii) of Theorem~\ref{thm:lmdp1} imply that 
\[
( Y_{n_k}, \langle Y_{n_k}\rangle) \convdn
( Z, \langle Z\rangle) 
\]
for a continuous martingale $Z$. Thanks to the continuity of the map (\ref{cnt:w(1-w)}), we deduce from the mean-field condition (\ref{eq:mfc}) that
\[
\langle Z\rangle_T=\int_0^T Z_s(1-Z_s)ds\quad\forall \; T\in \R_+ 
\]
almost surely.
Hence, $Z$ is a Wright-Fisher diffusion, and the proof is complete.
\end{proof}

Although we only consider Bernoulli initial conditions throughout this section, the readers may notice that most of the proofs do apply
to the context where for each $n$, the initial condition of the voter model defined by $(q^{(n)},E_n)$ is a general probability measure $\lambda_n$ on $\{0,1\}^{E_n}$. 

More precisely, the same proofs of Theorem~\ref{thm:lmdp1}, Corollary~\ref{cor:lmdp2}, and Proposition~\ref{prop:norm} still apply, if we consider such a generalization. 
For the extension of Theorem~\ref{thm:almostexp}, we consider general initial conditions $\lambda_n$ for which the sequence of laws $\lambda_n(p_1(\xi)\in \cdot)$ converges weakly to a probability measure, say, $\widehat{\lambda}_\infty$ on $[0,1]$, and use (\ref{eq:psvp2}) instead of (\ref{eq:time0}) to obtain an analogue of (\ref{eq:lmtp0p1c}). This leads to the conclusion that, whenever the mean-field condition (\ref{eq:mfc}), with $\mu_u$ replaced by $\lambda_n$ for each voter model defined $q^{(n)}$, holds, we have the weak convergence of the associated density processes to the Wright-Fisher diffusion with initial condition $\widehat{\lambda}_\infty$.

\section{Proof of Theorem~\ref{thm:main2}}\label{sec:proof2}
For the convenience of readers, we give an informal
outline of the proof of Theorem~\ref{thm:main2} first.  We take a
generic voter model as usual and a constant
$\gamma>0$. Falling back in time by a small amount $\delta$
and using the Markov property of voter models, we get for
any instant $s$
\begin{align*}
&\gamma \pid[p_{10}(\xi_{\gamma s})+p_{01}(\xi_{\gamma s})]-p_1(\xi_{\gamma s})p_0(\xi_{\gamma s})\\
&\hspace{1cm}\simeq  \gamma \pid\big(\E_{\xi_{\gamma(s-\delta)}}[p_{10}(\xi_{\gamma\delta})]+\E_{\xi_{\gamma(s-\delta)}}[p_{01}(\xi_{\gamma\delta})]\big)-p_1
\left(\xi_{\gamma(s-\delta)}\right)p_0\left(\xi_{\gamma(s-\delta)}\right)
\end{align*}
on $\sigma(\xi_{\gamma u};u\leq s)$.
We then resort to duality and interpret the right-hand side, or more generally the term
\begin{align}\label{fbmf}
\gamma\pid\big(\E_{\xi}[p_{10}(\xi_{\gamma\delta})]+\E_{\xi}[p_{01}(\xi_{\gamma\delta})]\big)-p_1
\left(\xi\right)p_0\left(\xi\right)
\end{align}
for arbitrary $\xi$, by moving forward in time from the point of view of $q$-Markov chains.
For the first two terms $\E_{\xi}[p_{10}(\xi_{\gamma\delta})]$ and $\E_\xi[p_{01}(\xi_{\gamma\delta})]$, we use Proposition~\ref{prop:pairapprox} and read them
 as expectations of the function 
 \[
 (x,y)\lmt \xi(x)\dxi(y) 
 \]
 of some pairs of $q$-Markov chains before they meet.
On the other hand, $p_1(\xi)$ and $p_0(\xi)$ are the $\pi$-expectations of configurations $\xi$ and $\dxi$, respectively, where $\pi $ is the stationary distribution of the $q$-Markov chain. 
Applying these observations to the quantity (\ref{fbmf}), we can regard (\ref{eq:mfc}) as a result that, informally speaking, the time that a $q$-Markov chain gets close to its equilibrium distribution $\pi$ ``falls far behind" the time that two $q$-Markov chains meet. See also \cite{CMP:CTLVM} for an application of this ``falling-back-moving-forward" argument.

Some additional notation will be useful in the first step of
making the above precise. 
Recall the  system $(X^x_t)$ of independent
$q$-Markov chains on $E$ with semigroup $(q_t)$ and stationary
distribution $\pi$. For any real function $f$ on $E$ define
$\pi(f)=\sum_{x\in E}f(x)\pi(x)$, $q_tf(x) =\sum_{y\in
  E}q_t(x,y)f(y)$, and
\[
{\rm Var}_\pi(f) = \sum_{x\in E}\big(f(x)-\pi(f)\big)^2 \pi(x).
\]

The following bounds will be useful. First,
we have two bounds on the difference between
$q_t\xi(x)$ and $p_1(\xi)$. Recall the
definition of $d_E$ in \eqref{eq:dedef}.
Since $p_1(\xi)=\pi(\xi)$ and $\xi$ is bounded by $1$, it follows that 
\begin{equation}\label{eq:dEbound}
|q_t\xi(x)-p_1(\xi)| \le 2d_E(t), \qquad \forall\;
x\in E, \;\xi\in\{0,1\}^E
\end{equation}
(see Proposition 4.5 in \cite{LPW}).
A second bound (see, e.g., Lemma 2.4 of \cite{DS:LSI}) is
available when $(q_t)$ is reversible and has spectral gap
$\gap$. In this case, for any $f$, 
\begin{equation}\label{eq:gapbound}
{\rm Var}_\pi(q_tf) \le
{\rm Var}_\pi(f) 
e^{-2\gap t}.
\end{equation}
Second, it follows from the definition of $\tmix$ in (\ref{def:tmix}) that
\begin{align}\label{ineq:dEexp}
d_E(k\tmix)\leq e^{-k},\quad \forall\; k\in \Bbb N
\end{align}
(see Section 4.5 of \cite{LPW}).

\begin{prop}\label{prop:main2-1}
Let $(q,E)$ be an irreducible $Q$-matrix. 
For any $0<s<t<\infty$, we have the following estimates. 
\begin{enumerate}
\item [(1)] If $d_E$ denotes the maximal total variation distance defined by (\ref{eq:dedef}), then
\begin{align}
\begin{split}\label{eq:bdd1}
&\sup_{\xi\in \{0,1\}^E}\big|\E_\xi\left[p_{10}(\xi_t)\right]-\mathbf P(M_{V,V'}>s)p_1(\xi)p_0(\xi)\big|\\
&\quad \quad \leq \mathbf P(M_{V,V'}\in (s,t])+4\mathbf P(M_{V,V'}>s)d_E(t-s).
\end{split}
\end{align}
The same inequality holds if $p_{10}$ is replaced by $p_{01}$.
\item [(2)] If the $q$-Markov chain is reversible and $\mathbf g$ is the
 associated spectral gap, then
\begin{align}
\begin{split}\label{eq:bdd2}
&\sup_{\xi\in \{0,1\}^E}\left|\E_\xi[p_{10}(\xi_t)]-\mathbf P(M_{V,V'}>s)p_1(\xi)p_0(\xi)\right|\\
&\quad \quad\leq \mathbf P(M_{V,V'}\in (s,t])+ \frac{2\pi_{\max}q_{\max}}{\nu(\1)}e^{-\mathbf g(t-s)},
\end{split}
\end{align}
where $\pi_{\rm max}=\max\{\pi(x);x\in E\}$ and $q_{\max}=\max\{q(x);x\in E\}$.
The same inequality holds if $p_{10}$ is replaced by $p_{01}$.
\end{enumerate}
\end{prop}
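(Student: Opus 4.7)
The plan is to follow the ``falling back, moving forward'' strategy sketched at the opening of this section. I would start from the duality identity \eqref{eq:p10xi},
\[
\E_\xi[p_{10}(\xi_t)] = \mathbf E\bigl[\xi(X^V_t)\hxi(X^{V'}_t);\, M_{V,V'}>t\bigr],
\]
and split $\{M_{V,V'}>t\}=\{M_{V,V'}>s\}\setminus\{s<M_{V,V'}\le t\}$. Since $\xi,\hxi\le 1$, the piece on $\{s<M_{V,V'}\le t\}$ contributes an error no larger than $\mathbf P(M_{V,V'}\in(s,t])$, which is precisely the first term on the right of both \eqref{eq:bdd1} and \eqref{eq:bdd2}. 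It then suffices to compare $\mathbf E[\xi(X^V_t)\hxi(X^{V'}_t);\,M_{V,V'}>s]$ with $\mathbf P(M_{V,V'}>s)\,p_1(\xi)p_0(\xi)$.

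Conditioning first on $(V,V')=(a,b)$ and then using the independence of $X^a$ and $X^b$ together with the Markov property at time $s$, this restricted expectation equals $\mathbf E[q_{t-s}\xi(X^V_s)\,q_{t-s}\hxi(X^{V'}_s);\,M_{V,V'}>s]$. Writing $f:=q_{t-s}\xi-p_1(\xi)$ and $g:=q_{t-s}\hxi-p_0(\xi)$, one has $\pi(f)=\pi(g)=0$, and expanding the product yields an error
\[
A = \mathbf E\bigl[\,p_0(\xi)f(X^V_s) + p_1(\xi)g(X^{V'}_s) + f(X^V_s)g(X^{V'}_s);\,M_{V,V'}>s\,\bigr].
\]
For part (1), the bound \eqref{eq:dEbound} gives $\|f\|_\infty\vee\|g\|_\infty\le 2d_E(t-s)$, so using $p_0(\xi)+p_1(\xi)=1$ the three contributions to $A$ are crudely dominated by $4d_E(t-s)\mathbf P(M_{V,V'}>s)$, which yields \eqref{eq:bdd1}.

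For part (2), I would replace the pointwise bound by the spectral gap estimate \eqref{eq:gapbound}, giving $\mathrm{Var}_\pi(f)\vee\mathrm{Var}_\pi(g)\le \tfrac14 e^{-2\mathbf g(t-s)}$. To turn these $\pi$-variances into bounds on expectations under the law $\mu_s^V$ of $X^V_s$ (and likewise $\mu_s^{V'}$), I would use that the $V$-marginal of $\bar\nu$ is $\pi_V(a)=\pi(a)^2 q(a)/\nu(\1)$, so $\|\pi_V/\pi\|_\infty\le \pi_{\max}q_{\max}/\nu(\1)$, and then appeal to the reversibility identity $\mu_s^V/\pi = q_s(\pi_V/\pi)$ (a direct consequence of detailed balance), which together with the $L^\infty$-contractivity of $q_s$ yields $\|\mu_s^V/\pi\|_\infty\le \pi_{\max}q_{\max}/\nu(\1)$. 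Applying Cauchy--Schwarz (or Jensen) term by term in $A$ then produces a bound of order $\pi_{\max}q_{\max}e^{-\mathbf g(t-s)}/\nu(\1)$, from which \eqref{eq:bdd2} follows after collecting constants. The $p_{01}$ versions are obtained by swapping the roles of $\xi$ and $\hxi$.

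The main technical point I expect is the reversibility input used to transport the $L^\infty$-bound on $\pi_V/\pi$ forward in time to $\mu_s^V/\pi$; without reversibility one only has $L^2$-contractivity of $q_s$ as an operator on mean-zero functions, which is exactly why part (2) requires the reversibility hypothesis.
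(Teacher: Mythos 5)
Your proposal is correct and follows essentially the same route as the paper's proof: duality via \eqref{eq:p10xi}, restriction from $\{M_{V,V'}>t\}$ to $\{M_{V,V'}>s\}$ at cost $\mathbf P(M_{V,V'}\in(s,t])$, the Markov property at time $s$, and then either the uniform bound \eqref{eq:dEbound} or Cauchy--Schwarz combined with \eqref{eq:gapbound} and the bound $\mathbf P(X^V_s=x)\le \pi_{\max}q_{\max}\,\pi(x)/\nu(\1)$ (and likewise for $V'$, where reversibility is genuinely needed to control the marginal of $V'$). The only cosmetic difference is that the paper telescopes the product rather than expanding with a cross term, and it transports the density bound forward in time using only the stationarity identity $\sum_a\pi(a)q_s(a,x)=\pi(x)$ rather than your detailed-balance identity $\mu_s^V/\pi=q_s(\pi_V/\pi)$.
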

\begin{proof}
The proofs of (1) and (2) are based on the preliminary bound 
\begin{multline}\label{eq:main2-1-2}
\left|\E_\xi[p_{10}(\xi_t)] - p_1(\xi)p_0(\xi)
\mathbf{P}(M_{V,V'}>s)\right| \le \mathbf P(M_{V,V'}\in(s,t])\\ + 
\left|\mathbf E\left[q_{t-s}\xi(X^V_s)
q_{t-s}\dxi(X^{V'}_s)-p_1(\xi)p_0(\xi);M_{V,V'}>s\right]\right| .
\end{multline}
To get this bound, we first use Proposition~\ref{prop:pairapprox} and write for any configuration $\xi$,
\begin{align}
\E_\xi[p_{10}(\xi_{t})]
=&\mathbf E\left[\xi(X^V_t)\dxi(X^{V'}_t);M_{V,V'}>t\right]\notag\\
=&\mathbf E\left[\xi(X^{V}_t)\dxi(X^{V'}_t);M_{V,V'}>s\right]+\vep_1(s,t;\xi),\label{eq:main2-1-1}
\end{align}
where
\begin{align}\label{vep1}
|\vep_1(s,t;\xi)|\leq \mathbf P(M_{V,V'}\in (s,t])
\end{align}
uniformly in $\xi$. Applying the Markov property of the
two-dimensional process $(X^{V},X^{V'})$ at time $s$, we
have 
\begin{align*}
  \mathbf
  E\Big[\xi(X^V_t)\dxi(X^{V'}_t)&;M_{V,V'}>s\Big]=\mathbf
  E\left[\mathbf E\left[\xi(X^v_{t-s})\dxi(X^{v'}_{t-s})\right]
    \Big|_{(v,v')=(X^V_s,X^{V'}_s)};M_{V,V'}>s\right]\\
  &=\mathbf
  E\left[q_{t-s}\xi(X^V_s)q_{t-s}\dxi(X^{V'}_s);M_{V,V'}>s\right]\\
  &=  p_1(\xi)p_0(\xi)\mathbf P(M_{V,V'}>s)\\
  &\qquad+ \mathbf E\left[q_{t-s}\xi(X^V_s)
    q_{t-s}\dxi(X^{V'}_s)-p_1(\xi)p_0(\xi);M_{V,V'}>s\right]
  .
\end{align*}
Combining this equality and the bound (\ref{vep1}) on $\vep_1$ with
\eqref{eq:main2-1-1} gives (\ref{eq:main2-1-2}).

We now consider the proof of (1). The last term in
\eqref{eq:main2-1-2} is bounded above by 
 \begin{align*}
\mathbf E\Big[\Big|&q_{t-s}\xi(X^V_s)
q_{t-s}\dxi(X^{V'}_s)-p_1(\xi)p_0(\xi)\Big|;M_{V,V'}>s\Big] \\
&\le \mathbf
E\left[p_0(\xi)\left|p_1(\xi)-q_{t-s}\xi(X^V_s)\right|;M_{V,V'}>s\right]
\\
&\qquad + \mathbf E\left[q_{t-s}\xi(X^V_s)\left|
p_0(\xi)-q_{t-s}\dxi(X^{V'}_s)\right|;M_{V,V'}>s\right]\\
&\le 4d_E(t-s) \mathbf P(M_{V,V'}>s),
\end{align*}
where we have used \eqref{eq:dEbound}. 
Plugging this bound into \eqref{eq:main2-1-2} gives 
\eqref{eq:bdd1}.

Next, we turn to the proof of (2). In this case, we
bound the last term in \eqref{eq:main2-1-2} in the following way:
\begin{align}
\mathbf E\Big[\Big|q_{t-s}&\xi(X^V_s)q_{t-s}\dxi(X^{V'}_s)-
\pi(\xi)\pi(\dxi)\Big|\Big]\notag\\
&\le \mathbf E\Big[\Big|q_{t-s}\xi(X^V_s)-\pi(\xi)\Big|\Big]+\mathbf E\Big[
\Big|q_{t-s}\dxi(X^{V'}_s)- \pi(\dxi)\Big|\Big]\notag\\
&\le \mathbf E\left[\Big(q_{t-s}\xi(X^V_s)-\pi(\xi)\Big)^2\right]^{1/2}+\mathbf E\left[\Big(q_{t-s}\dxi(X^{V'}_s)-\pi(\dxi)\Big)^2\right]^{1/2}.
\label{eq:Var}
\end{align}
Recall the distribution of $(V,V')$ in (\ref{eq:distVV}).
For all $x\in E$ and $s\geq 0$, we have
\[
\mathbf{P}(X^V_s=x)=\bpi(\{x\}\times E)=\frac{\pi(x)^2q(x)}{\nu(\1)}\le
\frac{\pi_{\max}q_{\max}}{\nu(\1)}\pi(x).
\]
Since $(X^x)$ is independent of $V$, it follows from the foregoing inequality that
\begin{align}\label{eq:VVVbdd}
\mathbf E\left[\big(q_{t-s}\xi(X^V_s)-\pi(\xi)\big)^2\right]
\le \frac{\pi_{\max}q_{\max}}{\nu(\1)}\,{\rm Var}_\pi(q_{t-s}\xi) \le \frac{\pi_{\max}q_{\max}}{\nu(\1)}e^{-2\gap (t-s)} ,
\end{align}
where we have used \eqref{eq:gapbound} and the fact that
${\rm Var}(f)\le 1$ if $|f|$ is bounded by 1. The same bound
holds if we replace $\xi(X^V_s)$ with $\hxi(X^{V'}_s)$. Indeed, we still have
\[
\mathbf  P(X^{V'}_s=x)\leq \frac{\pi_{\max}q_{\max}}{\nu(\1)}\pi(x)\quad\forall\; x\in E\mbox{ and }s\geq 0,
\]
since for any $x\in E$, reversibility implies
\begin{align*}
\mathbf P(V'=x)=&\frac{\sum_{a:a\neq x}\pi(a)^2q(a,x)}{\nu(\1)}\\
= &\frac{\pi(x)\sum_{a:a\neq x}\pi(a)q(x,a)}{\nu(\1)}\leq \frac{\pi_{\max}q_{\max}}{\nu(\1)}\pi(x).
\end{align*}
Hence by (\ref{eq:VVVbdd}) and its analogue when $V$ is replaced by $V'$, we obtain from \eqref{eq:Var} that
\[
\mathbf E\Big[\Big|q_{t-s}\xi(X^V_s)q_{t-s}\dxi(X^{V'}_s)-
\pi(\xi)\pi(\dxi)\Big|\Big]\le \frac{2\pi_{\max}q_{\max}}{\nu(\1)} e^{-\gap (t-s)}.
\]
Plugging this bound into
\eqref{eq:main2-1-2} completes the proof of \eqref{eq:bdd2}. 
\end{proof}

\begin{lem}\label{lem:main2}
  If $\gamma_n=\tmeetn$, then under either condition of
  Theorem~\ref{thm:main2}, any of the conditions in
  Theorem~\ref{thm:2ndmom} holds with $\Delta=0$. Moreover,
  we can choose $(s_n')$ satisfying (\ref{eq:defsn}) with
  $s_n'=o(\gamma_n)$ such that with $\delta_n=
  s_n'/\gamma_n$,
\begin{align}
\vep_n= \sup_{\xi\in \{0,1\}^{E_n}}\left|\gamma_n\pidn\E_\xi^{(n)}\left[
p_{10}(\xi_{\gamma_n\cdot 2\delta_n})+p_{01}(\xi_{\gamma_n\cdot 2\delta_n})\right]-p_1(\xi)p_0(\xi)\right|\xrightarrow[n\to\infty]{} 0.\label{eq:defvepn}
\end{align}
\end{lem}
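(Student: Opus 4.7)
The plan has three stages. First, I would establish an almost-exponentiality of the form $M_{U,U'}/\gamma_n \convdn \mathbf e$, which, together with $\piDn \to 0$, gives condition~(4) of Theorem~\ref{thm:2ndmom} with $\Delta = 0$. Second, I would construct the sequence $(s_n')$ with the required properties by exploiting the pointwise tail asymptotics that follow from Corollary~\ref{cor:tail}. Third, I would estimate $\vep_n$ by applying Proposition~\ref{prop:main2-1} with $t = 2 s_n'$ and $s = s_n'$. For the first stage, under hypothesis~(i), the convergence $M_{U,U'}/\tmeetn \convdn \mathbf e$ is precisely Oliveira's theorem in \cite{O:MFC}, applied to the meeting time viewed as the hitting time of the diagonal by the product chain of two independent copies of the $q^{(n)}$-chain. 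Under hypothesis~(ii), the assumption \eqref{eq:(ii)} combined with the lower bound \eqref{eq:tmeetnlbd} forces $\gap_n \tmeetn \to \infty$, and the classical Aldous--Brown framework for hitting times in reversible chains with spectral gap (Proposition~5.23 of \cite{AF:MC}, together with the exponential decay \eqref{eq:gapbound}) yields the same almost-exponentiality.

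With condition~(4) of Theorem~\ref{thm:2ndmom} in hand, Corollary~\ref{cor:tail} delivers the pointwise convergence $2\gamma_n \pidn \mathbf P^{(n)}(M_{V,V'} > \gamma_n t) \to e^{-t}$ for every $t > 0$. A diagonal extraction produces $t_n \downarrow 0$ such that $s_n := \gamma_n t_n$ satisfies $2\gamma_n \pidn \mathbf P^{(n)}(M_{V,V'} > s_n) \to 1$, which is \eqref{eq:defsn} with $\Delta = 0$. I would then enlarge $s_n$ to $s_n' := \max(s_n, r_n)$, where $r_n = o(\gamma_n)$ is chosen large enough to kill the mixing remainder in Proposition~\ref{prop:main2-1}. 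Under~(i), I would take $r_n = c_n \tmixn$ with $c_n \to \infty$ and $c_n \tmixn = o(\gamma_n)$, which is feasible because $\tmixn/\tmeetn \to 0$. Under~(ii), I would take $r_n = \alpha_n \gamma_n$ with $\alpha_n \to 0$ and $\alpha_n \gap_n \gamma_n / L_n \to \infty$, where $L_n := \log(e \vee \tmeetn \pinmax q_{\max}^{(n)})$; such $\alpha_n$ exists because $L_n / (\gap_n \gamma_n) \to 0$ by \eqref{eq:(ii)}. In either case, Proposition~\ref{prop:snextra}(2) ensures that both $s_n'$ and $2 s_n'$ still satisfy \eqref{eq:defsn}.

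For the final estimate, I would apply Proposition~\ref{prop:main2-1}(1) under (i), or part~(2) under (ii), with $t = 2 s_n'$ and $s = s_n'$, sum the resulting bounds for $p_{10}$ and $p_{01}$, multiply by $\gamma_n \pidn$, and use the trivial bound $\sup_\xi p_1(\xi) p_0(\xi) \le 1/4$. This yields a uniform estimate of the form
\[
\vep_n \;\le\; \tfrac{1}{4}\bigl|\,2\gamma_n \pidn \mathbf P^{(n)}(M_{V,V'} > s_n') - 1\bigr| + 2\gamma_n \pidn \mathbf P^{(n)}\bigl(M_{V,V'} \in (s_n', 2 s_n']\bigr) + R_n,
\]
whose first two terms vanish because $s_n'$ and $2 s_n'$ both satisfy \eqref{eq:defsn}. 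Under~(i), $R_n \le 8\gamma_n \pidn \mathbf P^{(n)}(M_{V,V'} > s_n') \cdot d_{E_n}(s_n') \to 4 \cdot 0 = 0$, using $d_{E_n}(s_n') \le e^{-c_n}$ from \eqref{ineq:dEexp}; under~(ii), $R_n \le 4 \gamma_n \pinmax q_{\max}^{(n)} e^{-\gap_n s_n'} \le 4 \exp\bigl(L_n - \alpha_n \gap_n \gamma_n\bigr) \to 0$ by the construction of $\alpha_n$. The main obstacle is the first stage: while almost-exponentiality of Markov chain hitting times is classical, extracting exactly the product-chain version needed here and verifying that the hypotheses of Theorem~\ref{thm:main2} propagate properly to the diagonal hitting problem requires some care. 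The remaining stages are then a controlled combination of Propositions~\ref{prop:main2-1} and~\ref{prop:snextra} with Corollary~\ref{cor:tail}.
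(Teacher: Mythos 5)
Your proposal is correct and follows essentially the same route as the paper: establish $M_{U,U'}/\tmeetn\convdn\mathbf e$ via almost-exponentiality of the diagonal hitting time for the product chain (the paper cites Aldous \cite{A:MCEHT} under (i) and Proposition 3.23 of \cite{AF:MC} under (ii), but your references serve the same purpose), then enlarge a sequence $(s_n)$ satisfying \eqref{eq:defsn} by $u_n\tmixn$ or $\gamma_n/u_n$ exactly as the paper does, invoke Proposition~\ref{prop:snextra}(2) for $s_n'$ and $2s_n'$, and close with Proposition~\ref{prop:main2-1} at $s=s_n'$, $t=2s_n'$. The final estimates, including the bound $4\exp(L_n-\alpha_n\gap_n\gamma_n)\to 0$ under (ii), coincide with the paper's computation up to the substitution $\alpha_n=1/u_n$.
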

\begin{proof} 
Let $\gamma_n=\tmeetn=\mathbf E^{(n)}[M_{U,U'}]$. The
strategy is to first prove that 
(4) of Theorem~\ref{thm:2ndmom}
holds, i.e., 
\begin{align}\label{dis:ae}
\frac{M_{U,U'}}{\gamma_n}\xrightarrow[n\to\infty]{{\rm (d)}} \mathbf e,  
\end{align}
and then use
Proposition~\ref{prop:snextra} and the bounds in
Proposition~\ref{prop:main2-1} to choose a 
sequence $(s_n')$ satisfying \eqref{eq:defvepn}.

Suppose first that (i) of Theorem~\ref{thm:main2} holds, and
consider the product chain comprised of
two independent copies of
$q^{(n)}$-Markov chains. For the product chain started   
at its stationary distribution $\pi\otimes\pi$, the first
hitting time of the diagonal $D_n$ has the same law as the meeting time
$M_{U,U'}$. Letting $\big(\tilde q^{(n)}_t\big)$ denote the product chain
semigroup, we have the obvious inequality
\[
\|\tilde q^{(n)}_t(\cdot) -
  \pi^{(n)}\otimes\pi^{(n)}(\cdot)\|_{\rm TV} \le 2 d_{E_n}(t).
\]
By this inequality and our assumption that $\tmixn/\tmeetn\lra 0$, 
Theorem~1.4 of \cite{A:MCEHT} applies to the product chain and gives
\eqref{dis:ae}. 

Now let $(s_n)$ be a sequence with $s_n=o(\gamma_n)$ 
satisfying (\ref{eq:defsn}). Note that the existence of $(s_n)$ follows from the assumption that $\Delta=0$ and the fact that $\mathbf P^{(n)}(V=V')=0$.
Define $(s_n')$ by
\[
s_n'=s_n\vee u_n\mathbf t_{\rm mix}^{(n)},
\] 
where $(u_n)$ satisfies 
\[
\lim_{n\to\infty}u_n=\infty\quad\mbox{and}\quad
\lim_{n\to\infty}\frac{u_n\mathbf t_{\rm mix}^{(n)}}{\gamma_n}= 0.
\] 
Observe that $\delta_n=s_n'/\gamma_n\to 0$ as $n\to\infty$,
and also that  $s_n'/\tmixn\to \infty$ implies $d_{E_n}(s_n')\to
0$ by (\ref{ineq:dEexp}). Furthermore, applying (2) of Proposition~\ref{prop:snextra} to both
$(s_n')$ and $(2s_n')$, we have
\begin{equation}\label{eq:snbounds}
\lim_{n\to\infty}2\gamma_n\pidn\mathbf
P^{(n)}(M_{V,V'}>s'_n)= 1
\;\text{ and }\;
\lim_{n\to\infty}2\gamma_n\pidn\mathbf
P^{(n)}(M_{V,V'}\in(s_n',2s_n'])= 0.
\end{equation}
By (1) of Proposition~\ref{prop:main2-1}, taking
$s=s_n'$ and 
$t=2s_n'$, we have for any initial configuration $\xi$, 
\begin{align}\notag
  \Big|\gamma_n\pidn&\E_\xi^{(n)}\left[p_{10}(\xi_{\gamma_n\cdot
      2\delta_n})+p_{01}(\xi_{\gamma_n\cdot
      2\delta_n})\right] -
  p_1(\xi)p_0(\xi)\Big|\\\notag
&\le 2\gamma_n\pidn\mathbf P^{(n)}(M_{V,V'}\in
  (s_n',2s_n']) +8\gamma_n\pidn\mathbf
  P^{(n)}(M_{V,V'}>s_n')d_{E_n}(s_n')\\
&\qquad +\left|2\gamma_n\pidn\mathbf P^{(n)}(M_{V,V'}> s'_n)-1\right|p_1(\xi)p_0(\xi)
.\label{eq:main2-1}
\end{align}
Therefore, \eqref{eq:defvepn} follows from
\eqref{eq:snbounds} and \eqref{eq:main2-1}.

Next, suppose that (ii) of
Theorem~\ref{thm:main2} holds, so
$\gap_n \tmeetn\to\infty$ as $n\to\infty$. 
We consider again the 
product chain, the hitting time of the diagonal and the
meeting time $M_{U,U'}$. 
The product chain is reversible, and has spectral 
gap $\tilde \gap_n=\gap_n/2$ by Lemma~3.2 in
\cite{DS:LSI}. It follows from Proposition
3.23 in \cite{AF:MC} that the hitting time for the diagonal ${\tt D}_n$
is approximately exponentially distributed in the sense that
\eqref{dis:ae} holds. 

We again select a sequence
$(s_n)$ such that $s_n=o(\gamma_n)$ and
\eqref{eq:defsn} holds. The existence of $(s_n)$ is due to the same reason as in the case (i).
Now we choose $(u_n)$ 
such that 
\begin{equation}\label{eq:un2}
\lim_{n\to\infty}u_n=\infty\quad \text{ and }\quad \lim_{n\to\infty}u_n\dfrac{\log(e\vee
  \gamma_n\pinmax q^{(n)}_{\max} )}{\gap_n\gamma_n}= 0,
\end{equation}
and define $(s_n')$ by
\[
s'_n = s_n \vee \dfrac{\gamma_n}{u_n} .
\]
Clearly $\delta_n=s_n'/\gamma_n\to 0$, and
\eqref{eq:snbounds} holds by (2) of Proposition~\ref{prop:snextra}. By (2)
of Proposition~\ref{prop:main2-1} with $s=s_n'$ and
$t=2s_n'$, we get for any initial configuration $\xi$,
\begin{align}\notag
  \Big|\gamma_n\pidn&\E_\xi^{(n)}\left[p_{10}(\xi_{\gamma_n\cdot
      2\delta_n})+p_{01}(\xi_{\gamma_n\cdot
      2\delta_n})\right] -
  p_1(\xi)p_0(\xi)\Big|\\\notag
&\le
2\gamma_n\pidn\mathbf P^{(n)}(M_{V,V'}\in
  (s_n',2s_n'])+\frac{4\pi^{(n)}_{\max}q^{(n)}_{\max}}{\nu_n(\1)}\gamma_n\pidn e^{-\gap_n s'_n}\\
  &\qquad + \left|2\gamma_n\pidn\mathbf P^{(n)}(M_{V,V'}>s'_n)-1\right|p_1(\xi)p_0(\xi). \label{eq:main2-3}
\end{align}
As before, by our choice of $(s_n')$ and 
Proposition~\ref{prop:snextra}, the first term and the third one on the right-hand
side above tend to 0 as $n\to\infty$. 

To show that the second term on the right-hand side of (\ref{eq:main2-3}) also tends to zero, we make some observations for the condition (ii) of 
Theorem~\ref{thm:main2}. Now, $\piDn\to 0$, and so the inequality (\ref{eq:tmeetnlbd}) implies that 
\[
\liminf_{n\to\infty}\gamma_n \pinmax q^{(n)}_{\max}=\liminf_{n\to\infty}\tmeetn \pinmax q^{(n)}_{\max}>0.
\]
On the other hand,
\begin{align*}
\frac{\gap_n\gamma_n}{u_n}-\log \left(e\vee \gamma_n\pinmax q^{(n)}_{\max}\right)=&\log\left (e\vee \gamma_n \pinmax q^{(n)}_{\max}\right)\left(\frac{\gap_n\gamma_n}{u_n\log\big (e\vee \gamma_n\pinmax q^{(n)}_{\max}\big)}-1\right)\\
\geq &\frac{\gap_n\gamma_n}{u_n\log \big(e\vee \gamma_n \pinmax q^{(n)}_{\max}\big)}-1\lra \infty,
\end{align*}
where the convergence follows from the choice of $u_n$ in (\ref{eq:un2}). We deduce from 
the last two displays that
\[
\gap_ns_n'-\log \left(e\vee \gamma_n \pinmax q^{(n)}_{\max}\right)\geq \frac{\gap_n\gamma_n}{u_n}-\log \left(e\vee \gamma_n \pinmax q^{(n)}_{\max} \right)\lra \infty,
\]
which is enough for the desired convergence. The proof is complete.
\end{proof}

We are now ready to prove Theorem~\ref{thm:main2}. 

\begin{proof}[\bf Proof of Theorem~\ref{thm:main2}]
  We have shown in the proof of Lemma~\ref{lem:main2} that all
  of the equivalent conditions of Theorem~\ref{thm:2ndmom}
  hold.  Also, the sequences $(\delta_n)$ and
  $(\vep_n)$ defined in Lemma~\ref{lem:main2} satisfy $\delta_n\lra 0$
  and $\vep_n\lra 0$ as $n\lra\infty$. 
  
Our goal in this proof is to prove the $L^1$-norm version of the mean-field condition, namely (\ref{eq:wmfnorm}) with $p=1$ for any $T>0$. For this, we first note that  \eqref{eq:p1001bnd} gives
\begin{align*}
\E^{(n)}_{\mu_u}\left[\int_0^{2\delta_n}\Big|\gamma_n\pidn[p_{10}(\xi_{\gamma_n
      s})+p_{01}(\xi_{\gamma_n s})]-p_1(\xi_{\gamma_n
      s})p_0(\xi_{\gamma_ns})\Big|ds\right]\\
\le 2\gamma_n\pidn\int_0^{2\delta_n}
\mathbf{P}^{(n)}(M_{V,V'}>\gamma_ns)ds + 2\delta_n,
\end{align*}
and the right-hand side tends to 0
as $n\lra\infty$ by (2) of
Theorem~\ref{thm:2ndmom} and the fact that $\delta_n\lra 0$. Hence, it remains to show that
\begin{equation}\label{eq:mfcgoal}
\lim_{n\to\infty}\E^{(n)}_{\mu_u}\left[\left|\int_{2\delta_n}^T\left(\gamma_n\pidn[p_{10}(\xi_{\gamma_n s})+p_{01}(\xi_{\gamma_n s})]-p_1(\xi_{\gamma_n s})p_0(\xi_{\gamma_ns})\right)ds\right|\right]=0,
\end{equation}
for any $T>0$.

For convenience, we write from now on
\[
\bar p(\xi) \equiv 
p_{10}(\xi) + p_{01}(\xi),
\]
and for any
$s\ge 2\delta_n$,
\begin{align*}
H_n(s)\equiv \gamma_n\pidn \bar p(\xi_{\gamma_n s})
-\En_{\mu_u}\left[\gamma_n\pidn \bar p(\xi_{\gamma_n s})
\big|\F^n_{s-2\delta_n}\right]
\end{align*}
(recall the definition of $\F^n_{t}$ from \eqref{eq:salgebra}). Note that 
$H_n(s)\in \F^n_{s-2\delta_n}$.
Then
\begin{align}
\E^{(n)}_{\mu_u}&\left[\left|\int_{2\delta_n}^T\gamma_n\pidn \bar p(\xi_{\gamma_n s})-p_1(\xi_{\gamma_n s})p_0(\xi_{\gamma_ns})ds\right|\right]\notag\\
\begin{split}\label{eq:aux11}
  &\leq \E^{(n)}_{\mu_u}\Bigg[\Bigg(\int_{2\delta_n}^T H_n(s)ds\Bigg)^2\Bigg]^{1/2}\\
  &\quad+\E^{(n)}_{\mu_u}\Bigg[\int_{2\delta_n}^T\Big|\En_{\mu_u}\left[\gamma_n\pidn \bar p(\xi_{\gamma_n s})
\big|\F_{s-2\delta_n}^n\right]
  -p_1(\xi_{\gamma_n(s-2\delta_n)})
  p_0(\xi_{\gamma_n(s-2\delta_n)})\Big|ds
  \Bigg]\\
  &\quad+\E^{(n)}_{\mu_u}\left[\left|\int_{2\delta_n}^Tp_1(\xi_{\gamma_n(s-2\delta_n)})
      p_0(\xi_{\gamma_n(s-2\delta_n)})-p_1(\xi_{\gamma_ns})p_0(\xi_{\gamma_n
        s}) ds\right|\right],
\end{split}
\end{align}
and so to verify \eqref{eq:mfcgoal} it suffices to prove that
each term on the right-hand side of the above tends to 0 as $n\lra\infty$.

We first prove that the first term on the right-hand side of (\ref{eq:aux11}) tends to zero.
Note that
\begin{align}\label{eq:main2-2}
\E^{(n)}_{\mu_u}\left[\left(\int_{2\delta_n}^T H_n(s)ds\right)^2\right]
=2\E^{(n)}_{\mu_u}\left[\int\!\!\!\int_{2\delta_n\leq r\leq s\leq T} H_n(s)H_n(r) \1_{r>s-2\delta_n}\,dsdr\right].
\end{align}
To justify the restriction ``$\1_{r>s-2\delta_n}$'' for the right-hand side, we note that
for $2\delta_n\leq r<s-2\delta_n$, 
\[
\E^{(n)}_{\mu_u }\left[H_n(s)|\F^n_r\right]=0, 
\]
and hence, we obtain by conditioning on $\F_r^n$ that 
\[
\E^{(n)}_{\mu_u}[H_n(s)H_n(r)]=0,\quad 2\delta_n\leq r<s-2\delta_n.
\]
Now expanding 
$H_n(r)H_n(s)$, we obtain 
\begin{align}
\E^{(n)}_{\mu_u}&\left[\int\!\!\!\int_{2\delta_n\leq r\leq s\leq T} \,drds \,H_n(s)H_n(r) \1_{r>s-2\delta_n}\right]\notag\\
\begin{split}\label{eq:expand}
  &=\E_{\mu_u}^{(n)}\left[\int_{2\delta_n}^Tdr\int_r^{T\wedge
      (r+2\delta_n)}ds\,\big(\gamma_n\pidn\big)^2 \bar
    p(\xi_{\gamma_nr})\bar p(\xi_{\gamma_n s})\right]\\
  &\quad-\int_{2\delta_n}^Tdr\int_r^{T\wedge
    (r+2\delta_n)}ds\,\E^{(n)}_{\mu_u}\Big[\big(\gamma_n\pidn\big)^2\En_{\mu_u}
\left[\bar p(\xi_{\gamma_nr})
    |\F^n_{r-2\delta_n}\right]\bar p(\xi_{\gamma_n s})\Big]\\
  &\quad-\int_{2\delta_n}^Tdr\int_r^{T\wedge(r+2\delta_n)}ds
\, \E^{(n)}_{\mu_u}\Big[\big(\gamma_n\pidn\big)^2   
  \bar p(\xi_{\gamma_n r})
 \En_{\mu_u}\left[\bar p(\xi_{\gamma_n
      s})\big|\F^n_{s-2\delta_n}\right]\Big]\\ 
  &\quad +\int_{2\delta_n}^Tdr\int_r^{T\wedge (r+2\delta_n)}ds
  \, \E^{(n)}_{\mu_u}\Big[\big(\gamma_n\pidn\big)^2\En_{\mu_u}\left[\bar
    p(\xi_{\gamma_n r})\big|\F^n_{r-2\delta_n}\right] \\
 &\qquad\qquad\times\En_{\mu_u}\big[\bar p(\xi_{\gamma_n
     s})\big|\F^n_{s-2\delta_n}\big]\Big]. 
\end{split}
\end{align}

We will show that each of the four terms on the right-hand
side of the last equality tends to zero as $n\to\infty$.
To do this we first state three facts which we will use
repeatedly. By our choice of
$s'_n$ and $\delta_n=s_n'/\gamma_n$ in
Lemma~\ref{lem:main2}, and by
Proposition~\ref{prop:snextra}, 
\begin{equation}\label{eq:gammandeltan}
\lim_{n\to\infty} 2\gamma_n\pidn \mathbf{P}^{(n)}(M_{V,V'}>
2\gamma_n \delta_n ) = 1.
\end{equation}
By (2) of Theorem~\ref{thm:2ndmom}, for each $t>0$,
\begin{equation}\label{eq:kt}
K_t = \sup_{n\in \Bbb N} 2\gamma_n\pidn\int_0^t
\mathbf P^{(n)}(M_{V,V'}>\gamma_n s)ds <\infty .
\end{equation}
Finally, by Markov property and \eqref{eq:p1001bnd}, we have for
$r<s$, 
\begin{equation}\label{eq:goodbnd}
\En_{\mu_u} \left[ \bar p(\xi_{\gamma_n s}) \big|\F^n_{r} \right]=
\En_{\xi_{\gamma_n r}}\left[ \bar p(\xi_{\gamma_n (s-r)})
\right]\le 2\mathbf{P}^{(n)}\big(M_{V,V'}>\gamma_n (s-r) \big).
\end{equation}

\medskip We start with the first term on the right-hand side of
(\ref{eq:expand}), arguing in more detail than we will for the other terms. By
conditioning at time $r<s$ and using \eqref{eq:goodbnd}
repeatedly, we obtain
\begin{align*}
  \E^{(n)}_{\mu_u}&\left[\int_{2\delta_n}^Tdr\int_r^{T\wedge
      (r+2\delta_n)}ds\,\big(\gamma_n\pidn\big)^2\,
    \bar p(\xi_{\gamma_nr})\,
    \bar p(\xi_{\gamma_n s})\right]\\ 
  &=\int_{2\delta_n}^Tdr\int_r^{T\wedge
      (r+2\delta_n)}ds\,
  \E^{(n)}_{\mu_u}\Big[\gamma_n\pidn
  \bar p(\xi_{\gamma_nr})
  \En_{\mu_u}   \Big[\gamma_n\pidn \bar
  p(\xi_{\gamma_ns})\big|\F^n_{r} \Big]\Big]\\  
  &\leq\int_{2\delta_n}^Tdr\int_r^{T\wedge
      (r+2\delta_n)}ds\,\E^{(n)}_{\mu_u}\Big[\gamma_n\pidn 
  \bar p(\xi_{\gamma_nr})
   \Big]
  2\gamma_n\pidn \mathbf{P}^{(n)}\big(M_{V,V'}>\gamma_n(s-r)\big)
\\  
  &\leq2\gamma_n\pidn \int_{2\delta_n}^T
    \mathbf{P}^{(n)}(M_{V,V'}>\gamma_nr )dr\times
2\gamma_n\pidn  \int_0^{2\delta_n} \mathbf{P}^{(n)}(
M_{V,V'}>\gamma_ns) ds
  \\ 
&\le K_T \times 2\gamma_n\pidn\int_0^{2\delta_n} \mathbf{P}^{(n)}(
M_{V,V'}>\gamma_ns)\,ds
  \lra 0 \quad
  \text{ as }n\lra\infty,
\end{align*}
where we have used \eqref{eq:p1001bnd},
\eqref{eq:kt} and (2) of Theorem~\ref{thm:2ndmom}. 

\medskip
For the second term on the right side of (\ref{eq:expand}),
again applying \eqref{eq:goodbnd}
repeatedly, we obtain
\begin{align*}
0&\leq\int_{2\delta_n}^Tdr\int_r^{T\wedge
  (r+2\delta_n)}ds\, \En_{\mu_u}\left[\big(\gamma_n\pidn\big)^2
\En_{\mu_u}\left[\bar p(\xi_{\gamma_nr})
\big|\F^n_{r-2\delta_n}\right]
\bar p(\xi_{\gamma_n s})\right]\\
&\leq\int_{2\delta_n}^Tdr\int_r^{T\wedge (r+ 2\delta_n)} ds
\,2\gamma_n\pidn
\mathbf{P}^{(n)}(M_{V,V'}>\gamma_n2\delta_n)
2\gamma_n\pidn \mathbf{P}^{(n)}(M_{V,V'}>\gamma_ns)\\
&\leq\int_{2\delta_n}^Tdr\int_r^{T\wedge (r+ 2\delta_n)} ds
\, [2\gamma_n\pidn
\mathbf{P}^{(n)}(M_{V,V'}>2\gamma_n\delta_n)]^2\\
&\leq 2\delta_nT \times \big(2\gamma_n\pidn
\mathbf{P}^{(n)}(M_{V,V'}>2\gamma_n\delta_n)\big)^2
\lra 0 \quad\text{ as }n\lra\infty,
\end{align*}
where we have made use of the fact that $s\ge r\ge
2\delta_n$ above, \eqref{eq:gammandeltan} 
and the fact that $\delta_n\to 0$. 

The third term on the right-hand side of
(\ref{eq:expand}) is slightly different from the previous one. Now, we use \eqref{eq:goodbnd} in the following way:
\begin{align*}
0&\leq\int_{2\delta_n}^Tdr\int_r^{T\wedge
  (r+2\delta_n)}ds\, \En_{\mu_u}\left[\big(\gamma_n\pidn\big)^2
\En_{\mu_u}\left[\bar p(\xi_{\gamma_ns})
\big|\F^n_{s-2\delta_n}\right]
\bar p(\xi_{\gamma_n r})\right]\\
&\leq\int_{2\delta_n}^Tdr\int_r^{T\wedge (r+ 2\delta_n)} ds
\, 2\gamma_n\pidn
\mathbf{P}^{(n)}(M_{V,V'}>2\gamma_n\delta_n)
2\gamma_n\pidn \mathbf{P}^{(n)}(M_{V,V'}>\gamma_nr)\\
&\leq 2\delta_n\times
2\gamma_n\pidn
\mathbf{P}^{(n)}(M_{V,V'}>\gamma_n2\delta_n)
\int_{0}^Tdr
2\gamma_n\pidn \mathbf{P}^{(n)}(M_{V,V'}>\gamma_nr)
\\
& \le 2\delta_n\times 2\gamma_n\pidn
\mathbf{P}^{(n)}(M_{V,V'}>\gamma_n2\delta_n)\times K_T
\lra 0\quad \text{ as }n\lra\infty,
\end{align*}
which follows from (\ref{eq:gammandeltan}), (\ref{eq:kt}), and the fact that $\delta_n\to 0$. 

\medskip
Finally, for the last term on the right-hand side of (\ref{eq:expand}), the bound \eqref{eq:goodbnd} remains useful and we get
\begin{align*}
0\leq &\int_{2\delta_n}^Tdr\int_r^{T\wedge (r+2\delta_n)}ds
\, \En_{\mu_u}\Big[\En_{\mu_u}\left[\gamma_n\pidn\bar p(\xi_{\gamma_n
    r})\big|\F^n_{r-2\delta_n}\right]\\ 
&\qquad \times\En_{\mu_u}\left[\gamma_n\pidn\bar
  p(\xi_{\gamma_n s}) |\F^n_{s-2\delta_n}\right]\Big]\\
&\le \int_{2\delta_n}^Tdr\int_r^{T\wedge (r+2\delta_n)}ds
\, \Big(2\gamma_n\pidn \mathbf{P}^{(n)}(M_{V,V'}>2\gamma_n\delta_n)\Big)^2\\
&\leq 2\delta_nT\times\Big(2\gamma_n\pidn\mathbf
P^{(n)}(M_{V,V'}>2\gamma_n\delta_n)\Big)^2\lra 0\quad \text{
  as }n\lra\infty
\end{align*}
since $\delta_n\to 0$ and we have (\ref{eq:gammandeltan}).
We have thus verified the desired convergence for the first term of (\ref{eq:aux11}).

We now make some observations for the other two terms in (\ref{eq:aux11}).
To
handle the second term, we apply the Markov property of the
voter model to the integrand at time $s-2\delta_n$. It
follows from Lemma~\ref{lem:main2} that the integrand 
\[
\Big|\En_{\mu_u}\big[\gamma_n\pidn \bar p(\xi_{\gamma_n s})
\big|\F_{s-2\delta_n}^n\big]
  -p_1\big(\xi_{\gamma_n(s-2\delta_n)}\big)
  p_0\big(\xi_{\gamma_n(s-2\delta_n)}\big)\Big|
\]
is
uniformly bounded by $\vep_n$, so that the second term is no
larger than $\vep_nT$. By a simple change-of-variable argument, the third term
above is easily seen to be bounded by $4\delta_n$.  
Since both of the sequences
$(\vep_n)$ and $(\delta_n)$ tend to zero, the last two terms in (\ref{eq:aux11}) both tend to zero.
This completes
the proof of Theorem~\ref{thm:main2}. 
\end{proof}

\section{Coalescence times and density processes}\label{sec:cmc}
Let $(D_t)$ be the pure-death process on $\bN$ which jumps from
  $k$ to $k-1$ at rate $\binom k2$, $k\ge 2$. Set $Z_1=\infty$, and recall that we let $Z_2,Z_3,\cdots$ be independent exponential variables with mean $\mathbf E[Z_j]=1/{j\choose 2}$. For any integer $k\geq 2$, it is easy to see from independence of $Z_{j}$ and $\sum_{i=j+1}^kZ_i$ that
\[
\mathbf{P}_k(D_t=j) = \mathbf P\left(\sum_{i=j+1}^k Z_i\leq  t <
  \sum_{i=j}^k Z_i
\right), \qquad 1\le j\le k
\]
Furthermore, $(D_t)$ and the Wright-Fisher diffusion $(Y_t)$ are linked by the following
duality equation (see Equation (7.21) of \cite{Ta}):
\begin{equation}\label{eq:dualityWF}
\E_u[ Y^k_t] = \mathbf{E}_k[u^{D_t}],
\quad \forall\; u\in[0,1],\; k\in\bN,\;t\in \R_+. 
\end{equation}
The proofs of Proposition~\ref{prop:ccmc} and Proposition~\ref{prop:ccmc2} are both based on this simple equality.

\begin{proof}[\bf Proof of Proposition~\ref{prop:ccmc}]
 Let us fix $t>0$ and $k\ge 2$. By the duality equation
  \eqref{eq:duality},  and the
  fact that the initial law of $\xi_0$ is $\mu_u$, 
\begin{align}
\E_{\mu_u}^{(n)}\Big[\big(p_1(\xi_{\gamma_n t})\big)^k\Big] = &
\sum_{j=1}^ku^j\mathbf{P}^{(n)}\left(\left|\left\{\hat X^{U_1}_{t\gamma_n},\dots,
\hat X^{U_k}_{t\gamma_n}\right\}\right| =j\right) 
\notag\\
=&\sum_{j=1}^ku^j\mathbf P^{(n)}\left(\cnkj \leq \gamma_n t<
  {\sf C}^{(n)}_{k,j-1}\right) , \label{eq:ccnv}
\end{align}
with the convention that ${\sf C}^{(n)}_{k,k}=0$. 
On the other hand, by assumption and the 
duality equation \eqref{eq:dualityWF},
\begin{align}
\lim_{n\to\infty}\E_{\mu_u}^{(n)}\left[\big(p_1(\xi_{\gamma_n t})\big)^k\right]=& \E_u[Y^k_t]\notag \\
=&\mathbf E_k[u^{D_t}]
=\sum_{j=1}^k u^j \mathbf P_k[D_t=j]\notag\\
=&\sum_{j=1}^k u^j \mathbf P\left(\sum_{i=j+1}^k Z_i\leq t<\sum_{i=j}^k
Z_i\right).\label{eq:ccnv2}
\end{align}
Combining \eqref{eq:ccnv} and
\eqref{eq:ccnv2} we see that 
\[
\lim_{n\to\infty}\sum_{j=1}^ku^j\mathbf P^{(n)}\left(\cnkj \leq \gamma_n t<
  {{\sf C}^{(n)}_{k,j-1}}\right) =
 \sum_{j=1}^k u^j \mathbf P\left(\sum_{i=j+1}^k Z_i\leq t<\sum_{i=j}^k
Z_i\right).
\]
The foregoing equality holds for all $u\in[0,1]$, so
it must be the case that
\[
\lim_{n\to\infty}\mathbf{P}^{(n)}\left(\cnkj \leq \gamma_n t<{{\sf
    C}^{(n)}_{k,j-1}}\right) =
\mathbf{P}\left (\sum_{i=j+1}^k
  Z_i\leq t<\sum_{i=j}^kZ_i\right),\quad \forall\; 1\leq j\leq k.
\]
It follows by dominated convergence that for any $\lambda> 0$
\[
\lim_{n\to\infty}\int_0^\infty \lambda e^{-\lambda t}\mathbf{P}^{(n)}\left(\cnkj \leq \gamma_n t<{{\sf
    C}^{(n)}_{k,j-1}}\right)dt =
\int_0^\infty \lambda e^{-\lambda t}\mathbf{P}\left (\sum_{i=j+1}^k
  Z_i\leq t<\sum_{i=j}^kZ_i\right)dt
\]
and hence 
\[
\lim_{n\to\infty}\left(\mathbf{E}^{(n)}\big[e^{-\lambda {\sf
    C}^{(n)}_{k,j-1}/\gamma_n}\big]-\mathbf E^{(n)}\big[e^{-\lambda{\sf
    C}^{(n)}_{k,j}/\gamma_n }\big]\right)=\mathbf E\big[e^{-\lambda \sum_{i=j}^kZ_i}\big]-\mathbf E\big[e^{-\lambda \sum_{i=j+1}^kZ_i}\big]
\]
for any $1\leq j\leq k$
and our assertion follows plainly. 
\end{proof}

\begin{proof}[\bf Proof of Proposition~\ref{prop:ccmc2}]
The proof of Proposition~\ref{prop:ccmc2} is a slight generalization of Proposition~\ref{prop:ccmc}, so we will skip some details. We start with two equalities.
First, as in (\ref{eq:ccnv}), we have
\begin{align}\label{ineq:tauWF0}
\begin{split}
\P_{\mu_u}^{(n)}\left(\tau^{(n)}_1\leq \gamma_nt\right)=&\E_{\mu_u}^{(n)}\left[\prod_{x\in E_n}\xi_{\gamma_nt}(x)\right]\\
=&\sum_{j=1}^{|E_n|}u^j\mathbf P^{(n)}\left(\hat{\sf C}^{(n)}_{j} \leq \gamma_n t<
 \hat{\sf C}^{(n)}_{j-1}\right).
 \end{split}
\end{align}
Also by (\ref{eq:dualityWF}), we have
\begin{align}\label{ineq:tauWF2}
\P_u\left(\tau_1^{Y}\leq t\right)=\lim_{k\to\infty}\E_u[Y_t^k]=\sum_{j=1}^\infty u^j
\mathbf P\left(\sum_{i=j+1}^\infty Z_i\leq t<\sum_{i=j}^\infty Z_i\right).
\end{align}
That (\ref{eq:full2}) implies (\ref{eq:full1}) now follows from the two displays (\ref{ineq:tauWF0}) and (\ref{ineq:tauWF2}) and dominated convergence.

The converse also uses the same two displays, but now we need another elementary result: For any nonnegative $a^n_j$, for $n,j\in \Bbb N$, with $\sum_j a^n_j\leq 1$, the condition that
\begin{align}\label{ineq:tauWF3}
\lim_{n\to\infty}\sum_j a^n_ju^j\quad \mbox{exists for every $u\in (0,1)$}
\end{align}
is enough to obtain that $\lim_{n\to\infty}a^n_j$ exists for every $j\in \Bbb N$. Indeed, if $(n_k)$ and $(n'_k)$ are two subsequences such that $\lim_{k\to\infty}a^{n_k}_j$ and $\lim_{k\to\infty}a^{n'_k}_j$ exist for all $j\in \Bbb N$, then
the limits are all in $[0,1]$,
and so by dominated convergence
(\ref{ineq:tauWF3}) implies
\begin{align*}
\sum_j \left(\lim_{k\to\infty}a^{n_k}_j\right)u^j=\sum_j \left(\lim_{k\to\infty}a^{n'_k}_j\right)u^j,\quad u\in [0,1).
\end{align*}
We deduce from these that 
$\lim_{k\to\infty}a^{n_k}_j=\lim_{k\to\infty}a^{n'_k}_j$ for all $j\in \Bbb N$, which, by a diagonal argument on selecting convergent subsequences of $(a^n_j)_{n\in \Bbb N}$ each $j\in \Bbb N$, is enough for our claim that $\lim_{n\to\infty}a^n_j$ exists for every $j\in \Bbb N$.

Using this elementary result, and assuming (\ref{eq:full1})
so that the right-hand side of (\ref{ineq:tauWF0}) converges to the right-hand side of (\ref{ineq:tauWF2}), we obtain that for every $j\in \Bbb N$, 
\[
\lim_{n\to\infty}\mathbf P^{(n)}\left(\hat{{\sf C}}^{(n)}_{j} \leq \gamma_nt < \hat{\sf C}^{(n)}_{j-1}\right)
\]
must exist and this limit must be
\[
\mathbf P\left(\sum_{i=j+1}^\infty Z_i\leq t<\sum_{i=j}^\infty Z_i\right).
\]
This establishes (\ref{eq:full2}), and the proof is complete.
\end{proof}

\section{Examples}\label{sec:eg}
 In this section, we consider various sequences of $(q^{(n)},E_n)$-Markov chains for which 
one of the conditions of
Theorem~\ref{thm:main2} and Corollary~\ref{cor:main} applies, and hence the convergence of
the corresponding voter model densities in \eqref{eq:main1} holds.

The $Q$-matrices $q^{(n)}$ considered below are of the form $q^{(n)}=p^{(n)}-{\rm Id}_{E_n}$, where $p^{(n)}$ is a symmetric probability matrix but not necessarily has zero diagonal. 
In this case, $1-\lambda$ is an eigenvalue of $-q^{(n)}$ if and only if $\lambda$ is an eigenvalue of $p^{(n)}$. If in addition $p^{(n)}$ has zero diagonal, the
inequality \eqref{eq:meetlowerbound} for such a particular $Q$-matrix $q^{(n)}$ becomes
\begin{equation}\label{eq:meetlowerbound2}
\tmeetn \ge \frac{(|E_n|-1)^2}{4|E_n|} .
\end{equation}
All our examples below can be viewed as random walks on graphs,
although we do not use this language for the
examples in Section~\ref{eg:sdt} which include and generalize Theorem~2 of \cite{C:CRW89}.

\subsection{Discrete tori}\label{eg:sdt}
  For $n,d\in\Bbb N$, we consider irreducible
  $(q^{(n)},E_n)$-Markov chains where for $d,n\in \Bbb N$,  
\[
E_n=\big( (-n/2,n/2]\cap
\bZ\big)^d
\]
and $q^{(n)}(x,y)=q^{(n)}(0,y-x)$ for $x\neq y$. Here, the 
difference $y-x$ is read coordinate-wise $\mod n$. By the
assumed symmetry of $q^{(n)}$, the bound
\eqref{eq:meetlowerbound2} applies.

\subsubsection{Nearest-neighbor walk}
Assume $d\ge 2$ and $q^{(n)}(x,y)=(2d)^{-1}$ if $|x-y|=1$ (the
difference is computed $\mod n$ coordinate-wise). Then as $n\lra\infty$,
$\tmixn=O(n^2)$ in all dimensions $d$ (see Theorem 5.5 in \cite{LPW}) and 
\begin{equation}\label{eq:nnrw}
\tmeetn \sim \begin{cases}
\dfrac{1}{2\pi}|E_n|\log |E_n| &\text{if }d=2,\\
G_d|E_n| &\text{if }d\ge 3,
\end{cases}
\end{equation}
where the constant $G_d$ is the expected number of visits to the
origin by a simple symmetric random walk in $\bZ^d$ starting
at the origin (see \cite{C:CRW89}).  Hence, (i) of
Theorem~\ref{thm:main2} holds, and we have the convergence of
voter model densities to the Wright-Fisher diffusion in
\eqref{eq:main1}. This result was first obtained in Theorem~2 of
\cite{C:CRW89}.

\begin{rmk}\label{rmk:rec_trans}
As in Section 13.2.3 in \cite{AF:MC}, we say that the sequence
$\big(q^{(n)},E_n\big)_{n\in \Bbb N}$ is transient if
$\sup_{n\in \Bbb N}\tmeetn /|E_n|$ is finite, and is
  recurrent otherwise. We note that the
asymptotic behavior in \eqref{eq:nnrw} indicates recurrence for $d=2$ and
transience for $d\ge3$. This is consistent with the fact that simple symmetric
random walk on $\Bbb Z^d$ is recurrent if $d=2$ and is
transient if $d\geq 3$.

With this notion in mind, we note that
\eqref{eq:meetlowerbound2} gives the correct asymptotic rate
of growth for $\tmeetn$ for the transient case $d\ge 3$, but not for the
recurrent case $d=2$. \qed
\end{rmk}

\subsubsection{Intermediate-range random walk}\label{eg:int}
We consider the random walks studied in 
\cite{C:CRW2010}, which have range tending to infinity.
Let $(m_n)$ be a sequence of positive integers such that
$m_n<n/2$ for all $n$ and $m_n\to\infty$. 
For any $d\ge 1$, let 
\begin{equation}\label{eq:lambdan}
\Lambda_n=\Lambda^d_n=([-m_n,m_n]\cap\bZ)^d\setminus \{0\},
\end{equation}
and put 
\begin{equation}\label{eq:qlambdan}
q^{(n)}(x,y) = |\Lambda_n|^{-1}\text{ if } y-x \in \Lambda_n
\end{equation}
(again the difference $y-x$ is read mod $n$ coordinate-wise).

\begin{prop}\label{prop:imm1}
Assume $d=2$ and 
\begin{equation}\label{eq:mn2d}
\lim_{n\to\infty}\dfrac{m^2_n}{\log n}=0.
\end{equation}
Then
\begin{align}
&\tmix^{(n)}=O(n^2/m^2_n) \quad\text{ as }n\to\infty,\label{int1}\\
&\liminf_{n\to\infty}\dfrac{\tmeetn}{n^2\log n/m^2_n}>0 .\label{int2}
\end{align}
\end{prop}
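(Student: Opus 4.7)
The plan is Fourier analysis on the abelian group $E_n=(\bZ/n\bZ)^2$. Since $q^{(n)}$ is convolution with the uniform measure on $\Lambda_n$, the characters $\chi_k(y)=\exp(2\pi \mathrm{i}\,k\cdot y/n)$ with $k$ ranging over $\widehat E_n:=((-n/2,n/2]\cap\bZ)^2$ diagonalize $-q^{(n)}$, producing eigenvalues $\mu_n(k)=1-\hat p_n(k)$ where
\[
\hat p_n(k)=\frac{1}{|\Lambda_n|}\sum_{y\in\Lambda_n}\chi_k(y)=\frac{1}{|\Lambda_n|}\left(\prod_{j=1}^{2}\frac{\sin\big(\pi k_j(2m_n+1)/n\big)}{\sin(\pi k_j/n)}-1\right).
\]
The stationary distribution is uniform on $E_n$.

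The core input is a two-regime estimate on $\mu_n(k)$. Set $A_n:=\{k\in\widehat E_n\setminus\{0\}:\|k\|_\infty\le n/(4m_n)\}$. On $A_n$, the symmetry of $\Lambda_n$ kills the linear Taylor term and a direct expansion with remainder $O((m_n|k|/n)^4)$ produces absolute constants $0<c_1<c_2$ with
\[
c_1\,\frac{m_n^2|k|^2}{n^2}\;\le\;\mu_n(k)\;\le\;c_2\,\frac{m_n^2|k|^2}{n^2},\qquad k\in A_n.
\]
On $A_n^c$ at least one coordinate $k_j$ satisfies $|k_j|m_n/n\ge 1/4$; bounding $|\sin(\pi k_j(2m_n+1)/n)|\le 1$ and $|\sin(\pi k_j/n)|\ge 2|k_j|/n$ in the Dirichlet kernel formula shows that the ratio, divided by $|\Lambda_n|\sim 4m_n^2$, is bounded by a constant strictly less than $1$ uniformly in $n$, which yields a universal $\mu_n(k)\ge c_3>0$.

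For \eqref{int1}, I would apply the standard $L^2$ bound
\[
4\,d_{E_n}(t)^2\;\le\; \sum_{k\in\widehat E_n\setminus\{0\}}e^{-2t\mu_n(k)}
\]
at $t=Cn^2/m_n^2$. After the substitution $u=m_nk/n$, the $A_n$-contribution is comparable to $\int_{\R^2}e^{-2Cc_1|u|^2}\,du=O(1/C)$, while the $A_n^c$-contribution is at most $|E_n|\,e^{-2Cc_3 n^2/m_n^2}$, which vanishes as $n\to\infty$ because the hypothesis $m_n^2=o(\log n)$ forces $n^2/m_n^2\gg \log n$. Choosing $C$ large gives $d_{E_n}(t)\le 1/(2e)$, hence \eqref{int1}.

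For \eqref{int2}, symmetry of $q^{(n)}$ implies that the difference $Y_t:=X_t-X'_t$ of two independent $q^{(n)}$-walks is itself a Markov chain on $E_n$ with generator $2q^{(n)}$, and $M_{U,U'}$ has the same law as the hitting time of $0$ by $Y$ started from the uniform distribution. The classical spectral formula for expected hitting times from stationary on an abelian group with uniform stationary measure (derivable from $\mathbf E_\pi[\tau_0]=|E_n|G(0,0)$ and the Fourier expansion of the Green's function) reads
\[
\tmeetn=\sum_{k\in\widehat E_n\setminus\{0\}}\frac{1}{2\mu_n(k)}.
\]
Restricting this sum to $A_n$ and invoking the upper bound on $\mu_n$ gives $\tmeetn\gtrsim (n^2/m_n^2)\sum_{k\in A_n}|k|^{-2}$, and a comparison of the two-dimensional sum to $\int_1^{n/(4m_n)}r^{-1}\,dr$ yields $\sum_{k\in A_n}|k|^{-2}\gtrsim \log(n/m_n)$. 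Since $m_n^2=o(\log n)$ forces $\log m_n=o(\log n)$, we have $\log(n/m_n)\sim \log n$, and \eqref{int2} follows. The main technical obstacle I anticipate is the uniform lower bound $\mu_n(k)\ge c_3$ on $A_n^c$: the Dirichlet kernel identity has to be controlled simultaneously in the regimes where each $|k_j|/n$ is small and where $|k_j|/n$ is comparable to $1/2$, which requires a careful case split.
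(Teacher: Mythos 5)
Your proposal is correct in strategy but follows a genuinely different route from the paper. The paper's proof of \eqref{int1} and \eqref{int2} outsources the hard analysis to \cite{C:CRW2010}: it quotes Theorem~1.7 there for the uniform convergence of $q^{(n)}_{s_n}(0,\cdot)$ to the uniform distribution when $s_n\gg n^2/m_n^2$, and for \eqref{int2} it combines the identity $\tmeetn=\tfrac12\mathbf{E}_{\pi^{(n)}}[H_0]$ with the Laplace-transform formula $\mathbf{E}_{\pi^{(n)}}[e^{-\lambda H_0}]=(n^2\lambda)^{-1}/G_n(0,\lambda)$ and the precise Green's function asymptotics (6.1) of \cite{C:CRW2010}, which even yields the sharp constant $12/\pi$. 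Your argument is self-contained: the same reduction $\tmeetn=\tfrac12\mathbf{E}_{\pi^{(n)}}[H_0]$, but then the eigentime/random-target identity $\mathbf{E}_{\pi^{(n)}}[H_0]=\sum_{k\neq 0}\mu_n(k)^{-1}$ together with explicit Fourier bounds on the eigenvalues, and the transitive-chain $L^2$ bound for the mixing time. What you lose is the sharp constant in \eqref{int2} (irrelevant, since only $\liminf>0$ is claimed); what you gain is independence from the external reference and a transparent picture of where $n^2\log n/m_n^2$ comes from ($\sum_{0<|k|\lesssim n/m_n}|k|^{-2}\asymp\log(n/m_n)\sim\log n$ under \eqref{eq:mn2d}).

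One step as written does not deliver what you claim. On $A_n^{\complement}$ you bound $|\sin(\pi k_j(2m_n+1)/n)|\le 1$ and $|\sin(\pi k_j/n)|\ge 2|k_j|/n\ge 1/(2m_n)$, giving $|D(k_j)|\le 2m_n$ for the large coordinate and $|D(k_i)|\le 2m_n+1$ for the other; the resulting ratio is $\frac{2m_n(2m_n+1)+1}{4m_n(m_n+1)}\to 1$, so this yields only $\mu_n(k)\gtrsim 1/m_n$, \emph{not} a uniform constant $c_3>0$. The constant bound is nevertheless true: since $\pi|k_j|/n\in[\pi/(4m_n),\pi/2]$ and $\sin$ is increasing there, one has the sharper estimate $|\sin(\pi k_j/n)|\ge\sin\big(\pi/(4m_n)\big)=\tfrac{\pi}{4m_n}(1+O(m_n^{-2}))$, whence $|D(k_j)|\le\tfrac{4m_n}{\pi}(1+o(1))$ and the ratio tends to $2/\pi<1$, giving $\mu_n(k)\ge 1-2/\pi-o(1)$. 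Alternatively, even the weaker bound $\mu_n(k)\ge c/m_n$ suffices for your purposes: the $A_n^{\complement}$ contribution to the mixing estimate becomes $n^2e^{-2Cc\,n^2/m_n^3}$, which still vanishes because \eqref{eq:mn2d} forces $n^2/m_n^3\gg\log n$, and the lower bound \eqref{int2} uses only the $A_n$ part of the spectral sum. (A cosmetic point: at $t=Cn^2/m_n^2$ the $A_n$ contribution is simply $\sum_{k\in\bZ^2\setminus\{0\}}e^{-2Cc_1|k|^2}$, which is already small for large $C$; no substitution $u=m_nk/n$ is needed or correct there.)
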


Taken together, \eqref{int1} and \eqref{int2} imply
condition (i) of Theorem~\ref{thm:main2}, and so we have the
convergence of voter model densities in \eqref{eq:main1}.  

\begin{proof}[Proof of Proposition~\ref{prop:imm1}]
To obtain \eqref{int1} and \eqref{int2}, we make use of
results from of \cite{C:CRW2010}.  Since conditions
(P1)--(P3) in \cite{C:CRW2010} hold by Proposition~1.1
there,  we deduce from Theorem~1.7 of \cite{C:CRW2010} that
if $\lim_{n\to\infty}s_n/(n^2/m^2_n) =\infty$, then
\[ 
\sum_{x\in E_n} \left|q^{(n)}_{s_n}(0,x)-\pi_{n}(x)\right|
\le n^2\sup_{x\in E_n}\left|q^{(n)}_{s_n}(0,x)-\frac{1}{n^2}\right| \to 0,
\]
which implies \eqref{int1}.

Next, to get (\ref{int2}), we first reduce $\tmeetn$ to a simpler time.
Let $(X_t)$ be the Markov chain on $E_n$ given by
$q^{(n)}$, and
\[
H_0=\inf\{t\geq 0: X_t=0\}
\]
be the
hitting time of 0. Since the difference of two rate-one random walks is a rate-two random walk (see also Proposition
7.1 and Proposition 14.5 of \cite{AF:MC} for a more general
fact), we have
\begin{align}\label{halftime}
\mathbf{E}^{(n)}[M_{U,U'}] =
\frac{\mathbf{E}^{(n)}_{\pi^{(n)}}[H_0]}{2}.
  \end{align}
The limit \eqref{int2} can then be derived from the estimates on
the expectations $\mathbf{E}^{(n)}_{x}[H_0]$ given in
Theorem~1.3 in \cite{C:CRW2010}, but we will use instead the
following simpler argument, which relies on only (6.1) from
\cite{C:CRW2010}.

We now claim
\begin{align}\label{int2-1}
\liminf_{n\to\infty} \dfrac{\mathbf{E}^{(n)}_{\pi^{(n)}}[H_0]}{n^2\log n/m_n^2}\ge
\frac{12}{\pi},
\end{align}
as entails (\ref{int2}) by (\ref{halftime}). For $x\in E_n$ and $\lambda>0$,
let 
\[
G_n(x,\lambda) = \int_0^\infty e^{-\lambda
  s}q^{(n)}_s(0,x)ds. 
\]
By standard Markov chain arguments,
\begin{equation}\label{eq:mctransform}
\mathbf{E}^{(n)}_x[\exp(-\lambda H_0)] =
\dfrac{G_n(x,\lambda)}{G_n(0,\lambda)}.
\end{equation}
Clearly $
\sum_{x\in E_n} \pi^{(n)}(x)G_n(x,\lambda)=\frac{1}{n^2\lambda}
$, 
and thus by \eqref{eq:mctransform}, 
\begin{align}\label{eq:mctransform1}
\ds 
\mathbf{E}^{(n)}_{\pi^{(n)}}[e^{-\lambda H_0}] = \dfrac{(n^2\lambda)^{-1}}{
  G_n(0,\lambda)}.
\end{align}
According to (6.1) of \cite{C:CRW2010} with $t_n= \log n/m^2_n$, we have
\[
\lim_{n\to\infty}\frac{G_n\left(0,\frac{\lambda}{n^2t_n}\right)}{t_n} = \lambda^{-1} + \frac{12}{\pi}.
\]
Applying this fact to (\ref{eq:mctransform1}), we get
\[
\lim_{n\to\infty}\mathbf{E}^{(n)}_{\pi^{(n)}}[e^{-\lambda H_0/n^2 t_n}]= \dfrac{1}{1+\frac{12}{\pi}\lambda}.
\]
We deduce (\ref{int2-1}) from the Skorokhod representation and Fatou's lemma. The proof is complete
\end{proof}

In view of \eqref{int2},
the condition $m^2_n/\log n\to 0$ implies 
$\tmeetn/|E_n|\to\infty$. This means the Markov chain
sequences considered in this example are, like the nearest-neighbor
$d=2$ case, recurrent. Also, although we will not give the
details here, Theorem~\ref{thm:main2} still holds if instead of 
\eqref{eq:mn2d} we consider the (transient) case
in which \eqref{eq:mn2d} is replaced with $\lim_{n\to\infty}m^2_n/\log n=\infty$.

\begin{prop}\label{prop:imm2}
Assume $d=1$ and
\begin{equation}\label{eq:mn1d}
\lim_{n\to\infty}\dfrac{m_n}{\sqrt n} =\infty .
\end{equation}
For $q^{(n)}$ given as in 
$\eqref{eq:qlambdan}$, let $\gap_n$ denote the spectral gap of $q^{(n)}$. Then 
\begin{equation}\label{eq:gapgoal}
\lim_{n\to\infty}n\gap_n=\infty.
\end{equation}
\end{prop}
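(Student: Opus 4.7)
The plan is to diagonalize $p^{(n)}$ via Fourier analysis on the cyclic group $\mathbb Z/n\mathbb Z$. Since $\Lambda_n=\{\pm 1,\dots,\pm m_n\}$ is symmetric, the characters $\chi_k(x)=e^{2\pi i kx/n}$ ($k\in E_n$) are eigenvectors of $p^{(n)}$ with real eigenvalues
\[
\lambda_k=\frac{1}{m_n}\sum_{y=1}^{m_n}\cos(2\pi ky/n),
\]
and by the symmetry $\lambda_k=\lambda_{-k}$ it suffices to estimate $1-\lambda_k$ for $k\in\{1,\dots,\lfloor n/2\rfloor\}$. Writing $\phi_k=\pi k/n\in(0,\pi/2]$ and using $1-\cos=2\sin^2(\cdot/2)$, the problem reduces to proving
\[
n(1-\lambda_k)=\frac{2n}{m_n}\sum_{y=1}^{m_n}\sin^2(y\phi_k)\longrightarrow\infty\quad\text{uniformly in }k\neq 0.
\]

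I would establish two complementary lower bounds on this quantity. For the first, valid for all $k$, apply the closed-form identity $\sum_{y=1}^{m_n}\cos(2y\phi_k)=\sin(m_n\phi_k)\cos((m_n+1)\phi_k)/\sin\phi_k$ together with $\sin\phi_k\ge 2\phi_k/\pi$ to obtain
\[
n(1-\lambda_k)\ \ge\ n-\frac{n^2}{2km_n}.\qquad(\star)
\]
For the second, useful when $k$ is small, use $\sin(y\phi_k)\ge 2y\phi_k/\pi$ whenever $y\phi_k\le\pi/2$; with $M_k=\min(m_n,\lfloor n/(2k)\rfloor)$ this gives
\[
n(1-\lambda_k)\ \ge\ \frac{8k^2 M_k^3}{3nm_n}.\qquad(\star\star)
\]

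To finish, I split $\{1,\dots,\lfloor n/2\rfloor\}$ into three zones. When $km_n\le n/2$, $(\star\star)$ with $M_k=m_n$ gives $n(1-\lambda_k)\ge 8k^2m_n^2/(3n)\ge 8m_n^2/(3n)$, which tends to infinity by the hypothesis $m_n/\sqrt n\to\infty$. When $k>n/4$, $\sin\phi_k\ge\sqrt 2/2$, so $(\star)$ improves to $n(1-\lambda_k)\ge n-\sqrt 2\, n/m_n\to\infty$. The intermediate range $n/2<km_n$ with $k\le n/4$ is the only delicate part: neither $(\star)$ nor $(\star\star)$ alone tends to infinity there, but in this zone $M_k\ge n/(4k)$, so $(\star\star)$ yields $n^2/(24km_n)$, and the two lower bounds cross at $km_n=13n/24$, where each equals $n/13$. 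Taking the maximum of the two bounds over this range therefore gives $n(1-\lambda_k)\ge n/13$, which suffices. Combining the three cases yields $n\gap_n\to\infty$.

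The main obstacle is that no single elementary estimate controls all of $k$: the Taylor-type bound $(\star\star)$ is sharp for $k\ll n/m_n$ while the trivial bound $(\star)$ is sharp for $k\gg n/m_n$, and one must check that they overlap comfortably enough in the intermediate zone, which is where the worst $k$ is located. The hypothesis $m_n/\sqrt n\to\infty$ enters essentially only in the small-$k$ case, via the baseline $k=1$ contribution of order $m_n^2/n$.
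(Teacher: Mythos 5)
Your proof is correct, and it takes a genuinely different route from the paper. The paper does not touch the eigenvalues directly: it invokes the Cheeger-type inequality $\gap \ge \tfrac12 \Phi_*(q)^2$, computes the bottleneck ratio exactly by arguing that among sets of fixed cardinality the ``intervals'' $I_k$ minimize $|\partial S|$, finds $\Phi_*\big(q^{(n)}\big)=\Phi\big(I_{\lfloor n/2\rfloor}\big)=\tfrac{m_n+1}{2\lfloor n/2\rfloor}$, and concludes $n\gap_n\gtrsim m_n^2/n\to\infty$. You instead diagonalize $p^{(n)}$ exactly via the characters of $\bZ/n\bZ$ and control $1-\lambda_k$ uniformly in $k$ by combining a Taylor-type bound for small $k$ with the closed-form Dirichlet-kernel bound for large $k$; I checked the identity for $\lambda_k$, the two estimates $(\star)$ and $(\star\star)$, the coverage of the three zones, and the crossing computation at $km_n=13n/24$, and all are sound. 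The trade-off: the paper's argument is shorter but leans on the quoted Cheeger bound and on an extremal claim about $|\partial S|$ that is only sketched, and it yields only $n\gap_n\gtrsim m_n^2/n$; your argument is longer and needs the three-regime case analysis, but it is entirely elementary and self-contained, it produces the sharper conclusion $n\gap_n\ge\min\!\big(c\,m_n^2/n,\,n/13\big)$, and (via the $k=1$ eigenvalue, for which $n(1-\lambda_1)\asymp m_n^2/n$ when $m_n=o(n)$) it shows that the hypothesis $m_n/\sqrt n\to\infty$ is essentially necessary for the conclusion, which the bottleneck computation also suggests but only as a lower bound. One small quibble: the $k$ minimizing $1-\lambda_k$ is $k=1$ (zone 1), not the intermediate zone; what is true is that the intermediate zone is where neither single estimate suffices, which is how I read your remark.
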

Assuming Proposition~\ref{prop:imm2} for now, we may write the the second condition in
(ii) of \eqref{thm:main2} in the form
\[
\dfrac{\log(e\vee \tmeetn\pinmax) }{\gap_n\tmeetn}
=
\dfrac{\log[e\vee (\tmeetn/n)] }{(n\gap_n)(\tmeetn/n)} .
\]
By the meeting time bound \eqref{eq:meetlowerbound2},
$\tmeetn/n$ is bounded away from 0, and thus
\eqref{eq:gapgoal} implies that the right-hand side above
tends to 0.  That is, condition (ii) of
Thereom~\ref{thm:main2} holds and we obtain convergence of
voter model densities to the Wright-Fisher diffusion.

For the proof of Proposition~\ref{prop:imm2}, we recall the definition of the bottleneck 
ratio $\Phi_*$ here.
For a reversible Markov chain $(q,E)$ with $q=p-I$ for a probability matrix $p$ with zero diagonal, define 
\begin{equation}\label{eq:Phi}
\Phi(S) = \dfrac{\sum_{x\in S,y\in S^\complement}
\pi(x)q(x,y)}{\pi(S)}, \ S \subset E, 
\end{equation}
and 
\begin{equation}\label{eq:cheegerconst}
\Phi_*(q) = \min\left\{\Phi(S); S\subset E, 
\pi(S)\le \frac12\right\} .
\end{equation}
The inequality we need is
\begin{equation}\label{eq:cheegerbound}
\gap \ge \frac12 \big(\Phi_*(q)\big)^2.
\end{equation}
See Section~13.3.2 in \cite{LPW} for this inequality, and note that $\gap$ is equal to $1-\lambda$ for $\lambda$ being the second largest eigenvalue of $p$.

\begin{proof}[Proof of Proposition~\ref{prop:imm2}]
It is easy to see from the definition (\ref{eq:Phi}) that 
\begin{align}\label{eq:PhiS}
\Phi(S) = \dfrac{|\partial S|}{2m_n|S|},
\end{align}
where $\partial
S = \{(x,y);x\in S, y\in S^\complement, 1\le |x-y|\le m_n
\}$.
For $1\le k\le
n/2$, let $I_k$ be
an ``interval'' of $k$ 
elements in $E_n$:
\[
I_k=\{0,1,\dots,k-1\}. 
\]
A little thought shows that the
minimum of $|\partial S|$ among all $S$ with $|S|=k$ is
obtained by taking $S=I_k$, which implies that
\[
\Phi_*=\min\{\Phi(I_k); 1\le k\le \lfloor n/2\rfloor \}. 
\]
It is easy
to check that if  $m_n<k\le n/2$, then  
\[
|\partial I_k| = 2\sum_{j=1}^{m_n} j = m_n(m_n+1).
\]
Similarly, if $1\le k\le m_n$, then $|\partial I_k|$ is
\[
2\sum_{j=m_n-k+1}^{m_n}j = m_n(m_n+1)-(m_n-k)(m_n-k+1) =
k(2m_n-k+1).
\]
It follows from (\ref{eq:PhiS}) that
\[
\Phi(I_k) = \begin{cases}
\frac{m_n+1}{2k} &\text{if }m_n<k\le n/2,\\
\frac{2m_n-k+1}{2m_n} &\text{if }1\le k\le m_n.
\end{cases}
\]
Taking $k=n/2$, we see that 
\[
\Phi_*\big(q^{(n)}\big)=\Phi\left(I_{\lfloor n/2\rfloor }\right) = \frac{m_n+1}{2\lfloor n/2\rfloor }.
\]
It is now immediate from \eqref{eq:mn1d} and the inequality
\eqref{eq:cheegerbound} that 
\[
\lim_{n\to\infty}n \gap_n \ge \lim_{n\to\infty}\frac{n}{2} \cdot \frac{m_n^2}{4\lfloor n/2\rfloor^2}= \infty,
\]
which completes the proof. 
\end{proof}

Although we will not prove it here, 
the condition \eqref{eq:mn1d} implies that $\tmeetn=O(n)$, which
means that the chain considered in Proposition~\ref{prop:imm2} is transient.

\subsection{Random walk on simple graphs}
We consider in this section graphs which are simple, that is have no loops or
multiple edges, and are connected. The simple
random walk on such a graph $G=({\sf V},{\sf E})$ with vertex set ${\sf V}$ and edge set ${\sf E}$ is the Markov chain $(q,{\sf V})$ with
$q(x,y)=1/\deg(x)$ if $(x,y)$ is an edge for $x\neq y$. Note that  $q$ is
reversible with stationary distribution
\[
\pi(x)=\frac{\deg(x)}{2|{\sf E}|}. 
\]
See \cite{L:RWG} for a survey and the standard terminology of random
walks on graphs.

\subsubsection{Hypercubes}
For $n\in\bN$, take ${\sf V}_n=\{0,1\}^n$ and for $x,y\in {\sf V}_n$ let
$|x-y|=\sum_{j=1}^n|x_i-y_i|$. We draw an edge between any $x,y\in
{\sf V}_n$ with $|x-y|=1$, and obtain the the $n$-dimensional
hypercube, a connected $n$-regular graph. The random walk
$Q$-matrix $q^{(n)}$ on this graph is given by $q^{(n)}(x,y)=1/n$ 
if $|x-y|=1$, and is  irreducible and
symmetric with $\pi(x)\equiv 2^{-n}$. Furthermore, it is known (see
Example~5.15 in \cite{AF:MC}) that  
\[
{\mathbf g}_n=\left(\frac{n}{2}\right)^{-1}\quad\mbox{and}\quad
\tmeet^{(n)}\sim 2^{n-1}\;\;\mbox{as $n\lra\infty$} .
\]
It is easy to see from these facts that (ii) of 
Theorem~\ref{thm:main2} is satisfied.

\subsubsection{Expander graphs} Fix $\alpha\in(0,\infty)$ and
$k\in\Bbb N$ with $k\ge 3$, and take a $(k,\alpha)$-expander family of graphs $(G_n)$ with corresponding random walk
$Q$-matrices $q^{(n)}$. Here as in Section 13.6 of
\cite{LPW}, $(G_n)$ is a graph sequence such that the
number of vertices of $G_n$ tends to infinity, each $G_n$ is
connected and $k$-regular, and satisfies
\[
\Phi_*(q^{(n)}) \geq
\alpha,\quad \forall\;n\in \Bbb N 
\]
(see (\ref{eq:cheegerconst}) for notation).
By \eqref{eq:cheegerbound}, $\liminf \gap_n\ge
\frac12\alpha^2$, and thus the conditions of Corollary~\ref{cor:main} apply.

\subsection{Random walk on general graphs}
We now consider finite graphs without the simplicity condition, nor the connectivity condition.
For such a graph $G$ with vertex set ${\sf V}$, its edge set $\sf E$ is now defined by using an adjacency matrix
\[
A:{\sf V}\times {\sf V}\lra\Bbb Z_+
\]
with $A(x,x)\in \{0,1,2\}$,
so that $A(x,y)$ gives the
number of edges joining $x$ and $y$.
For $x\neq y$, $A(x,y)$ simply gives the number of edges between $x$ and $y$.
In Section~\ref{sec:rgt} below, we consider 
several models of random graphs due to Friedman \cite{F:RG} in which the convention is that $A(x,x)=1$ means a ``half-loop'' at $x$, and
$A(x,x)=2$ means a ``whole-loop'' at $x$.

If we take a sequence $(G_n)$ of such general graphs with $G_n=({\sf V}_n,{\sf E}_n)$ and ${\sf E}_n$ being encoded by $A_n$, then
the $q^{(n)}$-random walk on $G_n$ has $Q$-matrix defined by
\[
q^{(n)}(x,y) = \dfrac{A_n(x,y)}{A_n(x)},\quad x\neq y,
\]
where $A_n(x)=\sum_{y\in {\sf V}_n} A_n(x,y)$. Hence, $q^{(n)}=p^{(n)}-I$, where the $x$-th row of $p^{(n)}$ is obtained by dividing the $x$-th row of $A_n$ by $A_n(x)$. In this case, the second largest eigenvalue of the transition matrix $p^{(n)}$ is different from $1$ if and only if
the graph $G_n$ is connected, and so
the second smallest eigenvalue of $-q^{(n)}$ is different from $0$ if and only if
the graph $G_n$ is connected. 
 
\subsubsection{Random regular graphs}\label{sec:rgt}
The work \cite{F:RG} considers various models of growing
random $k$-regular graphs $(G_n)$ on $n$ vertices (see  the models $\mathcal G_{n,k}$, $\mathcal H_{n,k}$, $\mathcal I_{n,k}$, and $\mathcal J_{n,k}$ there), and each is defined for a large set of admissible degrees $k$. For simplicity, we only consider the model $\mathcal G_{n,k}$ below, although the following discussion applies to other models $\mathcal H_{n,k}$, $\mathcal I_{n,k}$, and $\mathcal J_{n,k}$  in \cite{F:RG} for moderately large admissible degrees $k$ as well.

The random regular graphs $\mathcal G_{n,k}$ are defined for even integers $k$ with $\mathsf V_n=\{1,\cdots,n\}$, and for each $n$ the edge set is given by
\[
{\sf E}_n = \left\{ \big(x,\rho_j(x)\big),\big(x,\rho^{-1}_j(x)\big);
  j=1,\dots,k/2, x\in {\sf V}_n 
\right\},
\]
where $\rho_1,\dots,\rho_{k/2}$ are
i.i.d. permutations of $\{1,\cdots,n\}$ and each $\rho_i$ is chosen uniformly
from the set of $n!$ permutations. Then for any even integer $k$, we have
\[
\lim_{n\to\infty}\P\left(\max_{2\leq i\leq n}|\lambda_i(\mathcal G_{n,k})|
\leq \frac{2\sqrt{k-1}}{k}+\vep\right)=1,\quad \forall\; \vep>0,
\]
where 
\[
1=\lambda_1(\mathcal G_{n,k})\geq \lambda_2(\mathcal G_{n,k})\geq \cdots \geq \lambda_{n}(\mathcal G_{n,k})
\]
are the ordered eigenvalues associated with the normalized adjacency matrix $p^{(n)}$ on $\mathcal G_{n,k}$. More precisely, we have
\begin{align}\label{ineq:RG}
\P\left(\max_{2\leq i\leq n}|\lambda_i(\mathcal G_{n,k})|
> \frac{2\sqrt{k-1}}{k}+\vep\right)\leq \frac{c}{n^{\lceil (\sqrt{k-1}+1)/2\rceil-1}},
\end{align}
where $c$ is a constant. See Theorem 1.1 in \cite{F:RG}, and also \cite{LS:RG} for estimates of mixing times on other random regular graphs.

If we assume in addition that $\mathcal G_{n,k}$, for $n\in \Bbb N$, are independent random graphs, then
it follows from (\ref{ineq:RG}) and the Borel-Cantelli Lemma that for each even $k\geq 12$,
\[
\liminf_{n\to\infty}\gap(\mathcal G_{n,k})>0\quad\mbox{a.s.}
\]
Since the stationary distribution of $q^{(n)}$ is always uniform, 
the sequence of $q^{(n)}$-Markov chains
now satisfies the conditions of Corollary~\ref{cor:main} with probability one (with respect to the randomness of $q^{(n)}$).
We
obtain the convergence of voter model densities \eqref{eq:main1} along $(\mathcal G_{n,k})_{n\in \Bbb N}$ with probability one.

\end{document}